\documentclass{elsarticle}
\usepackage{fancyhdr}

\usepackage{amsmath}
\usepackage{mathtools}
\usepackage{amsfonts}
\usepackage{amsthm}
\usepackage{amssymb}
\usepackage{color}
\usepackage{dsfont}
\usepackage{geometry}
\newgeometry{tmargin=2.8cm, bmargin=3.5cm, lmargin=2.3cm, rmargin=2.3cm}

\usepackage{color}

\newtheorem{theo}{\bf Theorem}[section]
\newtheorem{coro}{\bf Corollary}[section]
\newtheorem{lem}{\bf Lemma}[section]
\newtheorem{rem}{\bf Remark}[section]
\newtheorem{defi}{\bf Definition}[section]
\newtheorem{ex}{\bf Example}[section]

\newtheorem{prop}{\bf Proposition}[section]
\newtheorem*{oq}{\bf Open question} 

\def\R{{\mathbb{R}}}

\def\r{{\mathbb{R}}}
\def\N{{\mathbb{N}}}
\def\rn{{\mathbb{R}^{N}}}
\def\rnt{{\mathbb{R}^{N+1}}}

\def\VTM{{V_T^M(\Omega)}}
\def\VTMi{{V_T^{M,\infty}(\Omega)}}
\newcommand{\sg}{{\mathrm{sgn}_0^+}}
\newcommand{\supp}{{\mathrm{supp}}}
\newcommand{\essinf}{{\mathrm{ess\,inf}}}
\newcommand{\va} {\vec{a}}

\newcommand{\tm} {\mu}

\newcommand{\gkbm} {(T_k(u))^\bullet_\mu}
\newcommand{\gkm} {(T_k(u))_\mu}

\def\rp{{[0,\infty )}}

\def\ve{{\varepsilon}}
\def\vr{{\varrho}}
\def\vt{{\vartheta^{\tau,r}}}

\def\s{{\sigma}}

\def\dm{{\underline{m}}}

\def\Ak{{{\cal A }_{k}}}
\def\OT{{{\Omega_T}}}
\def\iOT{{\int_{\OT}}}

\def\iO{{\int_{\Omega}}}

\def\Qd{{Q_j^\delta}}
\def\tQd{{\widetilde{Q}_j^\delta}}

\newcommand{\wt}{\widetilde}

\newcommand{\vp}{\varphi}

\newcommand{\pa}{\partial}

\newcommand{\dv}{\mathrm{div}}

\begin{document}

\begin{frontmatter}

\title{Renormalized solutions to parabolic equations in  time and space dependent\\ anisotropic {M}usielak-{O}rlicz spaces in absence of {L}avrentiev's phenomenon}

\author[1]{Iwona Chlebicka\corref{mycorrespondingauthor}}
\cortext[mycorrespondingauthor]{Corresponding author}
\ead{i.chlebicka@mimuw.edu.pl}
\author[2]{Piotr Gwiazda}
\ead{p.gwiazda@mimuw.edu.pl}
 \author[1]{Anna Zatorska--Goldstein\fnref{myfootnote}}
  \ead{azator@mimuw.edu.pl}

\fntext[myfootnote]{The research of I.C. is supported by NCN grant no. 2016/23/D/ST1/01072.  The research of P.G. has been supported by the NCN grant  no. 2014/13/B/ST1/03094. The research of A.Z.-G. has been supported by the NCN grant  no. 2012/05/E/ST1/03232. }

\address[1]{Institute of Applied Mathematics and Mechanics,
University of Warsaw, ul. Banacha 2, 02-097 Warsaw, Poland
}
\address[2]{Institute of  Mathematics, Polish Academy of Sciences, ul. \'{S}niadeckich 8, 00-656 Warsaw, Poland
}

\begin{abstract}
We provide existence and uniqueness of renomalized solutions to a general nonlinear parabolic equation with merely integrable data on a Lipschitz bounded domain in $\rn$. Namely we study
\begin{equation*}
\left\{\begin{array}{l }
\partial_t u-\dv A(t,x,\nabla u)= f(t,x) \in L^1(\Omega_T),\\
u(0,x)=u_0(x)\in L^1(\Omega).
\end{array}\right.
\end{equation*}
The growth of the monotone vector field $A$ is assumed to be controlled by a generalized nonhomogeneous and anisotropic $N$-function $M:[0,T)\times\Omega\times\rn\to\R_+\cup\{0\} $. 

Existence and uniqueness of renormalized solutions are proven in absence of~Lavrentiev's phenomenon.  The condition we impose to ensure approximation properties of the space is a certain type of balance of interplay between the behaviour of $M$ for large $|\xi|$ and small changes of time and space variables.  Its instances  are log-H\"older continuity of variable exponent (inhomogeneous in time and space) or optimal closeness condition for powers in double phase spaces  (changing in time).

The noticeable challenge of this paper is considering the problem in non-reflexive and inhomogeneous fully
anisotropic space that changes along time. New delicate approximation-in-time result is proven and applied in the construction of renormalized solutions.

\end{abstract}

\begin{keyword}  existence of solutions \sep the Musielak-Orlicz spaces \sep parabolic problems \sep renormalized solutions
\MSC[2010] 35K55 \sep  35A01
\end{keyword}

\end{frontmatter}


\section{Introduction}

The main result of the paper is existence and uniqueness of a renormalized solution to a general parabolic equation with merely integrable data in the spaces changing along time. Namely, we study  the problem
\[\left\{\begin{array}{ll}
\partial_t u-\dv A(t,x,\nabla u)= f(t,x) & \ \mathrm{ in}\  \OT=(0,T)\times \Omega,\\
u(t,x)=0 &\ \mathrm{  on} \ \partial\Omega,\\
u(0,x)=u_0(x) & \ \mathrm{ in}\  \Omega,
\end{array}\right.
\]
where $[0,T]$ is a finite interval, $\Omega$ is a bounded Lipschitz domain in $ \rn$, $N>1$, $f\in L^1(\OT)$, $u_0\in L^1(\Omega)$.

The~modular function $M$ controlling the growth of the operator is assumed to be inhomogeneous, i.e. changing with the position in the time-space domain $\Omega_T \subset \r^{N+1}$, and fully anisotropic, i.e. $M=M(t,x,\nabla u)$ instead of~$M=M(t,x,|\nabla u|)$. 

\medskip

 To our best knowledge, existence to parabolic problems with data below duality in spaces changing with time is not addressed yet anywhere in the literature, even in the case of $L^{p(\cdot,\cdot)}$ -- the variable exponent space with the exponent depending on time and space. We solve it for every log-H\"older continuous exponent $p:[0,T]\times\Omega\to (1,\infty)$ separated from $1$ and $\infty$. Many other interesting examples  are given in~Examples~\ref{ex:many},~\ref{ex:many:p},~\ref{ex:Ordp} and~\ref{ex:weOr}.

\medskip

Typically studies in the setting involves growth conditions on $M$ or its conjugate $M^*$ e.g.~\cite{gwiazda-ren-ell,gwiazda-ren-para,hhk,mmos2013}. We stress out that we  \textbf{do not} impose any of such conditions,
nor~any particular restriction on the growth of $M$, apart from it being an $N$-function (i.e. convex, with superlinear growth), cf.~Definition~\ref{def:Nf}. Also, by considering anisotropic $M$, we allow different growth behaviour of $M$ in different directions.

In order to relax typical growth conditions we require the balance of the asymptotic behaviour of the modular function, i.e. we describe the interplay between the behaviour of $M$ for large $|\xi|$ and small changes of $t$ and $x$ -- the appropriate conditions (cf.~conditions ($\mathcal{M}$) or ($\mathcal{M}_p$)). These conditions take very intuitive forms in the isotropic setting -- see conditions ($\mathcal{M}^{iso}$) or ($\mathcal{M}_p^{iso}$) in the Theorem \ref{theo:main0} below. Their instances are log-H\"older continuity of variable exponent or optimal closeness condition for powers in double
phase spaces. The balance condition is needed only to ensure good approximation properties of the underlying function space and it can be skipped in the pure Orlicz (possibly fully anisotropic) case, i.e. when $M=M(\xi)$.

The problems similar to~\eqref{intro:para} with $A$ depending on $\nabla u$ only and with polynomial growth are very well understood. There are countless deep results concerning the corresponding problems involving the $p$-Laplace operator, $A(t,x,\xi)=|\xi|^{p-2}\xi$, stated in the Lebesgue space setting (the modular  function is then $M(t,x,\xi)=|\xi|^p$). There is a wide range of directions in~which the polynomial growth case has been developed, including the variable exponent, Orlicz, and double-phase spaces unified.  Survey~\cite{IC-pocket} describes how they can be unified  in the framework of Musielak-Orlicz spaces and used as a setting for differential equations.

The study of nonlinear boundary value problems in~non-reflexive Orlicz-Sobolev-type setting originated in the work of Donaldson~\cite{Donaldson} and Gossez~\cite{Gossez2,Gossez3,Gossez}. We refer to the paper of Mustonen and Tienari~\cite{Mustonen} for a~summary of~the~results. The~case of~vector Orlicz spaces with an anisotropic modular function, but independent of spacial or time variables, was investigated in~\cite{Gparabolic}. 

 The Musielak-Orlicz setting in full generality has been studied systematically starting from~\cite{Musielak,Sk1,Sk2} and developed inter alia around the theory arising from fluid mechanics~\cite{gwiazda-non-newt,gwiazda-tmna,gwiazda2,Aneta}.  For other recent developments of~the framework of the spaces let us refer e.g. to~\cite{yags,HPHPAK,hhk,hht,mmos:ap,mmos2013}. Typically the research concentrates, however, mostly on the $\Delta_2/\nabla_2$-case, or -- even if without structural conditions of $\Delta_2$--type (and thus done in nonreflexive spaces) --  when the modular function was trapped between some power-type functions  usually briefly described as $p,q$-growth. This direction comes from the fundamental papers~\cite{Marc1,Marc2} by Marcellini and despite it is well understood area it is still an active field especially from the point of view of modern calculus of variations and potential theory, see e.g.~\cite{ELM,HPHPAK,EMM,EMM2,bcm17,min-double-reg1,Baroni-Riesz,KuMi-Wolff-para}. 

 However, there is a vast range of~$N$-functions that do not satisfy the $\Delta_2$ condition, e.g.
\begin{itemize}
\item  $M(t,x,\xi)=a(t,x)\left( \exp(|\xi|)-1+|\xi|\right)$;
\item  $M(t,x,\xi)= a(t,x)|\xi_1|^{p_1(t,x)}\left(1+|\log|\xi||\right)+\exp(|\xi_2|^{p_2(t,x)})-1$, when $(\xi_1,\xi_2)\in\R^2$ and $p_i:\OT\to[1,\infty]$.
    This is a model example to imagine what we mean by an anisotropic modular function.
\end{itemize}
 
  Resigning from growth restrictions requires some density properties of the space, cf.~\cite{yags,pgisazg1,pgisazg2}, which we discuss in further parts of the paper. Particularly challenging is admitting not only space inhomogeneity, but also time dependence of the modular function. To our best knowledge this issue is raised only in~\cite{para-t-weak,ASGpara}, where existence is provided for bounded-data problems. 
  
  \medskip

Partial differential equations with data not in the dual space but only integrable received special attention. The cornerstone of the theory was the work of DiPerna and Lions~\cite{diperna-lions}, where they introduced the notion of~the~renormalized solution in the context of the~Boltzmann equation. Let us also refer to fundamental developments by Boccardo, Giachetti, Diaz, and Murat~\cite{boc-g-d-m} and Murat~\cite{murat}.  Other seminal idea for problems with $L^1$-data are SOLA (solutions is obtained as a limit of approximation) coming from Boccardo and Gallou\"et~\cite{bgSOLA,bgSOLA-cpde}.  Finally, {entropy solutions} are considered starting from papers by Benilan, Boccardo, Gallou\"et, Gariepy, Pierre, and Vazqu\'ez~\cite{bbggpv}, Boccardo, Gallou\"et, and Orsina~\cite{bgo}, and Dall'Aglio~\cite{dall}. Let us stress that there are cases when the mentioned notions coincide. Indeed, in~\cite{DrP} the equivalence between entropy and renormalized solutions for problems with polynomial growth is provided. Meanwhile, the corresponding result in the variable exponent and the Orlicz settings are provided together with the proofs of~the existence of~renormalized solutions in~\cite{ZZ,zhang17}, respectively.

In the parabolic setting, renormalized solutions were studied e.g. in~\cite{B,BMR,boc-ors1,DrP,P}, in the variable exponent setting~\cite{BWZ,LG,ZZ}. For very recent results on entropy and renormalised solutions, we refer also to~\cite{ch-o,F15,MMR17,zhang17}. Parabolic problems in non-reflexive Orlicz-Sobolev spaces are studied in this context in~\cite{HMR,MMR17,R10,zhang17}, while in the nonhomogeneous and non-reflexive Musielak-Orlicz spaces in~\cite{pgisazg2,gwiazda-ren-para}.   See~\cite{IC-pocket} for deeper considerations on the problems with data below duality in various instances of Musielak-Orlicz spaces.

 In all the results mentioned, except~\cite{pgisazg2}, the $\Delta_2$ condition on $M^*$ was imposed (this entails separability of~$L_{M^*}$, see~\cite{Aneta}). Thereby, this our paper can be treated as a follow-up of~\cite{pgisazg2} relaxing the balance condition therein and admitting time-dependence of the modular function. Let us repeat that to our best knowledge there are no results on existence of solutions to parabolic problems with data below duality stated in the space changing with time, even in weighted Sobolev spaces nor in variable exponent spaces (with weight or exponent depending also on the time variable). For a wide range of examples we  see Example~\ref{ex:many}.
 
 \medskip
 
 Let us present the main objectives.

\subsubsection*{The operator}

We consider $A$~belonging to an Orlicz class with respect to the last variable. Namely, we assume that function $A:[0,T]\times\Omega\times\rn\to\rn$  satisfies the following conditions.
\begin{enumerate}[($\mathcal{A}$1)]
\item \label{A1} $A$ is a Carath\'eodory's function, i.e. it is measurable w.r. to $(t,x)\in \OT$ and continuous w.r. to $\xi$;
\item \label{A2} \textbf{Growth and coercivity.} There exists an $N$-function  $M:[0,T]\times\Omega\times\rn\to\r$ and a constant $c_A>0$ such that for all $\xi\in\rn$ we have
\[ M(t,x,\xi)\leq A(t,x,\xi)\xi  \qquad\text{and}\qquad c_A M^*(t,x,A(t,x,\xi))\leq  M(t,x,\xi),\]
where $M^*$ is conjugate to $M$ (see Appendix A for the definitions).
\item \label{A3} \textbf{Weak monotonicity.} For all $\xi,\eta\in\rn$ and $x\in\Omega$ we have
\[(A(t,x,\xi) - A(t,x, \eta)) \cdot (\xi-\eta)\geq 0.\]
\end{enumerate}
In the fully anisotropic Musielak-Orlicz setting the choice of proper functional setting is not obvious. When gradient is considered in the anisotropic space, the function itself can be assumed to belong to various different isotropic spaces.  We choose the most intuitive classical Lebesgue's space. Thus, the framework we investigate involves the functional spaces
\[
\begin{split} \VTM  &=\{u\in L^1(0,T;W_0^{1,1}(\Omega)):\ \nabla u\in L_M(\OT;\rn)\},\\
 \VTMi 
 & =\VTM \cap L^\infty(0,T; L^1(\Omega)).
\end{split}
\]
For the definition of Musielak-Orlicz space $L_M$  generated by an $N$-function $M:[0,T]\times\Omega\times\rn\to\R$ we refer the reader to Section \ref{ssec:mo spaces}. Let us the concisely summarize features and difficulties of the framework of the Musielak-Orlicz spaces we consider. They are reflexive, provided $M,M^*\in\Delta_2$ close to infinity. Indeed, the variable exponent Lebesgue spaces, as well as~weighted or the double phase space are reflexive even if the weight $a$ or the exponents are only assumed to be separated from $1$, bounded, and measurable. Let us recall that various analytical difficulties are expected, when the modular function has growth far from polynomial. 

\subsection*{Absence of Lavrentiev's phenomenon}
If the growth of the modular function is arbitrary, in general the space is not reflexive, weak and weak-$*$-topology do not coincide, and  in fact yet another topology becomes to be relevant. In the case of the isotropic Orlicz spaces, according to Gossez~\cite{Gossez}, weak derivatives in the Orlicz-Sobolev spaces are strong derivatives with respect to the modular topology. Here we deal with not only with anisotropy, but also an additional difficulty resulting from inhomogeneity, namely the so-called Lavrentiev phenomenon. It occurs, when the infimum of a variational functional taken over space of smooth functions is strictly larger than the infimum over the (larger) space of functions, on which the functional is defined, see~\cite{LM}. The notion of the Lavrentiev phenomenon became naturally generalised to the situation, where functions from a certain space cannot be approximated by regular ones. This can occur in variable exponent spaces~\cite{ZV},  in the double-phase space~\cite{min-double-reg1}, as well as in the case linking them~\cite{bcm-st}.   Let us stress that kind of the Meyers-Serrin theorem, saying that weak derivatives are strong ones with respect to the modular topology, in the Musielak-Orlicz spaces holds only in absence of~Lavrentiev's phenomenon and this is the scope we work in. 

\medskip

To solve this problem the approximation of the gradients in the modular topology was proven in~\cite[Section~3]{para-t-weak} under the conditions below. 

\medskip

In the fully anisotropic case we shall consider the modular functions satisfying a balance condition.

\begin{enumerate}
\item[($\mathcal{M}$)] \label{M}  Suppose that there exists a function  $\Theta :[0,1]^2\to\rp$ nondecreasing with respect to each of the variables, such that
\begin{equation}
\label{ass:M:vp}  \limsup_{\delta\to 0^+} \Theta (\delta,\delta^{-N})<\infty ,\end{equation} which express the relation between $M(t,x,\xi )$ and \begin{equation}
\label{MIQ}M_{I,Q}(\xi ):= {\essinf}_{\substack{t\in  {I\cap[0,T]},\\ x\in Q\cap\Omega}}M(t,x,\xi ).
\end{equation} We assume that there exist $\xi_0\in\rn$ and $\delta_0>0$, such that for every interval $I\subset\r$, such that $|I|<\delta<\delta_0,$ and every cube $Q\subset \rn$ with ${\rm diam}\, Q<4\delta\sqrt{N}$
\begin{equation}
\label{ass:M:reg:IQ}
\frac{M(t,x,\xi )}{(M_{I,Q})^{**}(\xi)}
\leq \Theta  \left(\delta,  |\xi| \right)\quad\text{for a.e. }t\in I,\ \ \text{a.e. } x\in Q\cap\Omega,\ \text{ and all }\ \xi\in\rn:\ |\xi|>|\xi_{0}|,
\end{equation}
where by $(M_{I,Q})^{**}(\xi)=((M_{I,Q})^*(\xi))^* $, we denote the greatest convex minorant of the infimum from~\eqref{MIQ} (coinciding with the~second conjugate function, cf. Definition~\ref{def:conj}).
\end{enumerate}

\noindent When the modular function has at least power-type growth, we relax ($\mathcal{M}$) as follows.

\begin{enumerate}[($\mathcal{M}_p$)]
\item \label{Mp} Suppose for $\xi\in\rn$, such that $|\xi|>|\xi_p|$ \begin{equation}
\label{M>p}
M(t,x, \xi )\geq c_{gr}|\xi|^p \qquad\text{with }\quad  1<p<N\text{ and }\  c_{gr}>0 
\end{equation}
and  that there exists a function   $\Theta_p :[0,1]^2\to\rp$ nondecreasing with respect to each of the variables, such that~\eqref{ass:M:reg:IQ} holds with $\Theta$ substituted by $\Theta_p$ satisfying
\begin{equation} \label{ass:M:reg-p} \limsup_{\delta\rightarrow0^+}
\Theta_p (\delta,\delta^{-\frac{N}{p}})<\infty .
\end{equation}
\end{enumerate}
We point out that these results are optimal within some special cases (variable exponent, double phase together with its borderline case). Wider range of examples is presented below (Examples~\ref{ex:many},~\ref{ex:many:p},~\ref{ex:Ordp} and~\ref{ex:weOr}).

The condition  ($\mathcal{M}$) (resp.~($\mathcal{M}_p$))  not only takes into account time-dependence, but it is more general (admits less space-variable-control) and  is much easier to understand than its corresponding condition \cite[(M)]{pgisazg2}. Moreover, it is applied only in~the~proofs of~approximation results. For the rest of the reasoning, it suffices that $M$ is an $N$-function. The condition  ($\mathcal{M}$) (resp.~($\mathcal{M}_p$))  is also sufficient for  the new delicate approximation-in-time result provided in Section~\ref{sec:approximation} and necessary in our construction of renormalized solutions. 

\subsection*{Renormalized solutions}
We recall the definition of a renormalized solution. For this we need to introduce the symmetric truncation defined as follows\begin{equation}T_k(f)(x)=\left\{\begin{array}{ll}f & |f|\leq k,\\
k\frac{f}{|f|}& |f|\geq k.
\end{array}\right. \label{Tk}
\end{equation}
We say that a function $u$ is a \textbf{renormalized solution} to the problem
\begin{equation}\label{intro:para}\left\{\begin{array}{ll}
\partial_t u-\dv A(t,x,\nabla u)= f(t,x) & \ \mathrm{ in}\  \OT=(0,T)\times \Omega,\\
u(t,x)=0 &\ \mathrm{  on} \ \partial\Omega,\\
u(0,x)=u_0(x) & \ \mathrm{ in}\  \Omega,
\end{array}\right.
\end{equation}
where $[0,T]$ is a finite interval, $\Omega$ is a bounded Lipschitz domain in $ \rn$, $N>1$, $f\in L^1(\OT)$, $u_0\in L^1(\Omega)$, if it satisfies the following conditions:
\begin{enumerate}[($\mathcal{R}$1)]
\item \label{R1} $u\in L^1(\OT)$ and for each $k>0$  \[T_k(u)\in \VTM  ,\qquad A(\cdot,\cdot,\nabla T_k(u))\in L_{M^*}(\OT;\rn).\]
\item \label{R2} For every $h\in C^1_0(\R)$ and all $\varphi\in \VTMi$, such that $\partial_t\vp\in L^\infty(\Omega_T)$ and $\vp(\cdot,x)$ has a compact support in $[0,T)$ for a.e. $x\in\Omega$, we have
\begin{equation*}
-\int_{\OT} \left(\int_{u_0(x)}^{u(t,x)}h(\sigma)d\sigma\right)  \partial_t \vp\ dx\,dt+\int_{\OT}A(t,x,\nabla u)\cdot\nabla(h(u)\vp) \,dx\,dt=\int_{\OT}f h(u)\vp \,dx\,dt.
\end{equation*}
\item \label{R3} $\displaystyle{\int_{ \{l<|u|<l+1\}}A(t,x,\nabla u)\cdot\nabla u\, dx\,dt\to 0}$ as $l\to\infty$.
\end{enumerate} 

\medskip

Our main result yields the existence of a unique renormalized solution to~\eqref{intro:para} in the \textbf{fully anisotropic} case. However, we would like to present first the more intuitive isotropic case, when $M$ is a radial function with respect to the gradient variable $\xi$, i.e. $M=M(t,x,|\xi|)$.

\begin{theo}[Isotropic case] \label{theo:main0}
Suppose $[0,T]$ is a finite interval, $\Omega$ is a bounded Lipschitz domain in $ \rn$, $f\in L^1(\OT)$, $u_0\in L^1(\Omega)$, function $A$ satisfies assumptions ($\mathcal{A}$\ref{A1})--($\mathcal{A}$\ref{A3}) with a locally integrable $N$-function~$M:[0,T]\times\Omega\times\r\to\r$. Assume further that at least one of the following assumptions holds:
\begin{enumerate}
\item[($\mathcal{M}^{iso}$)]  (arbitrary growth) there exists a function  $\Theta^{iso}:\rp^2\to\rp$ nondecreasing with respect to each of the variables, such that
    \begin{equation} \label{ass:M:Th:lim:iso}
    \limsup_{\delta\to 0^+} \Theta^{iso} (\delta, \delta^{-N})<\infty,
    \end{equation}
    and
    \begin{equation} \label{ass:M:reg:iso}
     \frac{M(t,x,s)}{M(s,y,s)}
    \leq \Theta^{iso} \left(|t-s|+c_{sp} |x-y | , s\right)
    \end{equation}

\item[($\mathcal{M}^{iso}_p$)] (at least power-type growth)
    there exists a function  $\Theta^{iso}_p:\rp^2\to\rp$, nondecreasing with respect to each of the variables, satisfying
    \begin{equation} \label{ass:M:Th:lim:iso:p}
    \limsup_{\delta\to 0^+} \Theta^{iso} (\delta, \delta^{-N/p})<\infty,
    \end{equation}   such that for all $s>s_p$
    \begin{equation} \left\{\begin{array}{l}M(t,x, s )\geq c_{gr}\,s^p \qquad\text{with }\  p>1\text{ and }\  c_{gr}>0,\\
    \label{ass:M:reg:iso:p}
    \frac{M(t,x,s)}{M(s,y,s)}
    \leq \Theta_p^{iso} \left(|t-s|+c_{sp} |x-y | , s\right).\end{array}\right.
    \end{equation} 
\end{enumerate}
Then there exists a unique renormalized  solution to the problem~\eqref{intro:para}, i.e. there exists $u$ satisfying ($\mathcal{R}$\ref{R1})--($\mathcal{R}$\ref{R3}).
\end{theo}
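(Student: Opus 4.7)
The plan is to obtain $u$ as a limit of weak solutions to regularized problems where both the data and the operator are smoothed. Concretely, I would fix sequences $f_n\in L^\infty(\OT)$ and $u_{0,n}\in L^\infty(\Omega)$ converging in $L^1$ to $f$ and $u_0$, respectively, and consider an operator $A_n$ obtained from $A$ by adding a small coercive correction $\tfrac{1}{n}|\xi|^{p_0-2}\xi$ with $p_0$ large enough to place the regularized problem in a classical, reflexive Sobolev setting (equivalently, one may use a Galerkin scheme on a countable dense subset of $\VTM$). Standard Leray--Lions theory then provides weak solutions $u_n$ with the appropriate time derivative regularity, and with $u_n(0)=u_{0,n}$.

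For the uniform estimates, testing with $T_k(u_n)$ and using ($\mathcal{A}$\ref{A2}) to bound $A\cdot\nabla T_k(u_n)\ge M(\cdot,\cdot,\nabla T_k(u_n))$, together with the estimate $c_A M^*(A)\le M(\nabla u)$, yields
\[
\iOT M(t,x,\nabla T_k(u_n))\,dx\,dt \leq C\,k,
\]
a uniform bound on $u_n$ in $L^\infty(0,T;L^1(\Omega))$, and boundedness of $A(\cdot,\cdot,\nabla T_k(u_n))$ in the modular of $L_{M^*}$. Classical Boccardo--Gallou\"et--Murat arguments produce $u_n\to u$ a.e.\ and in $L^1(\OT)$, $\nabla T_k(u_n)\rightharpoonup \nabla T_k(u)$ modularly in $L_M$ for each $k$ along a subsequence, and $A(\cdot,\cdot,\nabla T_k(u_n))\rightharpoonup \chi_k$ weakly-$*$ in $L_{M^*}$. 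Property ($\mathcal{R}$\ref{R1}) then follows by Fatou in the modular.

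The core of the argument is the identification $\chi_k=A(\cdot,\cdot,\nabla T_k(u))$, done by a Minty--Browder scheme adapted to the Musielak--Orlicz setting. For fixed $l>k$, I would test the difference of the equation for $u_n$ and the limit formulation against $h_l(u_n)\bigl(T_k(u_n)-(T_k(u))_\mu\bigr)$, where $h_l\in C^1_c(\R)$ equals $1$ on $[-l,l]$ and $(T_k(u))_\mu$ is a time regularization of $T_k(u)$ compatible with the modular topology of $L_M$. Letting $n\to\infty$, then $\mu\to 0$, and finally $l\to\infty$, the parabolic term produces a nonnegative boundary contribution and one arrives at
\[
\limsup_{n\to\infty}\iOT A(t,x,\nabla T_k(u_n))\cdot\nabla T_k(u_n)\,dx\,dt \,\leq\, \iOT \chi_k\cdot\nabla T_k(u)\,dx\,dt.
\]
Combined with the monotonicity ($\mathcal{A}$\ref{A3}), this upgrades to a.e.\ convergence of $\nabla T_k(u_n)$ by the truncation trick of Boccardo--Murat and identifies $\chi_k$. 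Property ($\mathcal{R}$\ref{R3}) follows from the resulting energy convergence applied on the level sets $\{l<|u_n|<l+1\}$ and passing $l\to\infty$. Uniqueness is shown by comparing two renormalized solutions $u,\tilde u$, testing the difference of the renormalized formulations by $h_l(u)h_l(\tilde u)T_k(u-\tilde u)$, discarding the diffusion term thanks to ($\mathcal{A}$\ref{A3}), and passing $l\to\infty$ with ($\mathcal{R}$\ref{R3}).

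The main obstacle is legitimizing $(T_k(u))_\mu$ as a test function in the modular topology of $L_M$ when $M$ is \emph{time-dependent}. Classical Landes--Mignot--Boccardo time mollifiers behave well only for $t$-independent $M$; here condition ($\mathcal{M}^{iso}$) (or ($\mathcal{M}^{iso}_p$)) is what allows one to control the change of the $N$-function under time convolution, via the new approximation-in-time result announced in Section~\ref{sec:approximation}. The same balance condition rules out Lavrentiev's phenomenon and yields the Meyers--Serrin density of smooth functions in $\VTM$ modularly; this density is needed both to justify $h_l(u_n)\vp$ with $\vp$ smooth as an admissible test and then to pass to a general $\vp\in\VTMi$ in ($\mathcal{R}$\ref{R2}). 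Once these two approximation ingredients are in place, the remaining chain of modular--$L^1$--a.e.--monotonicity passages follows the scheme above.
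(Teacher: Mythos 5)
First, note that the paper's own proof of this particular theorem is a two-line reduction: it observes that in the isotropic case ($\mathcal{M}^{iso}$) (resp.\ ($\mathcal{M}^{iso}_p$)) implies the anisotropic balance condition ($\mathcal{M}$) (resp.\ ($\mathcal{M}_p$)), and then invokes the fully anisotropic Theorem~\ref{theo:aniso}. Your proposal instead re-runs the whole existence machinery, so the fair comparison is against the paper's proof of Theorem~\ref{theo:aniso}; most of your outline (truncation estimates, weak-$*$ limits in $L_{M^*}$, the Minty--Browder identification with a time regularization of $T_k(u)$ made admissible by the balance condition, energy convergence for ($\mathcal{R}$3)) matches that proof in spirit, and you correctly isolate the time-dependence of $M$ as the obstacle requiring the new approximation-in-time result.

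There is, however, one genuine gap: the step ``classical Boccardo--Gallou\"et--Murat arguments produce $u_n\to u$ a.e.'' Those arguments rest on interpolating the truncation estimates into a uniform bound in $L^q(0,T;W^{1,q}_0(\Omega))$ for some $q>1$ and then extracting a.e.\ convergence via compact embeddings (Rellich--Kondrachov, Aubin--Lions). Under ($\mathcal{M}^{iso}$) the modular function is only assumed superlinear, the space $L_M$ is in general neither reflexive nor separable, and no lower power bound is available, so exactly these tools fail --- the paper points this out explicitly. The paper replaces this step by a comparison-principle argument: it solves the bounded-data problems with \emph{asymmetric} truncations $T^{k,l}$ of the data, uses the comparison principle to show the resulting family $u^{k,l}$ is monotone in $k$ and $l$, and obtains the a.e.\ limit by monotone convergence, identifying it with the weak limit from the truncation estimates. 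Without this (or some substitute for the missing compactness), your chain breaks before the Minty argument even starts, since the identification step already uses $u_n\to u$ a.e. A secondary, more repairable issue is the regularization $A_n=A+\tfrac1n|\xi|^{p_0-2}\xi$: you would then have to show that the perturbation's contribution vanishes in the limit inside a non-reflexive $L_{M^*}$-duality pairing, an extra complication the paper avoids by quoting an existence result for the bounded-data problem with the original operator $A$.
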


Our most general result reads as follows.

\begin{theo}[Fully anisotropic case]\label{theo:aniso}
Suppose $[0,T]$ is a finite interval, $\Omega$ is a bounded Lipschitz domain in $ \rn$,   $N>1$,  $f\in L^1(\OT)$, $u_0\in L^1(\Omega)$, function $A$ satisfy assumptions ($\mathcal{A}$\ref{A1})--($\mathcal{A}$\ref{A3}) with  a locally integrable $N$-function $N$-function~$M:[0,T]\times\Omega\times\rn\to\rp.$ Assume further that  $M$ satisfies the condition $(\mathcal{M})$ or $(\mathcal{M}_p)$. Then there exists a unique renormalized  solution to the problem~\eqref{intro:para}, i.e. there exists $u$, satisfying ($\mathcal{R}$\ref{R1})--($\mathcal{R}$\ref{R3}).
\end{theo}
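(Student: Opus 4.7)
The plan is the standard three-step approximation/estimate/identification scheme for renormalized solutions, adapted to the time-dependent fully anisotropic Musielak--Orlicz framework; the absence of Lavrentiev's phenomenon guaranteed by $(\mathcal{M})$ or $(\mathcal{M}_p)$ is the key structural ingredient. First I would truncate the data by $f_n = T_n(f)$ and $u_{0,n}\in C^\infty_c(\Omega)$ with $u_{0,n}\to u_0$ in $L^1(\Omega)$, and add a coercive perturbation $\tfrac{1}{n}|\xi|^{p_0-2}\xi$ to $A$ with some fixed large $p_0$ to put the approximate problem into a classical reflexive Sobolev setting. Galerkin or Browder--Minty monotone operator theory then produces weak solutions $u_n\in L^{p_0}(0,T;W^{1,p_0}_0(\Omega))\cap C([0,T];L^2(\Omega))$.

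Testing the approximate problem with $T_k(u_n)$ gives the basic energy bound
\begin{equation*}
\iOT M(t,x,\nabla T_k(u_n))\,dx\,dt + \tfrac{1}{2}\iO |T_k(u_n)(T,x)|^2\,dx \leq k\bigl(\|f\|_{L^1(\OT)} + \|u_0\|_{L^1(\Omega)}\bigr),
\end{equation*}
so by $(\mathcal{A}\ref{A2})$ the fields $A(t,x,\nabla T_k(u_n))$ are bounded in $L_{M^*}(\OT;\rn)$. A Boccardo--Gallou\"et truncation argument together with time-compactness extracted from the equation in a suitable dual space yields convergence in measure of $u_n$ to some $u$ with $T_k(u)\in\VTM$, $u_n\to u$ a.e. in $\OT$, and weak-$*$ convergence $\nabla T_k(u_n) \rightharpoonup \nabla T_k(u)$ in $L_M(\OT;\rn)$.

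The main obstacle is the identification of the weak-$*$ limit $\overline{A_k}$ of $A(t,x,\nabla T_k(u_n))$ with $A(t,x,\nabla T_k(u))$. I would apply Minty's monotonicity trick starting from
\begin{equation*}
\iOT \bigl(A(t,x,\nabla T_k(u_n)) - A(t,x,\eta)\bigr)\cdot\bigl(\nabla T_k(u_n) - \eta\bigr)\,dx\,dt \geq 0,
\end{equation*}
which requires the energy convergence $\iOT A(t,x,\nabla T_k(u_n))\cdot\nabla T_k(u_n)\,dx\,dt \to \iOT \overline{A_k}\cdot\nabla T_k(u)\,dx\,dt$. This rests on two density/approximation facts, both of which need $(\mathcal{M})$ or $(\mathcal{M}_p)$: the modular approximation of $\VTM$-functions by smooth fields in space (from \cite{para-t-weak}) and the new time-regularization result of Section~\ref{sec:approximation}, the latter replacing the Landes-type regularization which is unavailable since the underlying space is non-reflexive and varies with time. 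Monotonicity then gives $\overline{A_k} = A(t,x,\nabla T_k(u))$, and an additional argument upgrades weak-$*$ to modular convergence of the truncated gradients on $\{|u|<k\}$.

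Passing to the limit in the regularized equation tested against $h(u_n)\varphi$ for $h\in C^1_0(\R)$ then gives $(\mathcal{R}\ref{R2})$, the support of $h$ localizing the computation to bounded levels where the identification above applies. For $(\mathcal{R}\ref{R3})$ I would test with $T_1(u_n-T_l(u_n))$, handle the parabolic term via the same time-regularization, and send $n\to\infty$ then $l\to\infty$. Uniqueness follows in the standard way: for two renormalized solutions $u,v$, combine $(\mathcal{R}\ref{R2})$ for both with a cut-off $h_l$ and a test function built from $T_\sigma(u-v)$, drop the diffusion contribution by monotonicity $(\mathcal{A}\ref{A3})$, and kill the tail errors using $(\mathcal{R}\ref{R3})$ as $l\to\infty$, $\sigma\to 0$.
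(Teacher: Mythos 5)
Your overall skeleton (truncate the data, energy estimate on $T_k(u_n)$, monotonicity trick powered by the new time-regularization of Theorem~\ref{theo:approx-t}, then verification of ($\mathcal{R}$\ref{R1})--($\mathcal{R}$\ref{R3})) matches the paper's strategy, and you correctly identify the central novelty: the Landes regularization fails because $M$ depends on $t$, so the modular time-approximation is the indispensable tool in the identification of $\overline{A_k}$. However, there is one genuine gap. You obtain the a.e.\ convergence $u_n\to u$ from ``time-compactness extracted from the equation in a suitable dual space.'' This is precisely the classical route (Aubin--Lions/Simon, resting on Rellich--Kondrachov) that is \emph{not} available here: under ($\mathcal{M}$) the modular function need not dominate any power $|\xi|^q$ with $q>1$, so $\nabla T_k(u_n)$ is controlled only in $L^1$ and in the non-reflexive, non-separable space $L_M$, and no compact spatial embedding with a workable exponent is at hand. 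The paper circumvents this entirely: it introduces asymmetric truncations $T^{k,l}$ of the data, invokes the comparison principle (Theorem~\ref{prop:comp-princ}) to get monotonicity of the family $u^{k,l}$ in $k$ and $l$, and extracts the a.e.\ limit from monotone convergence, identifying it with $u$ by uniqueness of the weak limits of $T_k(u_n)$ already established. Without this (or some substitute for compactness in time), your argument does not close.

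Two smaller divergences are worth noting but are not fatal. First, for the bounded-data approximation you add a perturbation $\tfrac1n|\xi|^{p_0-2}\xi$ and run Galerkin/Browder--Minty; the paper instead quotes the existence result for bounded data from \cite{para-t-weak} (Theorem~\ref{theo:bound}). Your route would require checking that the perturbed operator satisfies ($\mathcal{A}$\ref{A1})--($\mathcal{A}$\ref{A3}) with the modified $N$-function $M+\tfrac1n|\xi|^{p_0}$, that the balance condition survives, and that the perturbation term vanishes in the limit --- doable but not free. Second, for uniqueness you sketch the doubling argument with $T_\sigma(u-v)$; the paper gets uniqueness in one line from the comparison principle, which it needs anyway for the a.e.\ convergence step. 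Finally, note that the paper passes from biting convergence to weak $L^1$ convergence of $A(t,x,\nabla T_k(u_n))\cdot\nabla T_k(u_n)$ via Young measures and the Chacon lemma in order to verify ($\mathcal{R}$\ref{R3}); your proposal is silent on how the weak $L^1$ convergence of this product is obtained.
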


Before a load of examples, we would like to compare this result with earlier results of the authors~\cite{pgisazg2}. The equation considered in~\cite{pgisazg2} is an analogue of~\eqref{intro:para}, but is posed in Musielak-Orlicz spaces equipped with time-independent modular function. Moreover, the balance conditions here have more general form and the retrieved approximation results hold not only under the log-H\"older condition in the variable exponent spaces, but also within the sharp range of parameters in the closeness condition in the double phase space.

Nonetheless, the construction of approximation needed in the proof is very delicate and we cannot cover here the reflexive case included in~\cite{pgisazg2} (in the space not changing with time). Therefore, we pose a question.

\begin{oq}$ $\\  Is it possible to prove Theorem~\ref{theo:aniso} under assumption $M, M^*\in\Delta^\infty_2$  instead of $(\mathcal{M})$ / $(\mathcal{M}_p)$? 
\end{oq}

Let us pass to wide range of examples within our setting.

\begin{coro}[Skipping ($\cal M$) / $(\mathcal{M}_p)$ -- Orlicz case]
In the pure Orlicz case, i.e. when \[M(t,x,\xi)=M(\xi),\] the balance conditions $(\mathcal{M})$ or $(\mathcal{M}_p)$ do  not carry any information and can be skipped. Therefore, as a direct consequence of the above theorem we get existence of renormalized solutions to parabolic problem~\eqref{eq:bound} in the anisotropic Orlicz space without growth restrictions. 
\end{coro}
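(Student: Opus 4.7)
The plan is to verify that, once $M(t,x,\xi)$ reduces to an $N$-function $M(\xi)$ of $\xi$ alone, the balance condition $(\mathcal{M})$ holds trivially with an essentially constant $\Theta$, whence Theorem~\ref{theo:aniso} applies verbatim and the conclusion follows. I would therefore first read off the definitions and then do a two-line check.

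More precisely, I would start by computing $M_{I,Q}$ as defined in~\eqref{MIQ} under the standing assumption $M(t,x,\xi)=M(\xi)$. Since the integrand in the essential infimum is independent of $(t,x)$, one immediately has
\[
M_{I,Q}(\xi)={\essinf}_{\substack{t\in I\cap[0,T],\\ x\in Q\cap\Omega}}M(\xi)=M(\xi)
\]
for every interval $I$ and every cube $Q$ with $(I\cap[0,T])\times(Q\cap\Omega)\neq\emptyset$. Since $M$ is an $N$-function (Definition~\ref{def:Nf}), it is convex and lower semicontinuous on $\rn$, hence coincides with its own second conjugate, so $(M_{I,Q})^{**}(\xi)=M^{**}(\xi)=M(\xi)$. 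Consequently the left-hand side of~\eqref{ass:M:reg:IQ} is identically equal to $1$ for all admissible $(t,x,\xi)$.

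With this observation in hand, I would simply set $\Theta\equiv 1$ on $[0,1]^2$. This $\Theta$ is (trivially) nondecreasing in each variable, and
\[
\limsup_{\delta\to 0^+}\Theta(\delta,\delta^{-N})=1<\infty,
\]
so condition~\eqref{ass:M:vp} is satisfied. Estimate~\eqref{ass:M:reg:IQ} then holds for every $\xi\in\rn$ with any choice of $\xi_0$ and $\delta_0$. Thus $(\mathcal{M})$ is fulfilled, and Theorem~\ref{theo:aniso} yields the existence and uniqueness of a renormalized solution to~\eqref{intro:para} in the fully anisotropic Orlicz setting with no growth restrictions on $M$ beyond being an $N$-function. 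I do not anticipate any obstacle here: the entire content of the corollary is the observation that, in the time- and space-homogeneous case, the biconjugate identity $M^{**}=M$ makes the balance condition vacuous; the functional-analytic and approximation machinery is already encapsulated in Theorem~\ref{theo:aniso}.
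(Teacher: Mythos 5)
Your verification is correct and is exactly the intended (unwritten) justification of the corollary: since $M$ is independent of $(t,x)$, one has $M_{I,Q}=M$ and, $M$ being convex and finite, $(M_{I,Q})^{**}=M^{**}=M$, so the quotient in~\eqref{ass:M:reg:IQ} is identically $1$ and $\Theta\equiv 1$ satisfies~\eqref{ass:M:vp}. The paper treats this as immediate and offers no further argument, so your two-line check matches its approach.
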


Theorem~\ref{theo:bound} provides existence results in the following cases when $M$ satisfies ($\cal M$) or $(\mathcal{M}_p)$.
\begin{ex}[Classical problems under condition ($\cal M$)]\label{ex:many}
We cover in particular the following problems.
\begin{itemize}
\item  When  $M=|\xi|^{p}$ with  $1<p <\infty$, in classical Sobolev spaces ($\nabla u\in L^1(0,T;W^{1,p}_0(\Omega)$) for $p$-Laplace problem $\partial_t u -\Delta_p u = f$, we study
\[\partial_t u -\dv \big (b(t,x)|\nabla u|^{p-2}\nabla u \big)= f(t,x)\]
{with} bounded $b:\OT\to\rp$ such that $0<<b<<\infty.$
\item  When  $M=|\xi|\log^\alpha (1+|\xi|)$ in $L\log^\alpha L$ spaces  with $\alpha> 0$  we study
\[\partial_t u -\dv \Big(b(t,x)\frac{\log^\alpha({\rm e}+|\nabla u|)}{|\nabla u|}\nabla u \Big)= f(t,x)\]
{with}  bounded $b:\OT\to\rp$ such that $0<<b<<\infty.$
\item When $M=|\xi|^{p(t,x)}$ in variable exponent spaces with log-H\"older $p:\OT\to (1,\infty)$ such that $1<<p <<\infty$   we study
\[\partial_t u -\dv \big (b(t,x)|\nabla u|^{p(t,x)-2}\nabla u )= f(t,x) \]
{with} bounded $b:\OT\to\rp$ such that $0<<b<<\infty.$
\item When $M=|\xi|^p+a(t,x)|\xi|^p\log({\rm e}+|\xi|)$ in double phase spaces with mild transition, with $1<p<\infty$ and with a log-H\"older weight $a:\OT\to (1,\infty)$ and possibly touching zero; we study
\[\partial_t u -\dv \Big (b(t,x)\big(1 +a(t,x)\log({\rm e}+ |\nabla u|) \big)|\nabla u|^{p-2}\nabla u \Big)= f(t,x)\]
{where} $b:\OT\to\rp$ is bounded and such that $0<<b<<\infty.$ 

See~\cite{bcm-st} for the explanation in what sense this space  is the bordeline case between the variable exponent spaces and double-phase spaces (covered sharply under (${\cal M}_p$)).
\end{itemize}
\end{ex} 
\begin{ex}[Problems under condition (${\cal M}_p$)]\label{ex:many:p}
We cover in particular the following problems. 
\begin{itemize}
\item When $M=|\xi|^p+a(t,x)|\xi|^q$ in double phase spaces, with $1<p,q<\infty$ and a function $a:\OT\to[0,\infty)$ being such that $a\in C^{0,\alpha}(\OT)$ and possibly touching zero; we study
\[\partial_t u -\dv \Big (b(t,x)\big(|\nabla u|^{p-2}\nabla u +a(t,x) (|\nabla u|^{q-2}\nabla u\big)\Big)= f(t,x)\qquad\text{if}\qquad \frac{q}{p}\leq 1+\frac{\alpha}{N}\]
{where} $b:\OT\to\rp$ is  bounded  and such that $0<<b<<\infty.$

Note that the range of parameters is sharp for absence of Lavrentiev's phenomenon due to~\cite{min-double-reg1}.
\item  When $M(t,x,\xi)=|\xi|^{p(t,x)}+a(t,x)|\xi|^{q(t,x)}$  in variable exponent double-phase spaces  with log-H\"older $p,q:\OT\to (1,\infty)$ such that  $1<p_-<p(t,x)<q(t,x)<<\infty$ and a function $a:\OT\to[0,\infty)$ being such that $a\in C^{0,\alpha}(\OT)$ and possibly touching zero; we study
\[\partial_t u -\dv \Big (b(t,x)\big(|\nabla u|^{p(t,x)-2}\nabla u +a(t,x) (|\nabla u|^{q(t,x)-2}\nabla u\big)\Big)= f(t,x)\quad\text{if}\quad \sup_{(t,x)\in\OT}\big(q(t,x)-p(t,x)\big)\leq \frac{\alpha p_-}{N} \] 
{where} $b:\OT\to\rp$ is bounded  and such that $0<<b<<\infty.$
\end{itemize}
\end{ex} 

\noindent When the growth of $M$ is far from polynomial, the meaning of the balance condition can be illustrated by the following examples.

\begin{ex}[Orlicz double phase space without growth restrictions]\label{ex:Ordp} When $M(t,x,\xi)=M_1(\xi)+a(t,x)M_2(\xi)$, where $M_1,M_2$ are (possibly anisotropic) homogeneous $N$-functions without prescribed growth  such that $M_1(\xi)\leq M_2(\xi)$ for $\xi:$ $|\xi|>|\xi_0|$, and moreover the~function $a:\OT\to\rp$ is bounded and has a modulus of continuity denoted by $\omega_a$, we infer existence and uniqueness for solution to the problem
\[\partial_t u -\dv \left(b(t,x)\Big(\frac{M_1(\nabla u)}{|\nabla u|^2}\cdot\nabla u +a(t,x) \frac{M_2(\nabla u)}{|\nabla u|^2}\cdot\nabla u\Big)\right)= f(t,x)\in L^1(\Omega_T)\qquad\text{with}\qquad 0<<b<<\infty,\] 
provided \[\limsup_{\delta\to 0} \omega_a(\delta)\frac{\overline{M}_2(\delta^{-N})}{\underline{M}_1(\delta^{-N})}<\infty,\]
where $\underline{M}_1(s):=\inf_{\xi:\,|\xi|=s}{M}_1(\xi)$ and $\overline{M}_2(s):=\sup_{\xi:\,|\xi|=s}{M}_2(\xi)$,
or -- when $M_1$ has at least power growth -- provided\[\limsup_{\delta\to 0} \omega_a(\delta)\frac{\overline{M}_2(\delta^{-N/p})}{\underline{M}_1(\delta^{-N/p})}<\infty.\]
\end{ex}

 \begin{ex}[Weighted Orlicz spaces without growth restrictions]\label{ex:weOr}
 \rm If $M$ has a form 
\[M(t,x,\xi)=\sum_{i=1}^jk_i(t,x)M_i(\xi)+M_0(t,x,|\xi|),\quad j\in\N,\]
instead of ($\mathcal{M}$) we assume only that $M_0$ satisfies~($\mathcal{M}^{iso}$), all $M_i$ for $i=1,\dots,j$ are $N$-functions and all $k_i$ are positive and satisfy $\frac{k_i(t,x)}{k_i(s,y)}\leq C_i \Theta_i(|t-s|+c_{sp} |x-y | )$ with $C_i>0$ and $\Theta_i:\rp\to\rp$ and $\Theta_i\in L^\infty$ for $i=1,\dots,j$. Then, according to computations in Appendix, we get that $M$ satifies ($\mathcal{M}$)  when we take  \[\Theta(r, s)=\sum_{j=1}^k \Theta_j(r)+\Theta_0(r,s)\qquad\text{with}\qquad \limsup_{\delta\to 0^+}\Theta(\delta,\delta^{-N})<\infty\] In the case of~($\mathcal{M}_p$) we expect $\limsup_{\delta\to 0^+}\Theta(\delta,\delta^{-N/p})<\infty$.
\end{ex}

\subsection*{The methods }

We use the framework developed in~\cite{pgisazg1,pgisazg2,gwiazda-ren-ell,gwiazda-ren-cor,gwiazda-ren-para}, where elliptic and parabolic problems in the Musielak--Orlicz spaces were studied, and apply the results of~\cite{para-t-weak}.
Since in general $M^*\not\in\Delta_2$, the understanding of~the dual pairing is not intuitive. Indeed, $A(\cdot,\cdot,\nabla(T_k(u)))$ and $\nabla(T_k(u))$ do not belong to the dual spaces. Relaxing growth condition on the modular function restricts the admissible classical tools, such as the Sobolev embeddings,~the~Rellich-Kondrachov compact embeddings, or~Aubin-Lions Lemma (applied in~\cite{gwiazda-ren-para} to~prove almost everywhere convergence). The proof of the existence of a renormalized solution involves the classical truncation ideas, the Young measures methods and monotonicity arguments. Uniqueness results from the comparison principle.

The scheme follows the ideas of~\cite{pgisazg2}. First, we establish certain types of convergence of truncations of~solutions $T_k(u_n)$ (Proposition~\ref{prop:convTk}). Then, the radiation control condition ($\mathcal{R}$3)  for $u_n$ is provided (Proposition~\ref{prop:contr:rad:n}). Next, we apply the comparison principle to obtain almost everywhere convergence of $u_n$. We identify $A(t,x,\nabla T_k(u))$ as the weak-* limit in $L_{M^*}$ of~$A(t,x,\nabla T_k(u_n))$ (Proposition~\ref{prop:convTkII}). Finally we conclude the proof of~existence of~renormalized solutions. Weak $L^1$-convergence of $A(t,x,\nabla T_k(u_n))\cdot \nabla T_k(u_n)$ is obtained using the Young measures.

Since the modular function is time-dependent, the identification of limits of approximate sequences is highly non-trivial. The space we deal with is, in~general, neither separable, nor reflexive. The lack of~precise control on the growth of~$A$ together with the merely integrable right-hand side cause noticeable difficulties in studies on convergence of approximation. The construction of our renormalized solutions holds in~the~absence of~Lavrentiev's phenomenon, i.e. when the functions from the relevant space can be approximated by smooth ones.  The critical place, where this paper differs from~\cite{pgisazg2}, is that time-dependence of the modular function essentially complicates the construction of time-approximation, since the Landes regularization previously used in the corresponding study stops to converge modularly.    Thus, as a tool we need to provide new results on approximation having by far more delicate properties, see  Theorem~\ref{theo:approx-t}. Careful merging the ideas of Landes on the splitted time-interval combined with analysis of concentration of density of the mollifier reaches the point.

\subsection*{Organization of the paper}
The paper is organized as follows. Section~\ref{sec:frame} introduces notation, basic information on the spaces, as well as it recalls some results of~\cite{para-t-weak}  necessary in our considerations including integration-by-parts fomula, comparison principle, monotonicity trick, and existence of weak solutions to bounded-data problem. Section~\ref{sec:approximation} is devoted to the main tool we derive in the paper, namely time-approximation. In Section~\ref{sec:main proof} we present the proof of the main results. Some classical definitions and theorems are listed in Appendix.

\section{Analytical framework} \label{sec:frame}

In this section we provide necessary notation and basic information on Musielak-Orlicz spaces, afterwards we give also formulations of results coming from~\cite{para-t-weak} such as integration-by-parts formula, comparison principle, monotonicity trick, existence result to the problem with bounded data, and lemma on simplification of anisotropic conditions in the isotropic situation.

\subsection{Notation } We assume $\Omega\subset\rn$ is a bounded Lipschitz domain, $\Omega_T=(0,T)\times\Omega,$ $\Omega_\tau=(0,\tau)\times\Omega.$ If $V\subset\R^K$, $K\in\N$, is a bounded set, then $C_c^\infty(V)$ denotes the class of smooth functions with support compact in $V$. We denote positive part of function signum by $\sg(s)=\max\{0,s/|s|\}$. 

We note that according to \cite[Lemma~2.1]{bbggpv}, for every $u\in W^{1,1}(\Omega)$, there exists a unique measurable function $Z_u:\Omega\to\rn$ such that\[\nabla (T_t(u))=\chi_{\{|u|<t\}}Z_u\quad\text{a.e. in } \Omega, \text{ for every }{t>0}.
\]
Thus, in the theory $Z_u$ is called the generalized gradient of~$u$. Abusing slightly the notation, for $u$ with locally integrable $Z_u$, it is written simply $\nabla u$ instead.

\subsection{Musielak-Orlicz spaces} \label{ssec:mo spaces}

\begin{defi}[$N$-function]\label{def:Nf} Suppose $\Omega\subset\rn$ is an open bounded set. A~function   $M:[0,T]\times\Omega\times\rn\to\r$ is called an $N$-function if it satisfies the
following conditions:
\begin{enumerate}
\item $ M$ is a Carath\'eodory function (i.e. measurable with respect to $(t,x)\in\OT$ and continuous with respect to the last variable), such that $M(t,x,0) = 0$, $\essinf_{(t,x)\in\OT}M(t,x,\xi)>0$ for $\xi\neq 0$, and $M(t,x,\xi) = M(t,x, -\xi)$ a.e. in $\Omega$,
\item $M(t,x,\xi)$ is a convex function with respect to $\xi$,
\item $\lim_{|\xi|\to 0}\mathrm{ess\,sup}_{(t,x)\in\OT}\frac{M(t,x,\xi)}{|\xi|}=0$,
\item $\lim_{|\xi|\to \infty}\mathrm{ess\,inf}_{(t,x)\in\OT}\frac{M(t,x,\xi)}{|\xi|}=\infty$.
\end{enumerate}
Moreover, we call $M$ a locally integrable $N$-function if additionally for every measurable set $G\subset\OT$ and every $z\in\rn$ it holds that
\begin{equation}
\label{ass:M:int}\int_G M(t,x,z)\,dx\,dt<\infty.
\end{equation}
\end{defi}

\begin{defi}[Complementary function] \label{def:conj}
The complementary~function $M^*$ to a function  $M:[0,T]\times\Omega\times\rn\to\r$ is defined by
\[M^*(t,x,\eta)=\sup_{\xi\in\rn}(\xi\cdot\eta-M(t,x,\xi)),\qquad \eta\in\rn,\ x\in\Omega.\]
If $M$ is an $N$-function and $M^*$ its complementary, we have the Fenchel-Young inequality
\begin{equation}
\label{inq:F-Y}|\xi\cdot\eta|\leq M(t,x,\xi)+M^*(t,x,\eta)\qquad \mathrm{for\ all\ }\xi,\eta\in\rn\mathrm{\ and\ a.e.\ }(t,x)\in\OT.
\end{equation}
\end{defi}

\begin{rem}\label{rem:2ndconj} For any function $f:\r^M\to\r$ the second conjugate function $f^{**}$ is convex and $f^{**}(x)\leq f(x)$. In fact,  $f^{**}$ is a convex envelope of $f$, namely it is the biggest convex function smaller or equal to~$f$.
\end{rem}

\begin{defi}\label{def:MOsp} Let $M$ be a locally integrable $N$-function. We deal with the three  Orlicz-Musielak classes of functions.\begin{itemize}
\item[i)]${\cal L}_M(\OT;\rn)$  - the generalised Orlicz-Musielak class is the set of all measurable functions\\ $\xi:\OT\to\rn$ such that
\[\int_\OT M(t,x,\xi(t,x))\,dx\,dt<\infty.\]
\item[ii)]${L}_M(\OT;\rn)$  - the generalised Orlicz-Musielak space is the smallest linear space containing ${\cal L}_M(\Omega;\rn)$, equipped with the Luxemburg norm
\[||\xi||_{L_M}=\inf\left\{\lambda>0:\int_\OT M\left(t,x,\frac{\xi(t,x)}{\lambda}\right)\,dx\leq 1\right\}.\]
\item[iii)] ${E}_M(\OT;\rn)$  - the closure in $L_M$-norm of the set of bounded functions.
\end{itemize}
\end{defi}
Then
\[{E}_M(\OT;\rn)\subset {\cal L}_M(\OT;\rn)\subset { L}_M(\OT;\rn),\]
where the inclusions can be strict.

\medskip

The space ${E}_M(\OT;\rn)$ is separable and due to~\cite[Theorem~2.6]{Aneta} the following duality holds \[({E}_M(\OT;\rn))^*=L_{M^*}(\OT;\rn).\]

\medskip

 We say that an $N$-function $M:[0,T]\times\Omega\times\rn\to\r$ satisfies $\Delta_2$ condition close to infinity (denoted $M\in\Delta_2^\infty$) if there exists a constant $c>0$ and nonnegative integrable function $h:\OT\to\r$ such that for a.e. $(t,x)\in\OT$ it holds 
\begin{equation*}
 M(t,x,2\xi)\leq cM(t,x,\xi)+h(t,x)\qquad\text{for all}\quad \xi\in\rn:\ \ |\xi|>|\xi_0|.
\end{equation*}
If $M\in\Delta_2^\infty$, then
\[{E}_M(\OT;\rn)= {\cal L}_M(\OT;\rn)= {L}_M(\OT;\rn)\]
and $L_M(\OT;\rn)$ is separable. When both  $M,M^*\in\Delta^\infty_2$  then $L_M(\OT;\rn)$ is reflexive, see~\cite{GMWK,gwiazda-non-newt}. 

\medskip

We face the problem \textbf{without} this structure.

\medskip

We apply the following modular Poincar\'{e}-type inequality.
\begin{theo}[Modular Poincar\'e inequality,~\cite{CGZG}]\label{theo:Poincare}
Let $B:\rp\to\rp$ be an arbitrary Young function,  $\Omega\subset\rn$ be a bounded Lipschitz domain, and $W^{1,B}(\Omega)=\{u\in L^1(\Omega):\ \nabla u\in L_B(\Omega)\}$. Then there exist $c^1_P,c^2_P>0$ dependent on $\Omega$ and $N$, such that for every $g\in W_{loc}^{1,1}(\OT)$, such that $\int_\OT B(|\nabla g|)\,dx\,dt<\infty$ and $g$ belongs to weak-* closure of $C_0^\infty(\Omega)$ in $W^{1,B}(\Omega)$ we have
\[\int_\OT B(c^1_P|g|)\,dx\,dt\leq c^2_P
\int_\OT B(|\nabla g|)\,dx\,dt.\]
\end{theo}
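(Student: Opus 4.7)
The plan is to prove the inequality first for $g(t,\cdot)\in C_0^\infty(\Omega)$ and then extend by the weak-$*$ density hypothesis; since the modular splits in $t$ by Fubini, it suffices to fix $t$ and work on $\Omega$. Because $\Omega$ is bounded, I would enclose it in an axis--aligned cube $Q=[a_1,b_1]\times\cdots\times[a_N,b_N]$, set $R:=b_1-a_1$ (so $R$ depends only on $\Omega$ and $N$), and extend any smooth compactly supported $g(t,\cdot)$ by zero to all of $Q$.

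For such smooth $g$ the fundamental theorem of calculus along the first coordinate gives
\[
g(t,x_1,x')=\int_{a_1}^{x_1}\pa_1 g(t,s,x')\,ds,\qquad\text{so}\qquad \frac{|g(t,x)|}{R}\leq \frac{1}{R}\int_{a_1}^{b_1}|\pa_1 g(t,s,x')|\,ds.
\]
Applying Jensen's inequality to the convex Young function $B$ yields
\[
B\!\left(\frac{|g(t,x)|}{R}\right)\leq \frac{1}{R}\int_{a_1}^{b_1} B\!\left(|\pa_1 g(t,s,x')|\right)\,ds.
\]
Integrating in $x_1\in[a_1,b_1]$ cancels the $1/R$ factor; then integrating in $x'$ and in $t\in(0,T)$ and using $|\pa_1 g|\leq|\nabla g|$ produces the inequality for $g\in C_0^\infty(\Omega)$ with $c_P^1=1/R$ and $c_P^2=1$.

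For general $g$ as in the statement, I would pick $g_n\in C_0^\infty(\Omega)$ approximating $g(t,\cdot)$ in the weak-$*$ topology of $W^{1,B}(\Omega)$, apply the smooth--case bound to each $g_n(t,\cdot)$, and pass to the limit by combining weak-$*$ lower semicontinuity of the convex modular $\eta\mapsto\int B(|\eta|)$ on the right with Fatou's lemma on the left (after extracting an a.e.\ convergent subsequence through the compact embedding $W^{1,B}\hookrightarrow L^1$). The main obstacle I anticipate is precisely this density step: in non--reflexive Musielak--Orlicz spaces the weak-$*$ topology is delicate and one cannot freely select modularly convergent approximations. The cleanest remedy is to invoke the Meyers--Serrin type density in the modular topology available in the absence of the Lavrentiev phenomenon, so that $\int B(|\nabla(g_n-g)|)\to 0$ and the limit can be taken termwise on both sides of the modular inequality.
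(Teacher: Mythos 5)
The paper does not prove this statement at all --- it is quoted verbatim from~\cite{CGZG} --- so there is no internal proof to compare against; your proposal must stand on its own. Your smooth case is correct and is the standard argument (fundamental theorem of calculus along one coordinate, monotonicity of $B$, Jensen with respect to the normalized measure $ds/R$, then integration in $x_1$, $x'$ and $t$), and it yields $c_P^1=1/R$, $c_P^2=1$ as you say.

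The gap is in the limit passage. First, weak-$*$ lower semicontinuity of $\eta\mapsto\int B(|\eta|)$ gives $\int B(|\nabla g|)\leq\liminf_n\int B(|\nabla g_n|)$, which is the \emph{wrong direction}: after Fatou on the left you need an upper bound $\limsup_n\int B(|\nabla g_n|)\leq c\int B(|\nabla g|)$ to close the chain, and lower semicontinuity does not provide it. Second, your proposed remedy is off target in two ways: (i) $B:\rp\to\rp$ here is a homogeneous, isotropic Young function, so there is no Lavrentiev phenomenon in play --- the relevant density statement is Gossez's classical one for Orlicz--Sobolev spaces; and (ii) modular convergence only guarantees $\int B(|\nabla(g_n-g)|/\lambda)\to 0$ for \emph{some} $\lambda=\lambda(g)>0$. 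Running your argument at scale $\lambda$ produces $\int B\big(|g|/(2R\lambda)\big)\leq \tfrac12\int B(|\nabla g|)$, i.e.\ a constant $c_P^1$ depending on $g$ through $\lambda$, which contradicts the claim that the constants depend only on $\Omega$ and $N$; without a $\Delta_2$ assumption you cannot rescale this back. Two ways to repair it: (a) avoid approximation of the modular altogether by noting that $g(t,\cdot)$ lies in the $W^{1,1}$-closure of $C_0^\infty(\Omega)$ (weak-$*$ convergence in $L_B$ implies weak $L^1$-convergence on the bounded $\Omega$, and Mazur's lemma upgrades this to strong closure of the subspace), so the zero extension of $g(t,\cdot)$ is in $W^{1,1}(\rn)$, is absolutely continuous on a.e.\ line parallel to $e_1$, and the pointwise identity $g(t,x)=\int_{a_1}^{x_1}\pa_1 g(t,s,x')\,ds$ holds a.e.; then apply Jensen directly to $g$. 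Or (b) approximate by mollification of the zero extension and use Jensen for the convolution, $B(|\varrho_n*\nabla g|)\leq \varrho_n * B(|\nabla g|)$ (valid precisely because $B$ is $x$-independent), which gives the upper bound $\int B(|\nabla g_n|)\leq\int B(|\nabla g|)$ that your limit passage is missing.
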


\subsection{Auxiliary results}\label{ssec:pre}
We provide here formulations of the results of~\cite{para-t-weak} by the authors, which are necessary in our considerations.

\subsubsection*{Integration-by-parts formula}

Arbitrary growth of the modular function implies that the following result is not direct, while inhomogeneity requires absence of Lavretiev's phenomenon.

\begin{theo}[Proposition~4.1, \cite{para-t-weak}]\label{theo:intbyparts} Suppose  a locally integrable $N$-function $M$ satisfies assumptions ($\mathcal{M}$) (resp.~($\mathcal{M}_p$)). Suppose $u:\Omega_T\to\r$ is a measurable function such that for every $k\geq 0$, $T_k(u)\in \VTM$, $u(t,x)\in L^\infty([0,T];L^1(\Omega))$. Let us assume that there exists $u_0\in L^1(\Omega)$ such that $u_0(x):=u(0,x)$. Furthermore, assume that there exist $A\in L_{M^*}(\Omega_T;\rn)$  and $F\in L^1(\Omega_T)$ satisfying
\begin{equation}
\label{eq:theo:int-by-parts-1}
-\int_{\OT}(u-u_0)\partial_t \vp \,dx\,dt+\int_{\OT}A\cdot \nabla\vp \,dx\,dt=\int_{\OT}F\, \vp \,dx\,dt,\qquad \forall_{\vp\in{C_c^\infty}([0;T)\times \Omega)}.
\end{equation}
Then
\begin{equation*}
-\int_{\OT} \left(\int_{u_0}^u h(\s)d\s\right) \partial_t \xi \ \,dx\,dt+\int_{\OT}A\cdot \nabla (h(u)\xi) \,dx\,dt=\int_{\OT}F h(u)\xi \,dx\,dt
\end{equation*}
holds for all $h\in W^{1,\infty}(\r)$, such that $\supp (h')$ is compact and all $\xi\in \VTMi$, such that $\partial_t\xi\in L^\infty(\OT)$ and $\supp\xi(\cdot,x)\subset[0,T)$ for a.e. $x\in\Omega$, in particular for  $\xi \in C_c^\infty([0,T)\times\overline{\Omega})$.
\end{theo}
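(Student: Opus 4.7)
The plan is to derive the stated identity from \eqref{eq:theo:int-by-parts-1} by an approximation argument: regularize $u$ in time, substitute a compound test function of the form $h(u^\mu)\xi$ into an extended form of \eqref{eq:theo:int-by-parts-1}, use the smoothness of $u^\mu$ in time to perform the chain rule and integrate by parts, and then pass to the limit $\mu\to 0$. Since the test-function class in \eqref{eq:theo:int-by-parts-1} is $C^\infty_c([0,T)\times\Omega)$, the first task is to extend admissibility. Combining density of smooth functions in $\VTM$ (guaranteed by $(\mathcal{M})$ or $(\mathcal{M}_p)$, i.e.\ the absence of Lavrentiev's phenomenon) with the Fenchel--Young pairing of $A\in L_{M^*}$ against $\nabla\vp\in L_M$, identity \eqref{eq:theo:int-by-parts-1} continues to hold for every $\vp\in\VTM\cap L^\infty(\OT)$ with $\partial_t\vp\in L^\infty(\OT)$ and $\vp(\cdot,x)$ compactly supported in $[0,T)$ for a.e.\ $x\in\Omega$.

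Next I invoke Theorem~\ref{theo:approx-t} (the new time-approximation result of Section~\ref{sec:approximation}) to produce a sequence $u^\mu$ smooth in $t$, with $u^\mu(0,\cdot)=u_0$, $u^\mu\to u$ in $C([0,T];L^1(\Omega))$ and a.e.\ on $\OT$, and $\nabla T_k(u^\mu)\to\nabla T_k(u)$ modularly in $L_M(\OT;\rn)$ for every $k>0$. Since $\supp h'$ is compact, say in $[-k,k]$, the composition $h(u^\mu)$ is bounded, depends only on $T_k(u^\mu)$ up to an additive constant, and therefore $\vp_\mu:=h(u^\mu)\xi$ with $\xi\in C^\infty_c([0,T)\times\overline{\Omega})$ belongs to the extended admissibility class above. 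Substituting $\vp_\mu$ and expanding $\partial_t[h(u^\mu)\xi]=h'(u^\mu)\partial_t u^\mu\,\xi+h(u^\mu)\partial_t\xi$, the chain-rule identity
\[
(u^\mu-u_0)h'(u^\mu)\partial_t u^\mu=\partial_t\bigl[(u^\mu-u_0)h(u^\mu)-H(u^\mu)+H(u_0)\bigr],\qquad H(s):=\int_0^s h(\sigma)\,d\sigma,
\]
permits an integration by parts in $t$ for the piece in which $u$ is formally replaced by $u^\mu$; the boundary contribution at $t=0$ vanishes because $u^\mu(0,\cdot)=u_0$, and at $t=T$ because $\xi(T,\cdot)=0$.

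For the limit $\mu\to 0$, the correction $\int_{\OT}(u-u^\mu)h'(u^\mu)\partial_t u^\mu\,\xi\,dx\,dt$ tends to zero by the approximation properties built into Theorem~\ref{theo:approx-t}, while $\int_{\OT}(u-u_0)h(u^\mu)\partial_t\xi$ and $\int_{\OT}F h(u^\mu)\xi$ converge by dominated convergence to their natural limits. Collecting these contributions, the time-derivative part converges to $-\int_{\OT}[H(u)-H(u_0)]\partial_t\xi=-\int_{\OT}\bigl(\int_{u_0}^u h(\sigma)\,d\sigma\bigr)\partial_t\xi$, exactly as required. For the gradient term I split $\int_{\OT}A\cdot\nabla[h(u^\mu)\xi]=\int_{\OT}h'(u^\mu)\xi\,A\cdot\nabla T_{k+1}(u^\mu)+\int_{\OT}h(u^\mu)A\cdot\nabla\xi$; the second piece is routine via Fenchel--Young and dominated convergence, while the first is where the substance sits.

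The main obstacle is precisely this first gradient piece: since $A$ is given only in $L_{M^*}$ and not in its separable subspace $E_{M^*}$, mere weak convergence of $\nabla T_{k+1}(u^\mu)$ in $L_M$ would not identify the limit, and one must exploit the modular convergence $\nabla T_{k+1}(u^\mu)\to\nabla T_{k+1}(u)$ furnished by Theorem~\ref{theo:approx-t} together with the Young-measure monotonicity tools recalled in Section~\ref{sec:frame}. This is where the novelty of the new time regularization is essential, since classical Landes-type mollification fails in the time-dependent modular setting, while Theorem~\ref{theo:approx-t} is engineered so that truncated gradients converge modularly. A final density step, again relying on the absence of Lavrentiev's phenomenon, extends the identity from $\xi\in C^\infty_c([0,T)\times\overline{\Omega})$ to the full admissible class $\xi\in\VTMi$ with $\partial_t\xi\in L^\infty(\OT)$ and $\supp\xi(\cdot,x)\subset[0,T)$ for a.e.\ $x\in\Omega$.
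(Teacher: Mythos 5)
The paper does not actually prove this statement: it is imported wholesale as \cite[Proposition~4.1]{para-t-weak}, so your proposal can only be judged on its own terms, and on those terms it has a genuine gap at exactly the point you flag as a mere ``correction''. With the regularization of Theorem~\ref{theo:approx-t} one has $\partial_t u^\mu=\mu(u-u^\mu)$, so the term you discard is
\[
\int_{\OT}(u-u^\mu)\,h'(u^\mu)\,\partial_t u^\mu\,\xi\,dx\,dt=\mu\int_{\OT}(u-u^\mu)^2\,h'(u^\mu)\,\xi\,dx\,dt,
\]
and nothing in Theorem~\ref{theo:approx-t} gives $\mu\|u-u^\mu\|_{L^2}^2\to0$; the theorem yields $L^1$ and modular convergences, which are compatible with this quantity staying bounded away from zero. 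In the classical Landes-type arguments this term is handled only through its \emph{sign} (for $h'\xi$ of one sign it is nonnegative and can be dropped to get an inequality); producing the asserted \emph{equality} is precisely the delicate part of the real proof, and your write-up offers no mechanism for it. Compare how the present paper uses the same ODE structure in the proof of Proposition~\ref{prop:convTkII}: the analogous terms $I_{1,3,1}^{\mu,l}$, $I_{1,3,2}^{\mu,l}$ are shown to be nonnegative, not to vanish.

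A second problem is that Theorem~\ref{theo:approx-t} cannot be applied to $u$ itself. It requires $\vp\in\VTM$, i.e.\ $\nabla\vp\in L_M(\OT;\rn)$, and $\vp_0\in L^\infty(\Omega)$, whereas here only $T_k(u)\in\VTM$ for each $k$ and $u_0\in L^1(\Omega)$. The commutation property (iii) is between the mollification and the gradient of the \emph{same} mollified function; it does not commute with truncation, so $T_k(u^\mu)\neq(T_k(u))^\bullet_\mu$ and the modular convergence $\nabla T_k(u^\mu)\to\nabla T_k(u)$ you rely on is not what the theorem provides. Also, by (ii) the regularization satisfies $\vp^\bullet_\mu(0,\cdot)=\vp_0(1-e^{-\log^2\mu})$ rather than $\vp_0$, so the $t=0$ boundary term does not vanish exactly and needs its own (easy) limit argument. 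On the positive side, your outer skeleton (extending the admissible test class by density, splitting the gradient term, a final density step in $\xi$) is sound, and your worry about the gradient piece is overstated: since $A\in L_{M^*}$ is a \emph{fixed} function, modular convergence of the truncated gradients plus Fenchel--Young and uniform integrability (Lemma~\ref{lem:unif}, Vitali) identifies the limit of $\int_{\OT}A\cdot\nabla(\cdot)$ with no Young-measure or monotonicity machinery --- the real difficulties are the two listed above.
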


\subsubsection*{Comparison principle}

When we notice that, according to the definition, renormalized solutions verify assumptions of~\cite[Theorem~4.1]{para-t-weak}, we get the following comparison principle for them.

\begin{theo}\label{prop:comp-princ}
Suppose  a locally integrable $N$-function $M$ satisfies assumptions ($\mathcal{M}$) (resp.~($\mathcal{M}_p$)), function $A$ satisfies assumptions ($\mathcal{A}$\ref{A1})--($\mathcal{A}$\ref{A3}). Let $v^1,v^2$ be renormalized solutions  to\[\left\{\begin{array}{l}
v^1_t-{\dv }A(t,x,\nabla v^1)= f^1\in L^1(\OT),\\
v^1(0,x)=v^1_0(x)\in L^1(\Omega)
\end{array}\right.\qquad \left\{\begin{array}{l}
v^2_t-{\dv }A(t,x,\nabla v^2)= f^2\in L^1(\OT),\\
v^2(0,x)=v^2_0(x)\in L^1(\Omega),
\end{array}\right.\]where $f^1\leq f^2$ a.e. in~$\OT$ and $ v^1_0 \leq v^2_0$ in~$\Omega.$ Then $v^1\leq v^2$ a.e. in~$\OT$.
\end{theo}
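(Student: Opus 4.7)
The plan is to reduce the $L^1$-data renormalized comparison to the weak-solution comparison principle \cite[Theorem~4.1]{para-t-weak} via truncation, monotonicity, and the integration-by-parts formula of Theorem~\ref{theo:intbyparts}. Set $w := v^1 - v^2$ and aim for $\int_{\Omega} w^+(\tau,x)\,dx \leq 0$ for a.e.\ $\tau \in (0,T)$ by subtracting the two $(\mathcal{R}2)$ identities and testing against a regularization of $\chi_{\{w>0\}}\chi_{[0,\tau]}$. Because $(\mathcal{R}2)$ admits only compactly supported renormalizations, I would first work with $w_l := T_l(v^1) - T_l(v^2)$ against the common cutoff $h_l \in W^{1,\infty}(\R)$ satisfying $h_l \equiv 1$ on $[-l,l]$, $\supp(h_l') \subset \{l < |s| < l+1\}$, and $|h_l'|\leq 1$, then send $l \to \infty$ at the end via the radiation control $(\mathcal{R}3)$.

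The test function of choice is $\xi_{\epsilon,\tau}(t,x) := \psi_\tau(t)\cdot \frac{1}{\epsilon}T_\epsilon(w_l^+(t,x))$, where $\psi_\tau$ is a smooth time cutoff supported in $[0,T)$. Its gradient lies in $L_M$ by $(\mathcal{R}1)$, but $\partial_t \xi_{\epsilon,\tau}$ is not a priori in $L^\infty(\Omega_T)$; so before Theorem~\ref{theo:intbyparts} can be invoked, $v^i$ must be replaced by their time-regularized versions supplied by the machinery of Section~\ref{sec:approximation}, and the regularization parameter sent to zero at the end through the modular convergence proven there. \textbf{This is the principal technical obstacle} of the argument: neither $v^i$ is time-regular enough in the non-reflexive Musielak--Orlicz setting for a direct Steklov average, and it is precisely the new time-approximation of Section~\ref{sec:approximation} that supplies the missing ingredient.

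Once admissibility is secured, the subtracted identity decomposes into three groups of terms. (i) The time-derivative part, after integration by parts in $t$ and letting $\epsilon \to 0$, equals $\int_\Omega w_l^+(\tau,x)\,dx - \int_\Omega (v^1_0 - v^2_0)^+(x)\,dx$, whose second piece vanishes by $v^1_0 \leq v^2_0$. (ii) The diffusion part splits into a principal contribution $\frac{1}{\epsilon}\int \chi_{\{0<w_l<\epsilon\}}\psi_\tau\bigl(A(t,x,\nabla T_l(v^1)) - A(t,x,\nabla T_l(v^2))\bigr)\cdot \nabla w_l\,dx\,dt \geq 0$ by monotonicity $(\mathcal{A}3)$, plus remainder terms carried by $h_l'(v^i)$ and supported in $\{l<|v^i|<l+1\}$; these are controlled via the Fenchel--Young inequality~\eqref{inq:F-Y} by $\int_{\{l<|v^i|<l+1\}}A(t,x,\nabla v^i)\cdot \nabla v^i\,dx\,dt$ and vanish as $l \to \infty$ by $(\mathcal{R}3)$. (iii) The right-hand side $\int(f^1-f^2)h_l(v^i)\xi_{\epsilon,\tau}\,dx\,dt \leq 0$ since $f^1 \leq f^2$ and $\xi_{\epsilon,\tau}\geq 0$, with dominated convergence (using $f^i \in L^1(\Omega_T)$) legitimizing the passages $\epsilon \to 0$ and $l \to \infty$. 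Collecting, $\int_\Omega w^+(\tau,x)\,dx \leq 0$ for a.e.\ $\tau \in (0,T)$, whence $v^1 \leq v^2$ a.e.\ in $\Omega_T$.
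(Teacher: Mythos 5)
Your strategy is the classical direct one (test the difference of the two renormalized identities against $\tfrac{1}{\epsilon}T_\epsilon\bigl((T_l(v^1)-T_l(v^2))^+\bigr)$, use monotonicity, kill the remainders with ($\mathcal{R}$3), send $\epsilon\to0$ and $l\to\infty$), whereas the paper does not redo this argument at all: it simply observes that a renormalized solution satisfies the hypotheses of the comparison principle already established as Theorem~4.1 of the companion paper \cite{para-t-weak}, and quotes that result. So the routes are genuinely different, and the relevant question is whether your sketch actually closes.

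It does not, at precisely the point you yourself flag as ``the principal technical obstacle.'' The renormalized formulation ($\mathcal{R}$2) only controls the pairing of $\partial_t\int_{v_0^i}^{v^i}h(\sigma)\,d\sigma$ against test functions with $\partial_t\vp\in L^\infty(\OT)$, and $\partial_t\bigl(\tfrac{1}{\epsilon}T_\epsilon(w_l^+)\bigr)$ is not such a function; moreover $\partial_t v^i$ does not exist as a function, so the claimed identity for the time term cannot be obtained by ``integration by parts in $t$'' --- it is the formal target whose justification \emph{is} the proof. Your proposed fix (replace $v^i$ by the time-regularizations of Section~\ref{sec:approximation} and pass to the limit by modular convergence) is asserted, not executed: once $v^1$ and $v^2$ are regularized with parameters $\mu$ (and possibly $\mu'$), the time term becomes a sum of cross terms mixing $v^i$, $v^j_\mu$ and $v^i_0$ whose nonnegativity must be extracted from the ODE structure $\partial_t v_\mu=\mu(v-v_\mu)$ of~\eqref{vp:ode} --- exactly the kind of sign-chasing carried out for $I_{1,3,1}^{\mu,l}$ and $I_{1,3,2}^{\mu,l}$ in the proof of Proposition~\ref{prop:convTkII} --- and none of that is done here. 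A second, smaller gap sits in step (ii): besides the $h_l'$-terms killed by ($\mathcal{R}$3), the diffusion part contains contributions of the form $\tfrac{1}{\epsilon}\chi_{\{0<w_l<\epsilon\}}\,h_l(v^i)A(t,x,\nabla v^i)\cdot\nabla T_l(v^j)$ on the sets $\{|v^i|>l\}$ (where $\nabla T_l(v^1)$ and $\nabla T_l(v^2)$ no longer pair into a monotone difference); these carry the factor $\tfrac{1}{\epsilon}$ and are not ``carried by $h_l'(v^i)$,'' so they require a separate argument before the conclusion $\int_\Omega w^+(\tau)\,dx\le 0$ can be drawn.
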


\subsubsection*{Monotonicity trick}
As a consequence of weak monotonicity, we will be able to identify some limits using the following monotonicity trick applied e.g. in~\cite{pgisazg1,pgisazg2,gwiazda-ren-ell,Aneta}.
\begin{lem}[Lemma~6.5, \cite{para-t-weak}]\label{lem:mon}
Suppose $A$ satisfies conditions ($\mathcal{A}$\ref{A1})--($\mathcal{A}$\ref{A3}) with an $N$-function $M$. Assume further that there exist \[{\cal A}\in L_{M^*}(\OT;\rn)\quad\text{ and }\quad \xi\in L_{M}(\OT;\rn),\] such that
\begin{equation}
\label{anty-mon}
\int_\OT \big({\cal A} -A(t,x,\eta)\big)\cdot(\xi(t,x) -\eta)\,dx\,dt\geq0\qquad\forall_{\eta\in L^\infty(\Omega,\rn)}.
\end{equation}
Then
\[A(t,x,\xi)={\cal A}\qquad\text{a.e. in }\ \OT.\]
\end{lem}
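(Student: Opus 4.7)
The strategy is the Minty monotonicity trick adapted to the non-reflexive Musielak--Orlicz setting. The obstruction is that $\xi\in L_M(\OT;\rn)$ need not lie in $L^\infty$, so the natural test $\eta=\xi+\lambda h$ cannot be inserted directly into \eqref{anty-mon}; we must perturb a truncation, and do so through a convex combination in order to avoid any rescaling of $\xi$ that would otherwise require an unavailable $\Delta_2$--type hypothesis on $M$. Fix $h\in L^\infty(\OT;\rn)$ and $\lambda\in(0,1)$. For $s>0$, write $T_s^{\rn}$ for radial truncation at level $s$ on $\rn$ and choose the admissible test
\[\eta_{s,\lambda}:=(1-\lambda)\,T_s^{\rn}\xi+\lambda h\in L^\infty(\OT;\rn).\]
Since pointwise $T_s^{\rn}\xi=\mu_s\,\xi$ with $\mu_s(t,x)\in[0,1]$, convexity of $M$ and $M(\cdot,\cdot,0)=0$ give the uniform-in-$s$ bound $M(\cdot,\cdot,\eta_{s,\lambda})\le(1-\lambda)M(\cdot,\cdot,\xi)+\lambda M(\cdot,\cdot,h)\in L^1(\OT)$, and by the coercivity half of $(\mathcal{A}_2)$, $c_A\,M^*(\cdot,\cdot,A(\cdot,\cdot,\eta_{s,\lambda}))\le M(\cdot,\cdot,\eta_{s,\lambda})$, so $\{A(\cdot,\cdot,\eta_{s,\lambda})\}_s$ is uniformly $L_{M^*}$-modularly bounded and, by Carath\'eodory continuity $(\mathcal{A}_1)$, converges a.e.\ to $A(\cdot,\cdot,(1-\lambda)\xi+\lambda h)$.

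Substituting $\eta=\eta_{s,\lambda}$ into \eqref{anty-mon} and decomposing $\xi-\eta_{s,\lambda}=(\xi-T_s^{\rn}\xi)+\lambda(T_s^{\rn}\xi-h)$ yields
\begin{equation*}
\int_\OT(\mathcal{A}-A(\cdot,\cdot,\eta_{s,\lambda}))\cdot(\xi-T_s^{\rn}\xi)\,dx\,dt+\lambda\int_\OT(\mathcal{A}-A(\cdot,\cdot,\eta_{s,\lambda}))\cdot(T_s^{\rn}\xi-h)\,dx\,dt\ge 0.
\end{equation*}
Pass $s\to\infty$ at $\lambda$ fixed: the first integral vanishes because $\xi-T_s^{\rn}\xi\to 0$ a.e.\ with $M$--majorant $M(\cdot,\cdot,\xi)\,\chi_{\{|\xi|>s\}}\to 0$ in $L^1(\OT)$, paired against the $L_{M^*}$--modularly bounded companion via Fenchel--Young. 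For the second, split each inner product into $A(\eta_{s,\lambda})\cdot\xi$, $A(\eta_{s,\lambda})\cdot h$, and $A(\eta_{s,\lambda})\cdot(\xi-T_s^{\rn}\xi)$; each piece admits, via Fenchel--Young and $(\mathcal{A}_2)$, the uniform-in-$s$ $L^1$--majorant $c_A^{-1}[(1-\lambda)M(\cdot,\cdot,\xi)+\lambda M(\cdot,\cdot,h)]+M(\cdot)$, where $M(\cdot)$ stands for $M(\xi)$, $M(h)$, or $M(\xi-T_s^{\rn}\xi)\le M(\xi)$ respectively, so dominated convergence applies. Dividing by $\lambda$ we obtain
\begin{equation*}
\int_\OT(\mathcal{A}-A(\cdot,\cdot,(1-\lambda)\xi+\lambda h))\cdot(\xi-h)\,dx\,dt\ge 0.
\end{equation*}
Sending $\lambda\to 0^+$, the same term-by-term dominated convergence (the majorants above are uniform in $\lambda\in[0,1]$) delivers
\begin{equation*}
\int_\OT(\mathcal{A}-A(\cdot,\cdot,\xi))\cdot(\xi-h)\,dx\,dt\ge 0\qquad\forall\,h\in L^\infty(\OT;\rn).
\end{equation*}

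Set $D:=\mathcal{A}-A(\cdot,\cdot,\xi)\in L_{M^*}$. The substitution $h\mapsto-h$ gives the reverse inequality, so $\bigl|\int_\OT D\cdot h\,dx\,dt\bigr|\le\int_\OT D\cdot\xi\,dx\,dt<\infty$ for every $h\in L^\infty$, with finiteness of the right-hand side by Fenchel--Young in the duality $(L_{M^*},L_M)$. Scaling $h\mapsto n\phi$ for arbitrary $\phi\in L^\infty$ and $n\to\infty$ forces $\int_\OT D\cdot\phi\,dx\,dt=0$ for every $\phi\in L^\infty$, hence $D=0$ a.e., i.e.\ $\mathcal{A}(t,x)=A(t,x,\xi(t,x))$ for almost every $(t,x)\in\OT$.

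\textbf{Main obstacle.} The delicate point is the passage $s\to\infty$ (and later $\lambda\to 0^+$) in the presence of $\xi\notin E_M$, which blocks any direct weak-$*$ argument in the duality $L_{M^*}=(E_M)^*$ paired against $\xi$. The convex-combination ansatz $\eta_{s,\lambda}=(1-\lambda)T_s^{\rn}\xi+\lambda h$ (rather than the additive $T_s^{\rn}\xi+\lambda h$) is the crucial device: it delivers the uniform-in-$s$ $L^1(\OT)$--bound on $M(\cdot,\cdot,\eta_{s,\lambda})$ \emph{without any $\Delta_2$--type scaling of $\xi$}, which through $(\mathcal{A}_2)$ dominates $|A(\cdot,\cdot,\eta_{s,\lambda})\cdot\xi|$ by a fixed integrable function. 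Term-by-term dominated convergence, combined with Carath\'eodory continuity of $A$, then supplants the weak-$*$ argument and bypasses the non-reflexivity altogether.
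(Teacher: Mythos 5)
Your overall strategy --- the convex-combination Minty perturbation $\eta_{s,\lambda}=(1-\lambda)T_s^{\rn}\xi+\lambda h$, dominated convergence first in $s$ and then in $\lambda$, and the final scaling $h\mapsto n\phi$ to kill $D=\mathcal{A}-A(\cdot,\cdot,\xi)$ --- is a legitimate variant of the monotonicity trick, and the concluding duality step is correct. (Note the paper does not prove this lemma itself; it imports it from \cite{para-t-weak}.) There is, however, one concrete gap: every uniform majorant in your argument rests on $M(\cdot,\cdot,\xi)\in L^1(\OT)$, i.e.\ on $\xi\in{\cal L}_M(\OT;\rn)$, whereas the hypothesis is only $\xi\in L_M(\OT;\rn)$, which guarantees merely $\int_\OT M(t,x,\xi/\lambda_1)\,dx\,dt<\infty$ for \emph{some} $\lambda_1>0$; the paper explicitly records that the inclusion ${\cal L}_M\subset L_M$ can be strict. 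Concretely, $M(\cdot,\cdot,\eta_{s,\lambda})\le(1-\lambda)M(\cdot,\cdot,\xi)+\lambda M(\cdot,\cdot,h)$ is a correct pointwise inequality, but its right-hand side need not be integrable, so the asserted uniform-in-$s$ modular bound on $A(\cdot,\cdot,\eta_{s,\lambda})$ in $L_{M^*}$ (via $c_A M^*(A(\eta_{s,\lambda}))\le M(\eta_{s,\lambda})$) and all subsequent majorants containing $M(\xi)$ or $M(T_s^{\rn}\xi)\le M(\xi)$ are unavailable. Rescaling does not repair this: coercivity controls $M^*(A(\eta))$ by $M(\eta)$ at scale one, and your test functions must converge to $\xi$ itself (not to $\xi/\lambda_1$) for the limit to identify $A(\cdot,\cdot,\xi)$. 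Your proof is therefore complete only under the stronger hypothesis $\xi\in{\cal L}_M$ --- which, to be fair, holds in the paper's application of the lemma, since in the proof of Proposition~\ref{prop:convTkII} one takes $\xi=\nabla T_k(u)$, which obeys the modular a priori bound~\eqref{apriori} --- but it does not prove the lemma as stated.

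The standard device that avoids this is to never evaluate $A$ at an argument comparable to $\xi$. Set $\OT^j:=\{(t,x):|\xi(t,x)|\le j\}$ and test~\eqref{anty-mon} with $\eta=\xi\mathds{1}_{\OT^j\setminus E}+h\mathds{1}_E$ for an arbitrary measurable $E\subset\OT^j$; then $\xi-\eta$ vanishes on $\OT^j\setminus E$, equals $\xi-h$ on $E$ and $\xi$ on $\OT\setminus\OT^j$, while $A(\eta)$ appears only as $A(h)$ or $A(0)=0$. The generalized H\"older inequality gives $\mathcal{A}\cdot\xi\in L^1(\OT)$ for any $\xi\in L_M$, so the tail $\int_{\OT\setminus\OT^j}\mathcal{A}\cdot\xi\,dx\,dt\to0$ as $j\to\infty$, and since $E$ is arbitrary one obtains $(\mathcal{A}-A(t,x,h))\cdot(\xi-h)\ge0$ a.e.\ for every fixed $h\in L^\infty$. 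Running $h$ over a countable dense set of constant vectors $z\in\rn$ and then applying the pointwise Minty argument ($z=\xi(t,x)+\epsilon w$, $\epsilon\to0^+$, using continuity of $A(t,x,\cdot)$) finishes the proof with no dominated-convergence majorants involving $M(\xi)$ at all. If you restate your lemma with $\xi\in{\cal L}_M$, or splice this localization step into your argument, the rest of your write-up stands.
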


\subsubsection*{Existence of a weak solution for the problem with cut-off data} \label{sec:bounded}
The following theorem is a direct consequence of~\cite[Theorem 1.2]{para-t-weak} when we just choose there particular form of bounded data.

\begin{theo}
\label{theo:bound} Suppose $[0,T]$ is a finite interval, $\Omega$ is a bounded Lipschitz domain in $ \rn$,   $N>1$,  $f\in L^1(\OT)$, $u_0\in L^1(\Omega)$, function $A$ satisfy assumptions ($\mathcal{A}$\ref{A1})--($\mathcal{A}$\ref{A3}) with an $N$-function~$M$ satisfying ($\mathcal{M}$) or ($\mathcal{M}_p$). Then for every  $n\in\N$ there exists a weak solution to the problem  \begin{equation}\label{eq:bound}
\left\{\begin{array}{ll}
\partial_t u_n -\dv A (t,x,\nabla u_n )= T_n (f) & \ \mathrm{ in}\  \OT,\\
u_n(t,x)=0 &\ \mathrm{  on} \ (0,T)\times\partial\Omega,\\
u_n (0,\cdot)=u_{0,n}(\cdot)=T_n(u_0)
 & \ \mathrm{ in}\  \Omega.
\end{array}\right.
\end{equation} Namely, there exists $u_n \in \VTMi$, such that for any $\vp\in C_c^\infty([0,T)\times \Omega)$
\begin{eqnarray}\label{weak-reg}
-\iOT u_n \partial_t \vp\, dx\, dt- \iO u_n(0)\vp(0)\, dx+\iOT A (t,x,\nabla u_n )\cdot \nabla \vp \, dx\, dt= \iOT T_n(f)\vp\, dx\, dt.
\end{eqnarray}
\end{theo}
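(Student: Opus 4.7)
The plan is to invoke the main existence theorem for bounded-data problems from \cite{para-t-weak}, checking that its hypotheses are satisfied by our particular choice of data. Since $f\in L^1(\OT)$ and $u_0\in L^1(\Omega)$, the truncations $T_n(f)$ and $T_n(u_0)=u_{0,n}$ satisfy $\|T_n(f)\|_{L^\infty(\OT)}\leq n$ and $\|u_{0,n}\|_{L^\infty(\Omega)}\leq n$, so they fit squarely into the bounded-data framework. Since the structural conditions ($\mathcal{A}$\ref{A1})--($\mathcal{A}$\ref{A3}) and the balance condition on $M$ are assumed in the statement, the cited result applies verbatim and yields $u_n\in\VTMi$ satisfying the weak formulation~\eqref{weak-reg}.

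If a self-contained proof is desired, the scheme would be the standard Galerkin/approximation construction. First, I would fix a sequence of finite-dimensional subspaces $V_m\subset W^{1,\infty}_0(\Omega)$ whose union is dense in the relevant modular closure, solve the finite-dimensional ODE system obtained by testing~\eqref{eq:bound} against a basis of $V_m$, and obtain approximations $u_{n,m}$. The a priori estimate is derived by testing with $u_{n,m}$ itself: using ($\mathcal{A}$\ref{A2}), Young's inequality, and the boundedness of the data, one controls
\[
\sup_{t\in[0,T]}\|u_{n,m}(t)\|_{L^2(\Omega)}^2 + \int_{\OT}M(t,x,\nabla u_{n,m})\,dx\,dt \leq C(n),
\]
where $C(n)$ depends on $n$ but not on $m$. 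The growth bound in ($\mathcal{A}$\ref{A2}) then gives $\{A(\cdot,\cdot,\nabla u_{n,m})\}_m$ bounded in $L_{M^*}(\OT;\rn)$.

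Next, passing to the limit $m\to\infty$, one extracts weak-$*$ limits in $L_{M^*}$ for the flux and weak limits in the modular sense for the gradient, using the biting-lemma-type arguments from \cite{para-t-weak}. Strong convergence of $u_{n,m}$ a.e. in $\OT$ follows from a compactness argument on truncations combined with the Aubin--Lions type result in the Musielak--Orlicz framework from \cite{para-t-weak}, which is available precisely because the balance condition ($\mathcal{M}$) / ($\mathcal{M}_p$) prevents Lavrentiev's phenomenon and guarantees the approximation property of smooth functions. The identification $\overline{A(t,x,\nabla u_{n,m})}=A(t,x,\nabla u_n)$ in the limit is then carried out via the monotonicity trick Lemma~\ref{lem:mon}, applied with $\xi=\nabla u_n$ and $\mathcal{A}$ the weak-$*$ limit of the fluxes.

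The main obstacle is the passage to the limit in the nonlinear term $A(t,x,\nabla u_{n,m})$, since we work in non-reflexive anisotropic Musielak--Orlicz spaces where neither dominated convergence nor standard compactness of gradients is available. This is handled by the modular approximation machinery developed in \cite{para-t-weak}: the balance condition on $M$ ensures that smooth functions are dense in the modular topology, which in turn allows the monotonicity trick to be applied with test functions $\eta\in L^\infty(\Omega;\rn)$ as in Lemma~\ref{lem:mon}. Since all of these ingredients are already packaged into \cite[Theorem~1.2]{para-t-weak}, the cleanest route is simply to quote that result for the data $T_n(f)$, $T_n(u_0)$.
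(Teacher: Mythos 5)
Your proposal is correct and matches the paper's own treatment: Theorem~\ref{theo:bound} is obtained there simply as a direct consequence of \cite[Theorem~1.2]{para-t-weak} applied to the bounded data $T_n(f)$ and $T_n(u_0)$, exactly as in your first paragraph. The additional Galerkin sketch is a reasonable outline of what underlies the cited result, but the paper does not reproduce it and neither need you.
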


\section{Approximation in Musielak-Orlicz spaces}\label{sec:approximation}

In the case of~classical Orlicz spaces, the crucial density result was provided by Gossez~\cite{Gossez}. In the case of~$x$--dependent and anisotropic log-H\"{o}lder continuous modular functions the absence of Lavrentiev's phenomenon was proven in~\cite{pgisazg1,pgisazg2}, further refined in isotropic case in~\cite{yags} to cover both -- log-H\"{o}lder condition in the variable exponent and closeness of parameters in double-phase space sharp due to~\cite{min-double-reg1}. Finally, in~\cite{para-t-weak}  space-approximation and easy time-approximation results we need here are provided under our anisotropic conditions. 


The above mentioned approximation results are applied in order to obtain existence of solutions to problem with bounded data (cf. Theorem~\ref{theo:bound}), and  -- consequently -- in our considerations, although we do not use them here explicitly. However, here they are not sufficient in identification of~the~limit in Proposition~\ref{prop:convTkII}. We shall need there much more delicate  time-approximation that converges modularly, commutes with the space gradient, and has properly convergent time derivatives. When the modular function is time-dependent we cannot use the Landes regularization coming from~\cite{Landes-meth}, as it was done in~\cite{gwiazda-ren-para,pgisazg2}, because due to time-dependence of $M$ the Landes regularization is not mapping $L_M$ into itself anymore. Moreover, we shall need a~few more delicate properties here. Nonetheless, careful merging the ideas of Landes on the small but not uniformly controlled time-intervals enables to prove the following result.

\begin{theo}[Approximation in time]\label{theo:approx-t}

Let $\Omega$ be a bounded Lipschitz domain in $\rn$, a locally integrable $N$-function $M$~satisfy condition ($\mathcal{M}$) (resp.~($\mathcal{M}_p$)), $\vp\in \VTM$, and $\vp_0\in L^\infty(\Omega)$. Then there exist sequences $\{\vp_\mu\}_\mu,\{ \vp^\bullet_\tm\}_\tm\subset \VTM$ with $\mu>>1$, such that
\begin{itemize}
\item[i)] for every $\mu$ and a.e. $x\in\Omega$ function $\vp_\mu(\cdot,x)\in C^\infty([0,T))$ and satisfies
\begin{equation}
\label{vp:ode}\begin{array}{cc}
\left\{\begin{array}{ll}
\pa_t\vp_{\mu}&={\mu}(\vp -\vp_\mu)\ \ \text{a.e. in }\OT,\\
\vp_\mu(0,x)&=\vp_0(x) \quad \text{a.e. in }\Omega,
\end{array}\right.\end{array}
\end{equation}
\item[ii)] for every $\mu$ we have $\vp_\mu^\bullet(0,x)=\vp_0(x)(1-e^{-\log^2\mu})$,
\item[iii)] $(\nabla \vp)^\bullet_\tm=\nabla(\vp^\bullet_\tm)$, 
\item[iv)] $\lim_{\tm\to\infty}\vp^\bullet_\tm=  {\vp}$ strongly in $L^{1}(\Omega_T)$ and $\lim_{\tm\to\infty}(\nabla \vp )^\bullet_\tm= \nabla  {\vp}$ modularly in $L_M(\Omega_T;\rn).$

\item[v)] If additionally $\vp\in L^\infty(\OT)$, then $\|\vp^\bullet_\tm\|_{L^\infty(\OT)}\leq \|\vp \|_{L^\infty(\OT)}$,  for every $\mu$ and a.e. $x\in\Omega$ function $\vp_\mu^\bullet(\cdot,x)$ belongs to $ W^{1,\infty}([0,T))$ and furthermore \[\lim_{\mu\to\infty}\|\vp_\mu-\vp_\mu^\bullet\|_{L^\infty(\Omega_T)}=0\qquad \text{and} \qquad  \lim_{\mu\to\infty}\|\pa_t\big(\vp_\mu-\vp_\mu^\bullet\big)\|_{L^\infty(\OT)}=0.\]

\end{itemize} 
\end{theo}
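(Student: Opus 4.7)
The plan is to take $\varphi_\mu$ to be the classical Landes--Mignot--Puel regularization, namely the unique solution of the linear ODE in (i):
\begin{equation*}
\varphi_\mu(t,x)=e^{-\mu t}\varphi_0(x)+\mu\int_0^t e^{-\mu(t-s)}\varphi(s,x)\,ds.
\end{equation*}
Property (i) is then immediate, and if $M$ were time--independent this formula would already produce modular convergence $\nabla\varphi_\mu\to\nabla\varphi$ because the time convolution is against the probability density $\mu e^{-\mu\cdot}$ extended by $\varphi_0$. The obstruction, flagged in the paragraph before the statement, is that $\nabla\varphi_\mu(t,x)$ depends on $\nabla\varphi(s,x)$ for all $s\in[0,t]$; when $M=M(t,x,\xi)$ varies in time, a pointwise bound of $M(t,x,\nabla\varphi_\mu(t,x))$ by averages of $M(s,x,\nabla\varphi(s,x))$ forces a comparison of $M$ at far--away times, which spoils modular control.

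To correct this I would introduce the scale $h_\mu:=\log^2\mu/\mu$, which tends to $0$ but satisfies $\mu h_\mu=\log^2\mu\to\infty$, and define $\varphi^\bullet_\mu$ as a truncated and renormalized Landes regularization on the backward window $[(t-h_\mu)_+,t]$, of schematic shape
\begin{equation*}
\varphi^\bullet_\mu(t,x)=\bigl(1-e^{-\log^2\mu}\bigr)\Bigl(\varphi_0(x)\,\chi_{\{t<h_\mu\}}\,e^{-\mu t}+\frac{\mu}{1-e^{-\mu\min(t,h_\mu)}}\int_{(t-h_\mu)_+}^t e^{-\mu(t-s)}\varphi(s,x)\,ds\Bigr),
\end{equation*}
with the constants tuned so that $\varphi^\bullet_\mu(0,x)=\varphi_0(x)(1-e^{-\log^2\mu})$, yielding (ii). Property (iii) is differentiation under the spatial--time convolution. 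For (v), the $L^\infty$--bound is a convex combination estimate; the $L^\infty$--closeness of $\varphi_\mu$ and $\varphi^\bullet_\mu$, and of their time derivatives, then follows because the discarded pieces (the tail of the Landes integral on $[0,t-h_\mu]$ and the $e^{-\mu t}\varphi_0$ term for $t>h_\mu$) carry weight at most $e^{-\mu h_\mu}=e^{-\log^2\mu}$, which beats the factor $\mu$ produced by $\partial_t$.

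The heart of the statement is (iv). Strong $L^1$ convergence $\varphi^\bullet_\mu\to\varphi$ is the standard approximate--identity argument, since $\varphi^\bullet_\mu$ is a convolution of $\varphi$ against a family concentrating at $s=t$, up to a small boundary layer $t<h_\mu$. For modular convergence of gradients I would cover $\Omega_T$ by boxes $I_\ell\times Q_m$ with $|I_\ell|,\,{\rm diam}\,Q_m\sim h_\mu$ and, for $(t,x)\in I_\ell\times Q_m$, apply Jensen's inequality to the convex function $(M_{I_\ell,Q_m})^{**}$:
\begin{equation*}
(M_{I_\ell,Q_m})^{**}\!\bigl(\nabla\varphi^\bullet_\mu(t,x)\bigr)\le C\mu\!\int_{(t-h_\mu)_+}^t e^{-\mu(t-s)}(M_{I_\ell,Q_m})^{**}\!\bigl(\nabla\varphi(s,x)\bigr)\,ds.
\end{equation*}
Condition $(\mathcal{M})$ (or $(\mathcal{M}_p)$) then converts $(M_{I_\ell,Q_m})^{**}$ back to $M(t,x,\cdot)$ at the cost of a factor $\Theta(h_\mu,h_\mu^{-N})$, which stays bounded by~\eqref{ass:M:vp}. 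Combining with a.e.\ convergence $\nabla\varphi^\bullet_\mu\to\nabla\varphi$ (Lebesgue differentiation applied to the mollified expression) and Vitali, one concludes modular convergence.

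The main obstacle is matching the truncation scale to the balance condition. The window $h_\mu$ must shrink fast enough that the oscillation of $M$ on it is controlled by $\Theta$, yet slowly enough ($\mu h_\mu\to\infty$) that the density $\mu e^{-\mu(t-s)}$ on $[(t-h_\mu)_+,t]$ retains almost all of its mass; the exponential factor $e^{-\log^2\mu}$ in (ii) is tuned precisely to dominate the polynomial losses $h_\mu^{-N}$ hidden in $\Theta$. Obtaining (i) for the untruncated $\varphi_\mu$ simultaneously with (ii)--(v) for the truncated $\varphi^\bullet_\mu$, and making their gradients compatible through (iii), is exactly the ``merging of the Landes idea on the split time interval with the concentration analysis of the mollifier'' highlighted before the statement.
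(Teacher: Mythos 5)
This is essentially the paper's own construction and argument: the same Landes regularization $\varphi_\mu$ solving \eqref{vp:ode}, the same truncation of the kernel to the backward window of length $\ve(\mu)=\log^2\mu/\mu$ (chosen so that $\mu\ve(\mu)\to\infty$ while $\ve(\mu)\to0$), the same Jensen-plus-balance-condition mechanism on a covering of $\Omega_T$ by boxes of the matching scale to get the uniform modular estimate (the paper's Lemma~\ref{lem:step2prevt}), and the same $e^{-\mu\ve(\mu)}=e^{-\log^2\mu}$ tail estimates for \emph{(ii)} and \emph{(v)}; the only genuine (and harmless) deviation is that you close \emph{(iv)} via a.e.\ convergence and Vitali with the pointwise Jensen domination, whereas the paper goes through density of simple functions and continuity of shifts. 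One small correction to your formula: the paper does \emph{not} renormalize the truncated kernel (its mass is exactly $1-e^{-\log^2\mu}$, which is precisely what yields \emph{(ii)}), and your written expression, taken literally, double-counts the initial datum for $0<t<h_\mu$ (total weight close to $2$ near $t=0$), which would break $\|\varphi^\bullet_\mu\|_{L^\infty}\le\|\varphi\|_{L^\infty}$ and the $L^\infty$-closeness to $\varphi_\mu$ in \emph{(v)} — simply extending $\varphi$ by $\varphi_0$ for $t<0$ and truncating the convolution, without any renormalizing factor, is what makes all the constants come out right.
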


 Let us consider $\xi:\R\times\Omega\to\R^N$. When $\vr_{\tm}(s)={\tm} e^{-{\tm} s}\mathds{1}_{[0,\infty)}(s)$, ${\tm}>2$, the regularized function $\xi_{\tm}:\R\times\Omega\to\R$ is defined by $\xi_{\tm}(t,x):=(\vr_{\tm}*{\xi})(t,x),$ where $*$ stands for the convolution is in the time variable. Then
\begin{equation}
\label{ximu} \xi_{\tm}(t,x) ={\tm}\int_{-\infty}^t e^{{\tm}(s-t)}\xi(s,x)\,ds.
\end{equation}
Define further
 \begin{equation}
\label{xibmu} \xi_{\tm}^\bullet(t,x) ={\tm}\int_{t-\ve(\tm)}^t e^{{\tm}(s-t)}\xi(s,x)\,ds\qquad\text{with}\qquad\ve(\tm)=
\frac{\log^2\tm}{\tm}.
\end{equation}

We provide a uniform estimate in the following lemma and then conclude the proof of Theorem~\ref{theo:approx-t}.

\begin{lem}\label{lem:step2prevt} Let an $N$-function $M$ satisfy assumptions ($\mathcal{M}$), resp.~($\mathcal{M}_p$). We extend arbitrary $\xi\in \VTMi$ by $\xi(0,x)$ on $(-\infty,0)$ and by $0$ on $(T,\infty)$ and consider $\xi^\bullet_{\tm}$ be given by~\eqref{xibmu}. Then, there exist  constants  $C_1,C_2>0$ independent of ${\tm}$, such that for all  ${\tm}>>1$ and every $\xi\in\VTMi$ we have
\begin{equation}
\label{in:Md<M-t}
\int_\OT M(t,x,\xi_{\tm}^\bullet(t,x))\, dx\,dt\leq C_1 \int_\OT M\left(t,x, C_2\xi(t,x))\right)\, dx\,dt.
\end{equation}
\end{lem}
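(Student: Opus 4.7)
The plan is a Jensen-type modular estimate combined with the scale balance condition $(\mathcal{M})$ (resp.\ $(\mathcal{M}_p)$). I set $\varepsilon=\varepsilon(\mu)=\log^2\mu/\mu$ and introduce the kernel $P_{t,\mu}(ds)=\mu e^{\mu(s-t)}\mathds{1}_{[t-\varepsilon,t]}(s)\,ds$, which has total mass $\beta_\mu=1-e^{-\log^2\mu}\in(0,1)$; then $\xi^\bullet_\mu(t,x)=\int \xi(s,x)\,dP_{t,\mu}(s)$.

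First, I cover $[0,T]\times\Omega$ by finitely many boxes $B_{k,j}=\tilde I_k\times Q_j$ with $|\tilde I_k|\leq 2\varepsilon$ and $\mathrm{diam}(Q_j)<4\varepsilon\sqrt{N}$, arranged so that $[t-\varepsilon,t]\subset \tilde I_k$ for every $(t,x)\in B_{k,j}$ (two consecutive slabs of a regular time-partition suffice). Writing $(M_{k,j})^{**}$ for the convex envelope of the essential infimum $M_{\tilde I_k,Q_j}$ and using $(M_{k,j})^{**}(0)=0$, convexity together with Jensen's inequality gives, pointwise on $B_{k,j}$,
\[
(M_{k,j})^{**}(\xi^\bullet_\mu(t,x))\ \leq\ \int (M_{k,j})^{**}(\xi(s,x))\,dP_{t,\mu}(s)\ \leq\ \int M(s,x,\xi(s,x))\,dP_{t,\mu}(s),
\]
the last inequality using $(M_{k,j})^{**}\leq M_{\tilde I_k,Q_j}\leq M(s,x,\cdot)$ on $\tilde I_k\times Q_j$. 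Condition $(\mathcal{M})$ then bridges back to $M(t,x,\cdot)$: for $|\xi^\bullet_\mu(t,x)|>|\xi_0|$,
\[
M(t,x,\xi^\bullet_\mu(t,x))\ \leq\ \Theta\big(2\varepsilon,|\xi^\bullet_\mu(t,x)|\big)\,(M_{k,j})^{**}(\xi^\bullet_\mu(t,x)).
\]

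The factor $\Theta$ is controlled by a dichotomy on the value of $|\xi^\bullet_\mu|$. On $\{|\xi^\bullet_\mu|\leq(2\varepsilon)^{-N}\}$, the hypothesis \eqref{ass:M:vp} yields $\Theta(2\varepsilon,|\xi^\bullet_\mu|)\leq\Theta(2\varepsilon,(2\varepsilon)^{-N})\leq C$ for $\mu\gg 1$, and the chain of inequalities above delivers the desired bound without any enlargement of $\xi$. On the complement $\{|\xi^\bullet_\mu|>(2\varepsilon)^{-N}\}$---the main technical obstacle---the dilation $C_2>1$ appearing in the statement becomes essential: Jensen forces some $|\xi(s,x)|$ on the mollification window to be correspondingly large, and the superlinear growth of $M$ together with the elementary inequality $M(s,x,C_2\eta)\geq C_2 M(s,x,\eta)$ (valid for convex $M$ with $M(s,x,0)=0$ and $C_2\geq 1$) lets me absorb the unbounded factor $\Theta(2\varepsilon,|\xi^\bullet_\mu|)$ into $M(s,x,C_2\xi(s,x))$ at the price of a larger multiplicative constant. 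Under $(\mathcal{M}_p)$ this step becomes cleaner thanks to the polynomial lower bound $M(s,x,\eta)\geq c_{gr}|\eta|^p$, with $\delta^{-N/p}$ replacing $\delta^{-N}$. The remaining set $\{|\xi^\bullet_\mu|\leq|\xi_0|\}$ contributes at most $\int_{\OT}\sup_{|\eta|\leq|\xi_0|}M(t,x,\eta)\,dx\,dt<\infty$ by local integrability of $M$.

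To close, I integrate over $\OT$, apply Fubini to swap $s$ and $t$ (the extensions $\xi(\cdot,x)=\xi(0,x)$ on $(-\infty,0)$ and $\xi(\cdot,x)=0$ on $(T,\infty)$ keep all integrals well-defined), use $\int_s^{s+\varepsilon}\mu e^{\mu(s-t)}\,dt\leq 1$, and sum over the finite covering $\{B_{k,j}\}$. This produces the estimate with $C_1,C_2>0$ depending only on $\limsup_{\delta\to 0^+}\Theta(\delta,\delta^{-N})$, on $|\xi_0|$, and on the local integrability constant of $M$---in particular independent of $\mu$. The hardest point throughout is the control of $\Theta(2\varepsilon,|\xi^\bullet_\mu|)$ on $\{|\xi^\bullet_\mu|>(2\varepsilon)^{-N}\}$; carrying $C_2>1$ on the right-hand side of the claim is precisely what makes that bound viable.
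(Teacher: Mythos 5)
Your overall architecture --- cover $\OT$ by time--space boxes at scale $\sim\ve(\mu)$, compare $M(t,x,\cdot)$ with the convex envelope of the local infimum via $(\mathcal{M})$, apply Jensen for the sub-probability kernel, then Fubini and Young's convolution inequality --- is the same as the paper's. The argument breaks down, however, exactly at the point you yourself flag as the main obstacle: the set $\{|\xi^\bullet_\mu|>(2\ve)^{-N}\}$. There the multiplier $\Theta\big(2\ve,|\xi^\bullet_\mu(t,x)|\big)$ is unbounded in $(t,x)$ for each fixed $\mu$, because condition $(\mathcal{M})$ controls $\Theta(\delta,\delta^{-N})$ only along the diagonal $\delta\to0^+$ and says nothing about $\Theta(\delta,R)$ as $R\to\infty$ with $\delta$ fixed. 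The tool you invoke to absorb it, $M(s,x,C_2\eta)\ge C_2\,M(s,x,\eta)$, buys only the \emph{fixed} factor $C_2$; no fixed dilation can absorb an unbounded multiplier, and making $C_2$ depend on $|\xi^\bullet_\mu(t,x)|$ contradicts the statement. The same objection applies under $(\mathcal{M}_p)$: the polynomial lower bound improves the gain to $C_2^p$, still a constant. (A secondary issue: your treatment of $\{|\xi^\bullet_\mu|\le|\xi_0|\}$ produces an \emph{additive} constant, whereas \eqref{in:Md<M-t} is purely multiplicative; since the lemma is later applied to differences whose modular tends to zero, an additive term would destroy that application.)

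The paper closes the gap by never letting the bad set arise. After normalizing $\|\xi\|_{L^\infty(0,T;L^1(\Omega))}\le1$ it derives the pointwise a priori bound $|\xi^\bullet_\mu(t,x)|\le c(\Omega)\mu$ (see \eqref{xidest}) and then exploits a deliberate mismatch of scales: the mollification window has length $\nu(\mu)\sim\log^2\mu/\mu$, but the box size $\delta$ fed into $(\mathcal{M})$ is chosen so that $\delta^{-N}=c(\Omega)\mu$, i.e. $\delta\sim\mu^{-1/N}\gg\nu(\mu)$ for $N>1$; monotonicity of $\Theta$ in its first argument then gives $\Theta(\nu(\mu),c(\Omega)\mu)\le\Theta(\delta,\delta^{-N})$, which is bounded by \eqref{ass:M:vp}. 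In your notation: once $|\xi^\bullet_\mu|\le c(\Omega)\mu$ is in hand, your bad set is empty for $\mu\gg1$ because $(2\ve)^{-N}\sim(\mu/\log^2\mu)^{N}\gg\mu$, and only your good branch is ever used. Without some such uniform bound on $|\xi^\bullet_\mu|$ the dichotomy cannot be closed. Note also that $C_2$ in the paper plays a different role from the one you assign it: it is the Jensen normalization $1/(1-e^{-\mu\ve(\mu)})\le 1/(1-e^{-1})$ for the sub-probability kernel, not an absorption device.
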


\begin{proof} We fix an arbitrary ${\tm}>>1$ and consider a family of time intervals $I^{\frac{1}{\tm}}_i=[t_i^{\frac{1}{\tm}},t_{i+1}^{\frac{1}{\tm}})$ covering $[0,T]$ and such that $|I^{\frac{1}{\tm}}_i|=|I^{\frac{1}{\tm}}_k|=\frac{1}{\tm}$ for every $i,k>1$. We denote
\[ J^{\frac{1}{\tm}}_i:=[t^{\frac{1}{\tm}}_i,t^{\frac{1}{\tm}}_{i+1}+\ve(\mu))\qquad\text{and} \qquad |J^{\frac{1}{\tm}}_i|<\frac{1}{\mu}+\ve(\mu)=:{\nu(\mu)}\]
and observe that $\lim_{\mu\to\infty}\nu(\mu)=0$. We consider
\begin{equation}\label{Mimu}
 {M}_{i,\frac{1}{\tm}}(x,\eta)=\inf\{ M(t,x,\eta):\ \ {t\in J_i^\frac{1}{\tm}\cap[0,T]}\}, 
\end{equation} and its conjugate $(M_{i,\frac{1}{\tm}})^{**} $, see~Remark~\ref{rem:2ndconj}.  Since $M(t,x,\xi)=0$ whenever $\xi=0$, we have
\begin{equation}
\label{M:div-mult-t}\begin{split}
 \iOT M(t,x, \xi^\bullet_{\tm} (t,x)))\,dx\,dt&= \sum_{i=1}^{N^t_{1/\tm} }\int_{\Omega} \int_{I_i^\frac{1}{\tm}} M(t,x,  \xi^\bullet_{\tm} (t,x))\, dt\, dx=\\&=\sum_{i=1}^{N_{1/\tm} ^t}\int_{\Omega} \int_{I_i^\frac{1}{\tm}}  \frac{M(t,x, \xi_{\tm}^\bullet (t,x))}{(M_{i,\frac{1}{\tm}})^{**}(x, \xi_{\tm}^\bullet (t,x))}{(M_{i,\frac{1}{\tm}})^{**}(x,\xi^\bullet_{\tm} (t,x))}\,dt\,dx.\end{split}
\end{equation}

We start with estimating from above the fraction on the right--hand side in the previous display by a constant essentially using the balance condition ($\mathcal{M}$), resp.~($\mathcal{M}_p$). For this we note that then for every $i\in\{1,\dots,N^t_{1/\tm} \}$ there exists $k\in\{1,\dots,N^t_{\nu(\mu)}\}$, such that we have 
\[J^{\frac{1}{\tm}}_i\subset {\widehat{I}}^{\nu({\tm})}_{i} \]
with ${\widehat{I}}^{\nu({\tm})}_{i}$ having the properties as $I^\frac{1}{\mu}_i$ described above, but with length $\delta=\nu(\tm)$ and coming from some other division of $[0,T]$ chosen for each $i$.

Moreover, we introduce notation for a family of $N$-dimensional cubes covering the set $\Omega$. Namely, a~family $\{\Qd\}_{j=1}^{N_\delta}$ consists of~closed cubes of edge $2\delta$, such that  $\mathrm{int}\Qd\cap\mathrm{int} Q^\delta_i=\emptyset$ for $i \neq j$ and $\Omega\subset\bigcup_{j=1}^{N_\delta}\Qd$. Moreover, for each cube $\Qd$ we define the cube $\tQd$ centered at the same point and with parallel corresponding edges of length $4\delta$. Denote \[\widehat{M}_{k,j}^{\nu({\tm})}(\xi):=\inf\{M(t,x,\eta):\ \ {t\in {\widehat{I}}^{\nu({\tm})}_{k}\cap[0,T]},\ x\in\wt{Q}_j^{\nu({\tm})}\}.\] Then for a.e. $x\in\wt{Q}_j^{\nu({\tm})}$, $j=1,\dots,N_{1/\tm}$,  and $i=1,\dots,N^t_{1/\tm}$ also
$\widehat{M}_{k,j}^{\nu({\tm})}(\xi)\leq M_{i,\frac{1}{\tm}}(x,\xi)$. Therefore,
\begin{equation}\label{mimij}
  \frac{M(t,x, \eta)}{(M_{i, \frac{1}{\tm}})^{**}(x,\eta)}\leq\frac{M(t,x,\eta)}{(\widehat{M}_{k,j}^{\nu({\tm})})^{**}(\eta)}\leq\Theta\big(\nu({\tm}),|\eta| \big).
\end{equation}
Note that for every $x\in\Omega$ we can choose the cube including it and having the properties needed for the above estimate and the final estimate is uniform with respect to $x$.

On the other hand, since without loss of generality it can be assumed that $\|\xi  \|_{L^\infty(0,T;L^1(\Omega))}\leq 1$, we have
\begin{equation}
\label{xidest}\begin{split}|\xi_{\tm}^\bullet (t,x)|&
\leq   {\tm}\int_{t-\ve(\mu)}^t e^{{\tm}(s-t)} \left|{\xi  (s,x)  }\right|ds \leq  c(\Omega)\tm\|\xi  \|_{L^\infty(0,T;L^1(\Omega))}\leq c(\Omega)\tm.\end{split}
\end{equation}

Then for every $ x \in\Omega$ we can choose a cube $\wt{Q}_j^{\nu(\tm)}$ including $x$. Then, using~\eqref{mimij} and (${\cal M}$), we realize that  for arbitrary $ t \in I_i^\frac{1}{\tm}$ we get
\begin{equation}
\label{M/M<c-t}\frac{M(t,x, \xi^\bullet_{\tm} (t,x))}{(M_{i,\frac{1}{\tm}})^{**}(x,\xi_{\tm}^\bullet (t,x))}\leq  \Theta\left(\nu({\tm}), |\xi_{\tm}^\bullet (t,x)|\right)\leq \Theta\Big(\nu(\tm ), c(\Omega)\tm\Big),
\end{equation} 
where the last inequality is justified by\eqref{M/M<c-t}. Note that we get a bound uniform with respect to $x$.

Let us note that $\Theta$ is nondecreasing with respect to the first variable and thus 
\[\limsup_{\mu\to\infty}\Theta(\nu(\mu),c(\Omega)\mu)=\limsup_{\delta\to0}\Theta( (1+\log^2(c(\Omega)\delta^N))c(\Omega)\delta^N, \delta^{-N}).\]
For all $\delta<\delta_0(N)$ we have
\[\Theta( (1+\log^2(c(\Omega)\delta^N))c(\Omega)\delta^N, \delta^{-N}) \leq \Theta(  \delta , \delta^{-N})<c<\infty,\]
where the last estimate holds due to $(\cal M )$. Analogously, in the case of  $({\cal M}_p )$ by the same arguments we get also $\limsup_{\mu\to\infty}\Theta_p(\nu(\mu),c(\Omega)\mu)<c<\infty$. In any case, we can estimate the right-hand side of~\eqref{M/M<c-t} by $c$ over a cube $\wt{Q}_j^{\nu(\tm)}$. Therefore, in~\eqref{M:div-mult-t} we have
\begin{equation}
\label{M:div-mult-t-cubes}\begin{split}
 \iOT M(t,x, \xi^\bullet_{\tm} (t,x)))\,dx\,dt &\leq  c  \sum_{i=1}^{N_{1/\tm} ^t}\int_{\Omega} \int_{I_i^\frac{1}{\tm}}   {(M_{i,\frac{1}{\tm}})^{**}(x,\xi^\bullet_{\tm} (t,x))}\,dt\,dx.\end{split}
\end{equation} 
 
Using the above estimate in~\eqref{M:div-mult-t}, the Jensen inequality (with intrisctic constant $c_J(\mu)=1/(1-e^{-\mu\ve(\mu)})\leq 1/(1-e^{-1})=c_J(1)=:C_2$), the fact that the second conjugate is (the greatest convex) minorant, and the Young inequality for convolution, we obtain
\[
\begin{split}
 \iOT M(t,x,\xi^\bullet_{\tm}(t,x))\,dx\,dt &\leq
c\sum_{i=1}^{N_\tm^t}\iO \int_{I_i^\frac{1}{\tm}}(M_{i,\frac{1}{\tm}})^{**}
 \left(x,{\tm}
 \int_{t-s-\ve(\mu)}^{t-s}
 e^{{\tm}(s-t)}   \xi(s,x)\,ds
 \right)\,dt\,dx\\
  &\leq  c \sum_{i=1}^{N_\mu^t}\iO \int_{I_i^\frac{1}{\tm}}{\tm}
  \int_{t-s -\ve(\mu)}^{t-s}
 e^{{\tm}(t-s)}(M_{i,\frac{1}{\tm}})^{**}
 \left(x, c_J(\mu)\xi(s,x)\right)\,ds\,dt\,dx\\
 &=  c \sum_{i=1}^{N_\mu^t}\iO \int_{I_i^\frac{1}{\tm}}{\tm}
  \int_{t-s -\ve(\mu)}^{t-s}
 e^{{\tm}(t-s)} M
 \left(t, x, c_J(1) \xi(t,x)\right)\,ds\,dt\,dx\\
 &\leq  c\sum_{i=1}^{N_\mu^t}\iO \|\tm
 e^{{\tm}\cdot}\|_{L^1(-\ve(\mu),0)}\cdot\| M
 \left(\cdot, x, C_2 \xi(\cdot,x) \right)\|_{L^1\left({I}_i^\frac{1}{\mu}\cap [0,T]\right)}\,dx\\
 &\leq  C_1 \| M
 \left(\cdot, \cdot, C_2 \, \xi  \right)\|_{L^1(\OT)},
\end{split}
\]
what was to be proven.
\end{proof}

\begin{proof}[Proof of Theorem~\ref{theo:approx-t}] We extend $\vp\in\VTMi$ by $\vp(0,x)=\vp_0(x)$ on $(-\infty,0)$ and by $0$ on $(T,\infty)$. We shall prove that the sequences we look for are $\{ \vp_{\tm}\}_\tm$  coming from~\eqref{ximu}  and $\{ \vp^\bullet_{\tm}\}_\tm$  coming from~\eqref{xibmu}. Easy computation shows {\it i), ii), iii)}. 

 We concentrate now on showing {\it iv)}, i.e. the modular convergence
\[ \nabla(\vp^\bullet_{\tm})\xrightarrow[{\tm}\to\infty]{M}\nabla\vp\quad\text{ in }\quad L_M(\Omega_T;\rn),\]
 which suffices for $\lim_{\tm\to\infty}\vp^\bullet_\tm=  {\vp}$ strongly in $L^{1}(\Omega_T)$ due to Poincar\'e's inequality in $W^{1,1}_0(\Omega)$.

Let us consider a family of measurable sets  $\{ {E}_n \}_n$  such that $\bigcup_n {E}_n = \OT$ and a  vector valued simple function $ {E}^n(t,x)=\sum_{j=0}^n \mathds{1}_{{E}_j}(t,x) \va_{j}(t,x),$ converging modularly to $\nabla \vp $ with ${\lambda}_4$ (cf.~Definition~\ref{def:convmod}) which exists due to~Lemma~\ref{lem:dens}.    Note that
\[  \nabla (\vp^\bullet_{{\tm} })- \nabla \vp= \left( \nabla (\vp^\bullet_{\tm})-({E}^n)^\bullet_{{\tm} }\right) +( ({E}^n)^\bullet_\mu - {E}^n )
	+ ({E}^n  - \nabla\vp).\]
Convexity of $M(t,x, \cdot)$ implies
	\begin{equation*}
	\begin{split}
	 \iOT M \left(t, x, \frac{ \nabla (\vp^\bullet_{{\tm} })- \nabla \vp }{ \lambda }\right) \,dx dt& \leq	\frac{ {\lambda}_1}{ {\lambda}} \iOT  M\left(t, x,
	\frac{\nabla( \vp^\bullet_{{\tm} })-( {E}^n)^\bullet_{{\tm} }}{  {\lambda}_1} \right) \,dx dt\\
	&\quad +\frac{ {\lambda}_2}{ {\lambda}} \iOT M\left( t,x,
	\frac{( {E}^n)^\bullet_{\mu} - {E}^n }{ {\lambda}_2} \right) \,dx dt+ \frac{ {\lambda}_3}{ {\lambda}} \iOT  M\left( t,x,  \frac{  {E}^n - \nabla  \vp }{ {\lambda}_3} \right) \,dx dt\\
	&=L^{ n, {\tm}}_1+L^{ n, \tm }_2+L^{\tm }_3,
	\end{split}
	\end{equation*}
where $ {\lambda}= \sum_{i=1}^3 {\lambda}_i$, $ {\lambda}_i>0$.  We have $ {\lambda}_3$ fixed already. Let us take $ {\lambda}_1= {\lambda}_3/C_2$.

In order to pass to the limit with ${\tm}\to\infty$, we apply Lemma~\ref{lem:step2prevt}  estimating \[0\leq \lim_{{\tm}\to\infty}L^{ n,{\tm}}_1\leq C_1 L^{ n }_3.\]  
 Furthermore,  Lemma~\ref{lem:dens} implies that $\lim_{n\to\infty}  L^{ n }_3= 0,$  which entails $\lim_{n\to\infty}  \limsup_{{\tm}\to \infty} L^{ n,{\tm}}_1= 0$ as well.

 Let us concentrate on $L^{ n,{\tm}}_2$.
The Jensen  inequality and then the Fubini theorem lead to
	\begin{equation}\label{IE:aw17'}
	\begin{split}
	&\frac{ {\lambda} }{ {\lambda}_2} L^{ n,{\tm}}_2  \\
	& =C\sum_{i=1}^{N_\tm^T}\int_{\Omega }\int_{I_i^\frac{1}{\tm}}
  M \left(t, x, \frac{1}{ {\lambda}_2} \int_{\r} {\tm} e^{{\tm}(s )}\mathds{1}_{(-\infty,0]}(s ) \sum_{j=0}^n [  \mathds{1}_{ {E}_j}(t,x)  \va_j (t,x)- \mathds{1}_{ {E}_j}(s-t,x)  \va_j (s-t,x) ]\,ds \right)\, dt\,dx\\ &
	\leq C
	  \sum_{i=1}^{N_\tm^T}\int_{\Omega }\int_{I_i^\frac{1}{\tm}} {\tm} e^{{\tm}(s )}\mathds{1}_{(-\infty,0]}(s )
	   M \left(t, x, \frac{1}{ {\lambda}_2} \sum_{j=0}^n [   \mathds{1}_{ {E}_j}(t,x)  \va_j (t,x) -\mathds{1}_{ {E}_j}(s-t,x)  \va_j(s-t,x) ]
	   \right) \,ds\,dt\,dx\\ &
	\leq C
	  \int_{\Omega_i}  \sum_{i=1}^{N_\tm^T}\int_{I_i^\frac{1}{\tm}}
	   M \left(t, x, \frac{1}{ {\lambda}_2} \sum_{j=0}^n [   \mathds{1}_{ {E}_j}(t,x)  \va_j (t,x) -\mathds{1}_{ {E}_j}(s-t,x)  \va_j(s-t,x) ]
	   \right) \,dt\,dx.
	\end{split}
	\end{equation}
We let ${\tm}\to\infty$. Notice that using the continuity of the shift operator in $L^1$ we observe that poinwisely
	\[  \sum_{j=0}^n [  \mathds{1}_{ {E}_j}(t,x)  \va_j (t,x)- \mathds{1}_{ {E}_j}(s-t,x)  \va_j(s-t,x)   ]\xrightarrow[{\tm}\to \infty]{} 0,\]
because $s-t<1/{\tm}$. Moreover, when we fix arbitrary $ {\lambda}_2>0$ we have
	\[\begin{split}
	 &M \left(t, x, \frac{1}{ {\lambda}_2} \sum_{j=0}^n [  \mathds{1}_{ {E}_j}(t,x)  \va_j (t,x)- \mathds{1}_{ {E}_j}(s-t,x)  \va_j(s-t,x)   ] \right) \leq \sup_{\eta\in\rn:\ | \eta|=1}M \left(t, x, \frac{1}{  {\lambda}_2} \sum_{j=0}^n|  \va_j|  \eta \right)  <\infty
	 \end{split}\]
and
the Lebesgue Dominated Convergence Theorem provides the right-hand side of \eqref{IE:aw17'} converges to zero.

Passing to the limit completes the proof of modular convergence of the approximating sequence.  

\medskip

Now we are going to show {\it v)}. The $L^\infty$ norm is preserved directly from the formula. Let us  note that
\[ \vp_\mu(t,x)-\vp^\bullet_\mu(t,x)=\mu\int_{-\infty}^{t-\ve(\mu)} e^{\mu(s-t)}\vp(s,x)ds
\]
and consequently, since here assume $\vp\in L^\infty(\OT)$, we have \[ \|\vp_\mu-\vp_\mu^\bullet\|_{L^\infty(\Omega_T)}\leq 
\|\vp \|_{L^\infty(\OT)}e^{-\mu\ve(\mu)}=
\|\vp \|_{L^\infty(\OT)}e^{-\log^2\mu}\xrightarrow[\mu\to\infty]{}0.\]
To justify that for every $\mu$ and a.e. $x\in\Omega$ function $\vp_\mu^\bullet(\cdot,x)\in W^{1,\infty}([0,T))$ we use Young's inequality for convolution of measures. Indeed, for every $\mu$ function $\pa_t\vp_\mu^\bullet(\cdot,x)$ has bounded total variation, because its accummulation points have finite mass.

Moreover, direct computation shows that
\[ \|\pa_t\big(\vp_\mu(\cdot,x)-\vp_\mu^\bullet(\cdot,x)\big)\|_{L^\infty(0,T)} \leq  
2\|\vp \|_{L^\infty(\OT)} \mu e^{-\mu\ve(\mu)}=
2\|\vp \|_{L^\infty(\OT)}e^{\log\mu(1-\log\mu)}\xrightarrow[\mu\to\infty]{}0\]
 uniformly for $x\in\Omega$, which completes the proof.
\end{proof}

\section{The proof of main results -- existence of renormalized solutions} \label{sec:main proof}
 In this section we provide proofs of the main goals -- existence of renormalized solutions Theorem~\ref{theo:aniso} and its special (isotropic) case~\ref{theo:main0}. It is obtained by steps. Recall that existence to bounded data problem is already given in Theorem~\ref{theo:bound}. We start with a priori estimates for truncations of solutions to the sequence of solutions to bounded-data problems. The use of truncation method at this stage is already classical and dates back to pioneering papers of~\cite{bgSOLA,boc-g-d-m,bbggpv}.

\subsection{Convergence of truncations $T_k (u_n)$}
\begin{prop}\label{prop:convTk} Suppose $[0,T]$ is a finite interval, $\Omega$ is a bounded Lipschitz domain in $ \rn$,   $N>1$,  $f\in L^1(\OT)$, $u_0\in L^1(\Omega)$, function $A$ satisfy assumptions ($\mathcal{A}$\ref{A1})--($\mathcal{A}$\ref{A3}) with an $N$-function~$M$ satisfying ($\mathcal{M}$) or ($\mathcal{M}_p$). Let  $u_n \in\VTMi$ denote a weak solution to the problem~\eqref{eq:bound}. Let $k>0$ be arbitrary. Then there exists a measurable function $u$, such that $T_k(u)\in\VTM$  and 
\begin{eqnarray} T_k(u_n) &\xrightharpoonup{} &  T_k(u)\quad \text{ in } L^1(0,T;W^{1,1}_0(\Omega)),\label{conv:n:TkuL1}\\
T_k(u_n) &\xrightharpoonup{*} &  T_k(u)\quad \text{weakly-* in } L^\infty (\OT),\label{conv:n:TkuLi}\\
\nabla T_k(u_n) &\xrightharpoonup* &\nabla T_k(u)\quad \text{weakly-* in } L_M(\OT;\rn),\label{conv:n:TkutM}\\
 A(t,x,\nabla T_k(u_n)) &\xrightharpoonup*& \Ak \quad \text{weakly-* in } L_{M^*}(\OT;\rn),\label{conv:n:ATkutMs}
\end{eqnarray}
for some $\Ak\in L_{M^*}(\OT;\rn)$.
\end{prop}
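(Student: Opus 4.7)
The plan is to derive modular a priori bounds for $T_k(u_n)$ and $A(t,x,\nabla T_k(u_n))$, extract weak-* limits, and then identify them with gradients of a single measurable function $u$ assembled from the consistent family of truncation limits.

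First I would test the weak formulation \eqref{weak-reg} with $T_k(u_n)$. Since $u_n$ has only distributional time derivative and the space is not reflexive, direct testing is illegitimate; to make it rigorous I would use the integration-by-parts formula of Theorem~\ref{theo:intbyparts} (or, equivalently, the time-regularization of Theorem~\ref{theo:approx-t}) with $h\in W^{1,\infty}(\r)$ approximating $\chi_{|s|\le k}$ and $\Theta_k(s):=\int_0^s T_k(r)\,dr$. Combined with the coercivity half of ($\mathcal{A}$\ref{A2}), this yields
\[
\sup_{t\in[0,T]}\int_\Omega\Theta_k(u_n(t,x))\,dx+\int_{\OT}M(t,x,\nabla T_k(u_n))\,dx\,dt\le k\bigl(\|f\|_{L^1(\OT)}+\|u_0\|_{L^1(\Omega)}\bigr).
\]
A parallel renormalized test with $h_\lambda$ approximating $\sg$ gives the uniform $L^\infty(0,T;L^1(\Omega))$ bound on $u_n$, and hence the standard decay estimate $|\{|u_n|>k\}|\le C/k$. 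The companion modular bound on the flux,
$\int_{\OT}M^*(t,x,A(t,x,\nabla T_k(u_n)))\,dx\,dt\le C(k)$,
is then immediate from the second inequality in ($\mathcal{A}$\ref{A2}), since $\nabla T_k(u_n)$ vanishes on $\{|u_n|>k\}$.

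Next, the two modular estimates translate into uniform Luxemburg-norm bounds, so Banach--Alaoglu applied in the dualities $L_M=(E_{M^*})^*$ and $L_{M^*}=(E_M)^*$ delivers a diagonal subsequence and objects $\xi_k\in L_M(\OT;\rn)$, $\Ak\in L_{M^*}(\OT;\rn)$ with $\nabla T_k(u_n)\rightharpoonup^*\xi_k$ and $A(\cdot,\cdot,\nabla T_k(u_n))\rightharpoonup^*\Ak$. To upgrade this to a.e.\ convergence of $T_k(u_n)$, I would use that a locally integrable $N$-function yields $L_M\hookrightarrow L^1$ on the bounded set $\OT$, so $T_k(u_n)$ is bounded in $L^1(0,T;W_0^{1,1}(\Omega))\cap L^\infty(\OT)$; from the equation, $\partial_t T_k(u_n)$ is bounded in a sum of $L^1(\OT)$ and a negative-order Sobolev space, so the classical Aubin--Lions--Simon theorem provides, up to a further subsequence, $T_k(u_n)\to v_k$ strongly in $L^1(\OT)$ and a.e.

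It remains to construct $u$ and identify the limits. The family $\{v_k\}$ is consistent, $T_j(v_k)=v_j$ for $j\le k$, by continuity of $T_j$ under strong $L^1$-convergence; together with the uniform $|\{|u_n|>k\}|\le C/k$, this lets me set $u(t,x):=\lim_{k\to\infty}v_k(t,x)$ a.e., which is measurable, finite a.e., and belongs to $L^\infty(0,T;L^1(\Omega))$, with $v_k=T_k(u)$ a.e. Testing the weak-* convergence $\nabla T_k(u_n)\rightharpoonup^*\xi_k$ against $\vp\in C_c^\infty(\OT)\subset E_{M^*}$ and using $T_k(u_n)\to T_k(u)$ in $L^1$, the distributional gradient commutes with the limit, so $\xi_k=\nabla T_k(u)\in L_M$, i.e.\ $T_k(u)\in\VTM$. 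This gives \eqref{conv:n:TkutM}; \eqref{conv:n:TkuL1}--\eqref{conv:n:TkuLi} follow from the strong $L^1$-convergence and $L^\infty$-bound, and \eqref{conv:n:ATkutMs} is already proved.

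The principal obstacle is Step~1: making $T_k(u_n)$ admissible as a test function in the non-reflexive, time-dependent modular setting, where Steklov averaging is unavailable and the chain rule for $\partial_t u_n$ is non-trivial. This is precisely what Theorem~\ref{theo:intbyparts} (supported by Theorem~\ref{theo:approx-t}) is designed to overcome. A secondary point is the compactness argument of Step~3, which relies crucially on the embedding $L_M\hookrightarrow L^1$ to reduce to a Bochner setting where classical Aubin--Lions compactness applies; at no point in this proposition is the comparison principle invoked, as the identification of $\Ak$ with $A(t,x,\nabla T_k(u))$ and the a.e.\ convergence of $u_n$ itself are deferred to the later propositions of the paper.
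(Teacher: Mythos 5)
Your first step reproduces the paper's argument exactly: Theorem~\ref{theo:intbyparts} applied with $h=T_k$ (and a mollified-in-time cut-off), combined with ($\mathcal{A}$\ref{A2}), gives precisely the modular a priori bounds $\int_{\OT}M(t,x,\nabla T_k(u_n))\,dx\,dt\leq w_2(k)$ and $c_A\int_{\OT}M^*(t,x,A(t,x,\nabla T_k(u_n)))\,dx\,dt\leq w_2(k)$, and the weak-$*$ extraction in the dualities $L_M=(E_{M^*})^*$, $L_{M^*}=(E_M)^*$ is how the paper obtains \eqref{conv:n:TkutM}--\eqref{conv:n:ATkutMs}. Where you diverge is the identification of the limits as truncations of a single $u$: you route this through strong $L^1$/a.e.\ convergence of $T_k(u_n)$ obtained from Aubin--Lions--Simon, whereas the paper defers the a.e.\ convergence of $u_n$ to Proposition~\ref{prop:contr:rad}, where it is derived from the comparison principle (Theorem~\ref{prop:comp-princ}) applied to the monotone family $u^{k,l}$ of solutions with asymmetrically truncated data. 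The paper states explicitly in the introduction that Aubin--Lions is among the classical tools that are \emph{not} available once the growth restrictions on $M$ are dropped.

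The concrete gap is your claim that ``from the equation, $\partial_t T_k(u_n)$ is bounded in a sum of $L^1(\OT)$ and a negative-order Sobolev space.'' The equation controls $\partial_t u_n$, not $\partial_t T_k(u_n)$; there is no chain rule through the non-smooth $T_k$ for a merely distributional time derivative, and this is one of the central technical obstructions the whole renormalization machinery is built to avoid. Moreover, even for $\partial_t u_n$ itself the bound is not uniform in $n$: the a priori estimates control only the truncated flux $A(t,x,\nabla T_k(u_n))$, while the full flux $A(t,x,\nabla u_n)=\dv^{-1}(\partial_t u_n-T_nf)$ has a modular bound that degenerates as $n\to\infty$ (it comes from testing with $u_n$ against data truncated at level $n$). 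Repairing this would require passing to smooth renormalizations $S(u_n)$ with $\supp S'$ compact, writing $\partial_t S(u_n)=\dv(S'(u_n)A(t,x,\nabla T_R(u_n)))-S''(u_n)A(t,x,\nabla u_n)\cdot\nabla u_n+S'(u_n)T_nf$ and controlling the middle term by the energy estimate on $\{|u_n|<R\}$ --- i.e.\ essentially rebuilding the classical Blanchard--Murat/Porretta compactness argument, which is not what you wrote and is not the path the paper takes. As stated, your construction of $u$ and the identification $\xi_k=\nabla T_k(u)$ therefore rest on an unjustified step; the weak-$*$ limits \eqref{conv:n:TkutM}--\eqref{conv:n:ATkutMs} survive, but the existence of the single measurable $u$ with $v_k=T_k(u)$ does not follow from your argument without importing the comparison-principle step from the subsequent propositions.
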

\begin{proof} Our aim is to apply integration-by-parts formula from Theorem~\ref{theo:intbyparts}   to $u_n$ being a weak solution to~\eqref{eq:bound} with a~special choice of the functions involved therein.  We already know that $T_k(u_n)\in V_T^M$ and $u_n(t,x)\in L^\infty([0,T];L^1(\Omega))$. Let two-parameter family of functions $\vt:\R\to\R$ be defined by
\begin{equation}
\label{vt}\vt(t):=\left(\omega_r * \mathds{1}_{[0,\tau)}\right)(t),
\end{equation}where $\omega_r$ is a standard regularizing kernel, that is $\omega_r\in C_c^\infty(\R)$, $\supp\,\omega_r\subset (-r,r)$. Note that $\supp\,\vt=[-r,\tau+r).$ In particular, for every $\tau$ there exists $r_\tau$, such that for all $r<r_\tau$ we have $\vt\in C_c^\infty([0,T))$.

We use  Theorem~\ref{theo:intbyparts}  with $A=A(t,x,\nabla u_n)$, $F=T_n(f)$, $h(\cdot)=T_k(\cdot)$, and $\xi(t,x)=\vt(t),$  we obtain
\[\begin{split}
 -\int_{\OT} \left(\int_{u_{0,n}(x)}^{u_n(t,x)} T_k(\s)d\s\right) \partial_t (\vt)   \,dx\,dt+\int_{\OT}A(t,x,\nabla u_n)\cdot \nabla (T_k(u_n)\vt) \,dx\,dt =\int_{\OT}T_n(f) T_k(u_n)\vt \,dx\,dt.\end{split}
\]

When we pass to the limit with ${r}\to 0$, for a.e. $\tau\in [0,T]$ we get
\[\begin{split}
 \int_{\Omega } \left(\int_{0}^{u_n(\tau,x)} T_k(\s)d\s-\int_{0}^{u_{0,n}(x)} T_k(\s)d\s\right)  \,dx +\int_{\Omega_\tau}A(t,x,\nabla u_n) \cdot \nabla  T_k(u_n) \,dx\,dt
 = \int_{\Omega_\tau}T_n(f) T_k(u_n)  \,dx\,dt,\end{split}
\]
and consequently
\[\begin{split}
 \frac{1}{2}\|T_k \left( u_n(\tau)\right)\|^2_{L^2(\Omega)} -\frac{1}{2}\|T_k \left( u_{0,n}\right)\|^2_{L^2(\Omega)}+\int_{\Omega_\tau}A(t,x,\nabla T_k (u_n)) \cdot \nabla T_k(u_n) \,dx\,dt= \int_{\Omega_\tau}T_n(f) T_k(u_n)\,dx\,dt.\end{split}\]
Recall that (${\cal A}$2) results in
\[c_AM^*(t,x,\nabla T_k (u_n))\leq M(t,x,\nabla T_k (u_n))\leq A(t,x,\nabla T_k (u_n))\nabla T_k (u_n).\] 
 Therefore we get
\[\begin{split}
&\frac{1}{2}\|T_k \left( u_n (\tau)\right)\|^2_{L^2(\Omega)} -\frac{1}{2}\|T_k \left( u_{0,n}\right)\|^2_{L^2(\Omega)}+\int_{\Omega_\tau}  M(t,x,\nabla  T_k (u_n)) \,dx\,dt \leq k\|f\|_{L^1(\OT)}  .\end{split}\]
When we notice that $\|T_k \left( u_{0,n}\right)\|^2_{L^2(\Omega)}\leq k\|  u_{0,n} \|_{L^1(\Omega)} = k\|  T_n(u_{0}) \|_{L^1(\Omega)}$, since $\tau\in( 0,T)$ is arbitrary, fixing \[w_2(k):= k\left(\|f\|_{L^1(\OT)}+\frac{1}{2}\|  u_{ 0} \|_{L^1(\Omega)}\right),\]  we infer
\begin{equation}\begin{split}
\int_{\Omega_\tau}  M(t,x,\nabla  T_k (u_n)) \,dx\,dt&\leq w_2(k),\\
c_A\int_{\Omega_\tau}  M^*(t,x, A (t,x, \nabla T_k (u_n)))\,dx\,dt&\leq w_2(k).\label{apriori}\end{split} 
\end{equation}
The weak lower semi-continuity of~a~convex functional together with the above a priori estimates imply existence of $u\in \VTM$ such that \eqref{conv:n:TkuL1},\eqref{conv:n:TkuLi},\eqref{conv:n:TkutM} hold, and existence of $\Ak$ such  that~\eqref{conv:n:ATkutMs} holds.\end{proof}

\subsection{Decay condition }

 \begin{prop}\label{prop:contr:rad:n}
Suppose $[0,T]$ is a finite interval, $\Omega$ is a bounded Lipschitz domain in $ \rn$,   $N>1$,  $f\in L^1(\OT)$, $u_0\in L^1(\Omega)$, function $A$ satisfy assumptions ($\mathcal{A}$\ref{A1})--($\mathcal{A}$\ref{A3}) with an $N$-function~$M$ satisfying ($\mathcal{M}$) or ($\mathcal{M}_p$). Assume further that $u_n$ is a weak solution to~\eqref{eq:bound}, $n>0$. Then
\begin{equation}
\lim_{l\to\infty} |\{|u_n|>l\}|=0.\label{conv:umeas:n}
\end{equation}
and \begin{equation}
 \label{eq:contr:rad:n}
 \lim_{l\to\infty}\limsup_{n\to\infty}\int_{\{l<|u_n|<l+1\}} A(t,x,\nabla u_n)\nabla u_n\,dx\,dt=0.
 \end{equation}
\end{prop}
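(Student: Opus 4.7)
The plan is to deduce the uniform-in-$n$ decay of level sets directly from the energy inequality derived in Proposition~\ref{prop:convTk}, and then to obtain the radiation bound by testing the sequence of bounded-data equations against a truncation-type Lipschitz function concentrated on the dyadic annulus $\{l<|u_n|<l+1\}$, following the classical strategy of B\'enilan--Boccardo--Gallou\"et--Gariepy--Pierre--V\'azquez.

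\textbf{Step 1 (uniform decay of level sets).} From the computation inside the proof of Proposition~\ref{prop:convTk} one already has $\|T_k(u_n)\|_{L^\infty(0,T;L^2(\Omega))}^{2}\leq C\,k$ uniformly in $n$. A slicewise Chebyshev inequality then gives $|\{x:\,|u_n(t,x)|>l\}|\leq C/l$ for a.e.~$t\in(0,T)$, and integration in $t$ yields $|\{|u_n|>l\}|\leq CT/l\to 0$ uniformly in $n$, which is \eqref{conv:umeas:n}.

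\textbf{Step 2 (truncated test function).} For \eqref{eq:contr:rad:n}, set $h_l(s):=T_1(s-T_l(s))$: it belongs to $W^{1,\infty}(\R)$, vanishes on $[-l,l]$, is bounded by $1$, and has $h_l'=\chi_{\{l<|\cdot|<l+1\}}$ with compact support. Its antiderivative $\Psi_l(s):=\int_0^s h_l(\sigma)\,d\sigma$ is nonnegative, vanishes on $[-l,l]$, and satisfies $\Psi_l(s)\leq|s|$. Mirroring the scheme of the proof of Proposition~\ref{prop:convTk}, I would invoke the integration-by-parts identity of Theorem~\ref{theo:intbyparts} applied to the weak solution $u_n$ with $A=A(t,x,\nabla u_n)\in L_{M^*}(\OT;\rn)$ (which lies in $L_{M^*}$ by $(\mathcal{A}\ref{A2})$ and the a priori estimate $\int_{\OT}M(t,x,\nabla u_n)<\infty$), $F=T_n(f)$, $h=h_l$, and $\xi=\vt$, and pass $r\to 0^+$ to obtain, for $\tau=T$,
\[\int_\Omega\Psi_l(u_n(T))\,dx-\int_\Omega\Psi_l(T_n(u_0))\,dx+\int_{\{l<|u_n|<l+1\}}A(t,x,\nabla u_n)\cdot\nabla u_n\,dx\,dt=\int_{\OT}T_n(f)\,h_l(u_n)\,dx\,dt.\]

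\textbf{Step 3 and main obstacle.} Dropping the nonnegative first term, exploiting $u_{0,n}=T_n(u_0)$ together with $\Psi_l(T_n(u_0))\leq|u_0|\chi_{\{|u_0|>l\}}$, and using $|h_l(u_n)|\leq\chi_{\{|u_n|>l\}}$, the identity yields
\[\int_{\{l<|u_n|<l+1\}}A(t,x,\nabla u_n)\cdot\nabla u_n\,dx\,dt\leq\int_{\{|u_0|>l\}}|u_0|\,dx+\int_{\{|u_n|>l\}}|f|\,dx\,dt.\]
The first term vanishes as $l\to\infty$ by dominated convergence ($u_0\in L^1(\Omega)$), and the second does so uniformly in $n$ by Step~1 combined with absolute continuity of the finite measure $|f|\,dx\,dt$ on $\OT$; together these give \eqref{eq:contr:rad:n}. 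The delicate point is not the arithmetic but the correct invocation of Theorem~\ref{theo:intbyparts} in this time-dependent modular setting, including the passage $r\to 0^+$ which recovers the trace $u_n(T)$; this is precisely what the balance conditions $(\mathcal{M})$/$(\mathcal{M}_p)$ and the approximation machinery of Section~\ref{sec:approximation} were introduced to handle, while everything else parallels manipulations already carried out in Proposition~\ref{prop:convTk}.
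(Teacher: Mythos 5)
Your proposal is correct, and the second half (the radiation estimate) is essentially the paper's own argument: your $h_l(s)=T_1(s-T_l(s))$ is exactly the paper's $G_l(s)=T_{l+1}(s)-T_l(s)$, and the chain ``drop the nonnegative time term, bound the initial-datum term by $\int_{\{|u_0|>l\}}|u_0|$, bound the source term by $\int_{\{|u_n|>l\}}|f|$ and use equi-integrability of $f$ together with \eqref{conv:umeas:n}'' is precisely what the paper does. The only procedural difference there is that the paper tests the weak formulation directly with $G_l(u_n)\phi_r$ and exploits $\partial_t\phi_r\le 0$ to discard the time term without ever evaluating a trace at $\tau=T$, whereas you invoke Theorem~\ref{theo:intbyparts} and send $\tau\to T$; your route is fine provided you take $\tau$ along Lebesgue points (or simply note that $\int_{\Omega_\tau}\chi_{\{l<|u_n|<l+1\}}A\cdot\nabla u_n$ is nondecreasing in $\tau$, so the a.e.-$\tau$ inequality suffices). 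Where you genuinely diverge is Step~1: the paper proves \eqref{conv:umeas:n} via the modular Poincar\'e inequality (Theorem~\ref{theo:Poincare}) applied to $\underline{m}(s)=\inf_{(t,x),|\xi|=s}M(t,x,\xi)$ together with the a priori bound \eqref{apriori}, obtaining $|\{|u_n|\ge l\}|\le C\,l/\underline{m}(c^1_P l)\to 0$ by superlinearity of $\underline{m}$; you instead use the $L^\infty(0,T;L^2)$ estimate $\|T_l(u_n)\|^2_{L^\infty(L^2)}\le Cl$ already established in Proposition~\ref{prop:convTk} and a slicewise Chebyshev inequality, getting the explicit rate $CT/l$. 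Your argument is more elementary and avoids the Poincar\'e machinery entirely; the paper's version is the one that generalizes when no $L^\infty(L^2)$ control is available. Both are valid here, so there is no gap.
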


\begin{proof} To prove~\eqref{conv:umeas:n} we consider   $\dm(s):= \inf_{(t,x)\in\OT,\,\xi:|\xi|=s} M(t,x,\xi)$ and $c^1_P,c^2_P>0$ being constants from Poincar\'e inequality (Theorem~\ref{theo:Poincare})  for which we have
\[|\{|u_n|\geq l\}|=|\{|T_l(u_n)|= l\}|=|\{|T_l(u_n)|\geq l\}|=|\{\dm(c^1_P |T_l(u_n)|)\geq \dm(c^1_P l)\}|.\]
Moreover, for $l>1$ we have
\begin{equation*}
\begin{split}
|\{|u_n|\geq l\}|&\leq \int_{\OT} \frac{\dm(c^1_P|T_l(u_n)|)}{\dm(c^1_P l)}\, dx\,dt\leq \frac{c(N,\Omega,T)}{\dm(c^1_P l)} \int_\OT c^2_P \dm( |\nabla T_l(u_n)|)\,dx\,dt\\
&\leq    \frac{c(N,\Omega,T)}{\dm(c^1_P l)} \int_\OT M(t,x,  \nabla T_l(u_n) )dx\,dt  \\&
\leq \frac{C(M,N,\Omega,T)}{\dm(c^1_P l)}  \cdot l \left(\|f\|_{L^1(\OT)}+\frac{1}{2}\|  u_{ 0} \|^2_{L^2(\Omega)}\right)\\
& \leq C(f,u_0,M,N,\Omega,T)\frac{l}{\dm(c^1_P l)}  \xrightarrow[ l\to \infty]{}0
.\end{split}\end{equation*}
In the above estimates we applied the Chebyshev inequality, the Poincar\'{e} inequality (Theorem~\ref{theo:Poincare}), a~priori estimate~\eqref{apriori}, and the facts that $f\in  L^1(\OT),$ $u_0\in L^2(\Omega)$ and that $\dm$ is an $N$-function.

To prove~\eqref{eq:contr:rad:n}, we consider a family of nonincreasing functions $\phi_{r}\in C_c^\infty([0,T))$, such that
\[\phi_{r}(t):=\left\{\begin{array}{ll}
0& \text{for }t\in[T-{r},T],\\
1& \text{for }t\in[0,T-2{r}],
\end{array}\right.\qquad \text{and}\qquad G_l(s):=T_{l+1}(s)-T_{l}(s).\]

Since $u_n\in \VTMi$ is a weak solution to~\eqref{eq:bound}, we can use $\vp(t,x)=G_l(u_n(t,x))\phi_{r}(t)$ as a test function to get
\begin{equation*}
\iOT (\partial_t u_n) G_l(u_n) \phi_{r} \, dx\,dt+\int_{\{l<|u_n|<l+1\}} A(t,x,\nabla u_n)\nabla u_n\,\phi_{r}\, dx\,dt=\iOT fG_l(u_n) \phi_{r} \, dx\,dt.
\end{equation*}
Notice that on the left-hand side above we have
\[\begin{split}
\int_0^T (\partial_t u_n) G_l(u_n) \phi_{r} \,dt&=
\int_0^T \partial_t \left(\int_0^{u_n} G_l(s)ds\right)\, \phi_{r} \,dt =
-\phi_{r}(0)\int_0^{u_{0,n}}  G_l(s)ds -
\int_0^T  \int_0^{u_n} G_l(s)ds \,\partial_t \phi_{r} \,dt.\end{split}\]
Moreover, $\int_0^{u_n} G_l(s)ds\geq 0$ and $\partial_t \phi_{r}\leq 0$, hence \[-\int_0^T  \int_0^{u_n} G_l(s)ds \,\partial_t \phi_{r} \,dt\geq 0\]
and consequently
\[ \int_{\{l<|u_n|<l+1\}} A(t,x,\nabla u_n)\nabla u_n\,\phi_{r}\, dx\,dt\leq \iOT fG_l(u_n) \phi_{r} \, dx\,dt+\iO \phi_{r}(0)\int_0^{u_{0,n}}  G_l(s)ds\,dx.\]

Furthermore, to infer that the right-hand side above tends to zero when $l\to\infty$, it suffices to~observe that
\[\iO\int_0^{|u_{0,n}|} |G_l(s)|\,ds\,dx\leq \iO\int_0^{|u_{0,n}|} \mathds{1}_{\{s>l\}}\,ds\,dx=\int_{\{||u_{0,n}|-l|>0\}} \left(|u_{0,n}|-l\right) dx \xrightarrow[l\to\infty]{}0\]
and
\[\iOT fG_l(u_n) \phi_{r} \, dx\,dt\leq \int_{\{|u_n|>l\}} |f|\, dx\,dt\xrightarrow[l\to\infty]{}0.\]
Therefore,~\eqref{eq:contr:rad:n} follows.\end{proof}

\subsection{Almost everywhere limit}

 \begin{prop}\label{prop:contr:rad}
Suppose $[0,T]$ is a finite interval, $\Omega$ is a bounded Lipschitz domain in $ \rn$,   $N>1$,  $f\in L^1(\OT)$, $u_0\in L^1(\Omega)$, function $A$ satisfy assumptions ($\mathcal{A}$\ref{A1})--($\mathcal{A}$\ref{A3}) with an $N$-function~$M$ satisfying ($\mathcal{M}$) or ($\mathcal{M}_p$). Assume further that $u_n$ is a weak solution to~\eqref{eq:bound}.  For the  function $u \in \VTM $  coming from Proposition~\ref{prop:convTk} we have
\begin{equation}
 u_n\to u \quad  a.e.\ \text{in}\ \OT,\label{conv:usae}\end{equation}
and
\begin{equation}
\lim_{l\to\infty} |\{|u|>l\}|=0.\label{conv:umeas}
\end{equation}
\end{prop}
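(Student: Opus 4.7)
}
The plan is to show that $\{u_n\}$ is a Cauchy sequence in $L^1(\Omega_T)$, which yields convergence in measure and hence a.e.\ convergence along a subsequence. The limit will be identified with the function $u$ produced in Proposition~\ref{prop:convTk} by uniqueness of the limit against the weak-$*$ convergence of truncations.

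\textbf{Step 1: an $L^1$-contraction between $u_n$ and $u_m$.} For $n,m\in\N$ the weak solutions $u_n,u_m$ from Theorem~\ref{theo:bound} fall into the scope of the comparison principle of Theorem~\ref{prop:comp-princ} (after noting that, for bounded data, the weak solution constructed in Theorem~\ref{theo:bound} satisfies ($\mathcal{R}$\ref{R1})--($\mathcal{R}$\ref{R3})). Applying Theorem~\ref{prop:comp-princ} to the pair with data $(\max(T_n(f),T_m(f)),\max(T_n(u_0),T_m(u_0)))$ and $(\min(T_n(f),T_m(f)),\min(T_n(u_0),T_m(u_0)))$ sandwiches $u_n$ and $u_m$ between two solutions whose data differ by $|T_n(f)-T_m(f)|$ and $|T_n(u_0)-T_m(u_0)|$. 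A quantitative version (obtained by testing the difference of the two equations with a truncated sign function $\sigma_\delta(u_n-u_m)$ within the admissible class from Theorem~\ref{theo:intbyparts}, discarding the diffusion term thanks to weak monotonicity ($\mathcal{A}$\ref{A3}), and letting $\delta\to 0^+$) yields
\[
\sup_{t\in[0,T]}\|u_n(t,\cdot)-u_m(t,\cdot)\|_{L^1(\Omega)}\leq \|T_n(u_0)-T_m(u_0)\|_{L^1(\Omega)}+\|T_n(f)-T_m(f)\|_{L^1(\Omega_T)}.
\]

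\textbf{Step 2: Cauchy property and a.e.\ convergence.} Since $|T_n(f)|\leq|f|\in L^1(\Omega_T)$ and $T_n(f)\to f$ pointwise (and analogously for $u_0$), the dominated convergence theorem gives that $\{T_n(f)\}$ and $\{T_n(u_0)\}$ are Cauchy in $L^1$. By Step~1, $\{u_n\}$ is Cauchy in $L^\infty(0,T;L^1(\Omega))$, hence in $L^1(\Omega_T)$, so $u_n\to u_\ast$ in $L^1(\Omega_T)$ for some $u_\ast$, and along a subsequence $u_n\to u_\ast$ a.e.\ in $\Omega_T$. Since $T_k$ is continuous and bounded, $T_k(u_n)\to T_k(u_\ast)$ a.e.\ and in every $L^q(\Omega_T)$, $q<\infty$. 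Comparing with~\eqref{conv:n:TkuLi} in Proposition~\ref{prop:convTk} we conclude $T_k(u_\ast)=T_k(u)$ for every $k>0$, whence $u_\ast=u$ a.e.\ and \eqref{conv:usae} follows.

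\textbf{Step 3: decay of superlevel sets of $u$.} Inspecting the proof of Proposition~\ref{prop:contr:rad:n}, the bound $|\{|u_n|\geq l\}|\leq C(f,u_0,M,N,\Omega,T)\,l/\dm(c^1_P l)$ is uniform in $n$, since the a priori estimate \eqref{apriori} is uniform in $n$. Applying Fatou's lemma to the sequence $\chi_{\{|u_n|>l\}}\to\chi_{\{|u|>l\}}$ (using a.e.\ convergence from Step~2) yields
\[
|\{|u|>l\}|\leq \liminf_{n\to\infty}|\{|u_n|>l\}|\leq \sup_{n}|\{|u_n|>l\}|\xrightarrow[l\to\infty]{}0,
\]
proving~\eqref{conv:umeas}.

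\textbf{Main obstacle.} The delicate point is Step~1. The comparison principle as stated in Theorem~\ref{prop:comp-princ} is qualitative; upgrading it to a quantitative $L^1$-contraction requires testing the difference equation with (an admissible approximation of) $\mathrm{sgn}(u_n-u_m)$. In the present fully anisotropic non-reflexive setting, admissibility of such test functions and passage to the limit rely on the absence of Lavrentiev's phenomenon guaranteed by $(\mathcal{M})$/$(\mathcal{M}_p)$, the integration-by-parts formula of Theorem~\ref{theo:intbyparts}, and the monotonicity trick (Lemma~\ref{lem:mon}) together with the uniform a priori bounds \eqref{apriori} for the truncations, so that the diffusion contribution is non-negative and can be discarded.
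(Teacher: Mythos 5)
Your overall architecture ($L^1$-contraction $\Rightarrow$ Cauchy in $L^1(\Omega_T)$ $\Rightarrow$ a.e.\ convergence, then identification with $u$ via \eqref{conv:n:TkuLi}, then \eqref{conv:umeas} from the uniform bound of Proposition~\ref{prop:contr:rad:n} by Fatou) is a legitimate strategy, and your Steps 2--3 would be fine if Step 1 held. But Step 1 is a genuine gap, and you have correctly diagnosed it yourself: the quantitative estimate $\sup_t\|u_n(t)-u_m(t)\|_{L^1(\Omega)}\leq\|T_n(u_0)-T_m(u_0)\|_{L^1}+\|T_n(f)-T_m(f)\|_{L^1}$ is nowhere available in the paper, and the tools provided do not yield it directly. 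The integration-by-parts formula (Theorem~\ref{theo:intbyparts}) applies to a \emph{single} equation and requires test functions $\xi$ with $\partial_t\xi\in L^\infty(\Omega_T)$ and compact support in $[0,T)$; a truncated sign of $u_n-u_m$ satisfies neither requirement, and there is no chain-rule/duality statement in this non-reflexive, time-dependent setting that lets you pair $\partial_t(u_n-u_m)$ with $\sigma_\delta(u_n-u_m)$. Labelling this the ``main obstacle'' does not discharge it: the entire proof hinges on it. A secondary issue: even granting Step 1, $L^1$-convergence gives a.e.\ convergence only along a subsequence, whereas \eqref{conv:usae} asserts convergence of the full sequence.

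The paper sidesteps all of this with a purely order-theoretic argument that uses only the \emph{qualitative} comparison principle (Theorem~\ref{prop:comp-princ}), which is applicable because weak solutions of the bounded-data problems are renormalized ones by Theorem~\ref{theo:intbyparts}. One introduces asymmetric truncations $T^{k,l}$ and the corresponding solutions $u^{k,l}$ with data $T^{k,l}(f)$, $T^{k,l}(u_0)$; comparison gives $u^{k',l}\leq u^{k,l}\leq u^{k,l'}$ for $k<k'$, $l<l'$, so $u^{k,l}$ is monotone in each index and the iterated pointwise limits $u^{k,\infty}$ and $u^{\infty,\infty}$ exist a.e.\ (monotone limits always exist, no compactness or contraction needed). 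Sandwiching the diagonal $u_n=u^{n,n}$ then yields a.e.\ convergence of the full sequence, and the limit is identified with $u$ through the uniqueness of the weak limit in \eqref{conv:n:TkuL1}; finally \eqref{conv:umeas} follows from \eqref{conv:umeas:n}. If you want to keep your route, you must actually prove the $L^1$-contraction (e.g.\ by a doubling-of-variables or a carefully justified $\frac1\delta T_\delta(u_n-u_m)$ test, each of which requires new lemmas in this framework); otherwise the monotone-family argument is the economical path.
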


\begin{proof}  To prove~\eqref{conv:umeas} we apply the comparison principle (Proposition~\ref{prop:comp-princ}). We can do it since by Theorem~\ref{theo:intbyparts} weak solutions $u_n$ are renormalized ones. We define asymmetric truncations as follows\[T^{k,l}(f)(x)=\left\{\begin{array}{ll}
-k& f\leq -k,\\
f & |f|\leq k,\\
l&  f \geq l.
\end{array}\right.\]
 Let $u^{a,b}$ denote  a weak solution to \[u_t-{\dv }A(t,x,\nabla u )= T^{a,b}(f),\qquad u(0,x)=T^{a,b}(u_0),\] which exists according to Theorem~\ref{theo:bound}. For $0<l<l'$ and $0<k<k'$, comparison principle (Proposition~\ref{prop:comp-princ}) implies that
\begin{equation}
\label{order}
u^{k',l}\leq u^{k,l}\leq u^{k,l'}
\end{equation}
for a.e. $(t,x)\in\OT$. Due to the monotonicity of $(u^{k,l})_l$ we deduce that $\lim_{l\to\infty} u^{k,l}$ exists a.e. in~$\OT$. Let us denote it by $u^{k,\infty}$. On the other hand, taking into account~\eqref{order} we infer that $u^{k',\infty}\leq u^{k,\infty}$ a.e. in~$\OT$. Thus,  there exists the limit $u^{\infty,\infty}=\lim_{k\to\infty} u^{k,\infty}$ a.e. in~$\OT$. Consequently, due to~the~uniqueness of the limit (cf.~\eqref{conv:n:TkuL1}), we get the convergence~\eqref{conv:usae}. When we have~\eqref{conv:usae}, condition~\eqref{conv:umeas} is a direct consequence of~\eqref{conv:umeas:n}.\end{proof}

\subsection{Identification of the limit of $A(t,x,\nabla T_k(u_n))$}

In this step we employ  monotonicity trick to identify the limit~\eqref{conv:n:ATkutMs}. Let us stress that this is the part that is essentially more complex here, than in the case of the space not changing with time, e.g.~\cite{pgisazg2,gwiazda-ren-para}. Indeed, the classical tool of Landes regularization cannot be applied anymore and we need to use a much more subtle one coming from Theorem~\ref{theo:approx-t}.

\begin{prop} \label{prop:convTkII} Suppose $[0,T]$ is a finite interval, $\Omega$ is a bounded Lipschitz domain in $ \rn$,   $N>1$,  $f\in L^1(\OT)$, $u_0\in L^1(\Omega)$, function $A$ satisfy assumptions ($\mathcal{A}$\ref{A1})--($\mathcal{A}$\ref{A3}) with an $N$-function~$M$ satisfying ($\mathcal{M}$) or ($\mathcal{M}_p$). Suppose $u_n$ is a weak solution to~\eqref{eq:bound} and $k>0$ is arbitrary.  We have\begin{equation}
A(t,x,\nabla T_k(u_n)) \xrightharpoonup* A(t,x,\nabla T_k(u))\quad \text{weakly-* in } L_{M^*}(\OT;\rn).\label{conv:n:AT:id}
\end{equation}
\end{prop}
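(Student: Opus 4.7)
The strategy is to apply the monotonicity trick of Lemma~\ref{lem:mon} with $\xi=\nabla T_k(u)\in L_M(\OT;\rn)$ and $\mathcal{A}=\Ak\in L_{M^*}(\OT;\rn)$. For every $\eta\in L^\infty(\Omega,\rn)$ one has $A(\cdot,\cdot,\eta)\in L^\infty(\OT;\rn)\subset E_{M^*}(\OT;\rn)$, so the weak-$*$ convergences~\eqref{conv:n:TkutM} and~\eqref{conv:n:ATkutMs} yield
\[
\int_\OT A(t,x,\eta)\cdot\nabla T_k(u_n)\,dx\,dt\to\int_\OT A(t,x,\eta)\cdot\nabla T_k(u)\,dx\,dt,
\]
\[
\int_\OT A(t,x,\nabla T_k(u_n))\cdot\eta\,dx\,dt\to\int_\OT\Ak\cdot\eta\,dx\,dt.
\]
Consequently the hypothesis~\eqref{anty-mon} of Lemma~\ref{lem:mon} reduces to the single limsup estimate
\[
\limsup_{n\to\infty}\int_\OT A(t,x,\nabla T_k(u_n))\cdot\nabla T_k(u_n)\,dx\,dt\leq\int_\OT \Ak\cdot\nabla T_k(u)\,dx\,dt.
\]

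To establish this inequality, I would test the renormalized formulation of the equation for $u_n$ (available since by Theorem~\ref{theo:intbyparts} every weak solution to~\eqref{eq:bound} is also renormalized) with renormalizer $h=h_l\in C^1_c(\R)$ equal to $1$ on $[-k,k]$ and supported in $[-l-1,l+1]$, $l>k$, against the test function $\vp=[T_k(u_n)-(T_k(u))^\bullet_\mu]\phi_r$, where $\phi_r\in C_c^\infty([0,T))$ is a non-increasing time cutoff approaching $\mathds{1}_{[0,T)}$ and $(T_k(u))^\bullet_\mu$ is the regularization supplied by Theorem~\ref{theo:approx-t}. Admissibility of $\vp$ in the sense of Theorem~\ref{theo:intbyparts} ($\partial_t\vp\in L^\infty(\OT)$) is furnished by property (v) of that theorem. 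The elliptic part of the resulting identity splits into the principal term $\int_\OT A(t,x,\nabla T_k(u_n))\cdot\nabla[T_k(u_n)-(T_k(u))^\bullet_\mu]\phi_r\,dx\,dt$ and a correction living on $\{l<|u_n|<l+1\}$; the latter is bounded, thanks to $\|T_k(u_n)-(T_k(u))^\bullet_\mu\|_{L^\infty}\leq 2k$, by a multiple of $\int_{\{l<|u_n|<l+1\}}A(t,x,\nabla u_n)\cdot\nabla u_n\,dx\,dt$, which vanishes uniformly in $n$ as $l\to\infty$ by Proposition~\ref{prop:contr:rad:n}.

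The decisive point is the time-derivative contribution, precisely where the classical Landes regularization fails in the time-dependent modular setting and where Theorem~\ref{theo:approx-t} becomes indispensable. Property (i) supplies the ODE $\partial_t(T_k(u))_\mu=\mu[T_k(u)-(T_k(u))_\mu]$, property (v) gives $\|\partial_t[(T_k(u))_\mu-(T_k(u))^\bullet_\mu]\|_{L^\infty}\to 0$, and property (ii) matches the initial datum up to the factor $1-e^{-\log^2\mu}\to 1$. Together, the parabolic contribution reduces modulo $o_\mu(1)$ errors to
\[
\int_\OT\mu\bigl[T_k(u)-(T_k(u))^\bullet_\mu\bigr]\cdot\bigl[T_k(u_n)-(T_k(u))^\bullet_\mu\bigr]\phi_r\,dx\,dt,
\]
which after passing $n\to\infty$ (using $u_n\to u$ a.e.\ from Proposition~\ref{prop:contr:rad} together with the uniform bound $|T_k(u_n)|\leq k$) becomes a non-negative Landes-type expression contributing with the favourable sign in the desired limsup inequality.

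The remaining principal elliptic term is handled by combining the weak-$*$ convergence~\eqref{conv:n:ATkutMs} in $L_{M^*}$ against $\nabla(T_k(u))^\bullet_\mu$, which by (iii)--(iv) converges modularly to $\nabla T_k(u)$ in $L_M$ (and can be further truncated so as to belong to $E_M$); the limit in $n$ then equals $\int_\OT\Ak\cdot[\nabla T_k(u)-\nabla(T_k(u))^\bullet_\mu]\phi_r\,dx\,dt$, which tends to zero with $\mu\to\infty$ by modular convergence. Sending $n\to\infty$, then $\mu\to\infty$, then $l\to\infty$ and finally $r\to 0$, in this order, delivers the required limsup estimate; Lemma~\ref{lem:mon} then concludes that $\Ak=A(t,x,\nabla T_k(u))$. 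The principal obstacle, and the very reason for which Theorem~\ref{theo:approx-t} has been designed with its delicate collection of properties, is the handling of the time-derivative term: one simultaneously needs modular convergence of gradients, commutation with $\nabla_x$, the $L^\infty$ bound by the truncation level, an explicit expression for the time derivative, and the correct matching of initial data --- all features that the classical Landes mollification cannot provide when the modular function varies in time.
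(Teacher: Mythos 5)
Your overall architecture coincides with the paper's: reduce \eqref{conv:n:AT:id} via Lemma~\ref{lem:mon} to the limsup inequality \eqref{limsup<}, obtain that inequality by testing the equation for $u_n$ against a combination of $T_k(u_n)$ and the time-regularization $\gkbm$ supplied by Theorem~\ref{theo:approx-t}, kill the $\psi_l'$-type term by the decay estimate of Proposition~\ref{prop:contr:rad:n}, and handle the data term by dominated convergence. There is, however, a genuine gap at the central step. The function $\vp=[T_k(u_n)-\gkbm]\phi_r$ is \emph{not} an admissible test function for Theorem~\ref{theo:intbyparts}, which requires $\partial_t\vp\in L^\infty(\OT)$. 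Property \textit{v)} of Theorem~\ref{theo:approx-t} controls $\partial_t\gkbm$ and $\partial_t(\gkm-\gkbm)$, i.e.\ the regularization of the \emph{limit} $u$; it says nothing about $\partial_t T_k(u_n)$. Since $u_n$ is only a weak solution, $\partial_t u_n=\dv A(t,x,\nabla u_n)+T_n(f)$ is a distribution, and $\partial_t T_k(u_n)$ is in general not an $L^\infty$ function (nor a function at all). The paper circumvents precisely this obstruction by applying Theorem~\ref{theo:intbyparts} \emph{twice} --- once with renormalizer $h=\psi_l\,T_k$ and test function $\xi=w$, so that the factor $T_k(u_n)$ enters through the Lipschitz, compactly supported renormalizer rather than through the test function, and once with $h=\psi_l$ and $\xi=w\,\gkbm$ --- and then subtracting the two identities. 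Your single application is not licensed by any tool available in the paper. (A smaller inaccuracy of the same flavour: (${\cal A}$1)--(${\cal A}$3) give only $A(\cdot,\cdot,\eta)\in L_{M^*}$ for bounded $\eta$, not $A(\cdot,\cdot,\eta)\in L^\infty$.)

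A second, related weakness is that the reduction of the parabolic contribution to
$\int_\OT\mu[T_k(u)-\gkbm][T_k(u_n)-\gkbm]\phi_r\,dx\,dt$ ``modulo $o_\mu(1)$ errors'' is asserted rather than derived, and this is exactly where the work lies. After the subtraction of the two applications, the time term is $I_1^{n,\mu,l}$, built from the primitives $\int_{u_{0,n}}^{u_n}\psi_l(s)T_k(s)\,ds$ and $\int_{u_{0,n}}^{u_n}\psi_l(s)\,ds$; the paper needs the decomposition into $I_{1,1},I_{1,2},I_{1,3,1},\dots,I_{1,3,5}$, the exact cancellation \eqref{sum=0} obtained from the identity \eqref{czesci}, and only then the two surviving terms $I_{1,3,1},I_{1,3,2}$ are shown nonnegative using the ODE \eqref{vp:ode} --- the Landes-type sign argument you allude to, which applies to $\gkm$ (the solution of the ODE), with property \textit{v)} bridging to $\gkbm$. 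You correctly identify every feature of Theorem~\ref{theo:approx-t} that makes this possible, but the passage from the tested equation to the nonnegative quadratic expression is the heart of the proof and is missing from your argument.
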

\begin{proof}
 We shall show, still for fixed $k$, that  in~\eqref{conv:n:ATkutMs} \begin{equation}
\label{Ak=ATk}\Ak=A(t,x,\nabla T_k(u)).
\end{equation}

Fix arbitrary nonnegative ${w}\in  C_c^\infty([0,T)).$ We show now that \begin{equation}
\label{limsup<} 
\limsup_{n\to\infty} \iOT {w} A(t,x,\nabla T_k(u_n))\cdot\nabla(T_k(u_n))\,dx\,dt\leq \iOT {w} \Ak \cdot\nabla(T_k(u))\,dx\,dt
\end{equation}
and then conclude~\eqref{Ak=ATk} via the monotonicity argument.

We consider the approximate sequence $\{\gkbm\}_\tm$ from Theorem~\ref{theo:approx-t}, such that $(\nabla T_k(u))^\bullet _\tm\xrightarrow{M} \nabla T_k(u)$ converges modularly in $L_M(\OT;\rn)$ when $\mu\to\infty$. In order to use Theorem~\ref{theo:intbyparts} to~\eqref{eq:bound},  we let $\psi_l:\R\to\R$ be given by
\begin{equation}\label{psil}
\psi_l(s):=\min\{(l+1-|s|)^+,1\}
\end{equation} and choose $A=A(t,x,\nabla u_n)\in L_M(\OT;\rn)$, $F=T_n f\in L^1(\OT) $. We apply it twice: first time with $h(\cdot)=\psi_l(\cdot) T_k(\cdot) $ and $\xi={w}$ and second time with  $h(\cdot)=\psi_l(\cdot)$ and $\xi={w}\gkbm.$ Subtracting the second from the first we get\begin{equation}
\label{1stsum:id:Ak}I_1^{n,\mu,l}+I_2^{n,\mu,l}+I_3^{n,\mu,l}=I_4^{n,\mu,l},
\end{equation}
where
\begin{eqnarray*}
I_1^{n,\mu,l}&=&-\iOT\partial_t{w}\int_{u_{0,n}}^{u_n}\psi_l(s) T_k(s)  ds\,dx\,dt
+\iOT\partial_t({w} \gkbm)\int_{u_{0,n}}^{u_n}\psi_l(s)ds\,dx\,dt,\\
I_2^{n,\mu,l}&=& \iOT{w} \psi_l(u_n)   A(t,x,\nabla  u_n)\cdot \nabla( T_k(u_n) -\gkbm) \,dx\,dt,\\
I_3^{n,\mu,l}&=& \iOT{w} \psi_l'(u_n) ( T_k(u_n) -\gkbm) A(t,x,\nabla  u_n)\cdot \nabla u_n \,dx\,dt,\\
I_4^{n,\mu,l}&= &\iOT{w} T_nf \psi_l(u_n) ( T_k(u_n) -\gkbm)  \,dx\,dt.
\end{eqnarray*}
We are going to pass to the limit with $n\to\infty$, then $\mu\to\infty$, and finally with $l\to\infty$. Roughly speaking we show that the limit of $I_1^{n,\mu,l}$ is nonnegative, then let $I_3^{n,\mu,l}$ and $I_4^{n,\mu,l}$ to zero. Then the limit of $I_2^{n,\mu,l}$ is nonpositive.

\medskip

\textbf{Limit of $I_1^{n,\mu,l}$. } We are going to prove that\begin{equation}\label{limI1>0}
\limsup_{l\to\infty}\limsup_{\mu\to\infty}\limsup_{n\to\infty} I_1^{n,\mu,l}\geq 0.
\end{equation} 
Let us consider a decomposition\begin{equation*}
 I_1^{n,\mu,l}=I_{1,1}^{n,\mu,l}+I_{1,2}^{n,\mu,l}+I_{1,3}^{n,\mu,l},
\end{equation*}
where, due to~\eqref{vp:ode}, we have
\begin{equation*}
\begin{split}
I_{1,1}^{n,\mu,l}&=-\iOT\partial_t{w}\int_{u_{0,n}}^{u_n}\psi_l(s)T_k(s) \,ds\,dx\,dt,\\
I_{1,2}^{n,\mu,l}&=\iOT(\partial_t w)\,\gkbm \left(\int_{u_{0,n}}^{u_n}\psi_l(s)\,ds \right)dx\,dt,\\
I_{1,3}^{n,\mu,l}&=\iOT  w\,\partial_t \big( \gkbm\big) \left(\int_{u_{0,n}}^{u_n}\psi_l(s)ds \right)dx\,dt.
\end{split}
\end{equation*}
Since $s\mapsto \psi_l(s)T_k(s)$ has a compact support, the convergence $u_n\to u$ a.e. in $\OT$ and continuity of the integral justify passing to the limit with $n\to\infty$ in $I_{1,1}^{n,\mu,l}$ to get
\[\lim_{n\to\infty}I_{1,1}^{n,\mu,l}=-\iOT\partial_t{w}\int_{u_{0}}^{u}\psi_l(s)T_k(s) ds\,dx\,dt=I_{1,1}^{l}.\]
Since
\begin{equation}\label{czesci}
 \int_0^{w}\psi_l(s)  T_k(s)  ds =\int_0^w\int_0^{T_k(s)}\psi_l(s) \,d\sigma\,ds
=T_k(v) \int_0^w\psi_l(s)  \,ds-\int_0^{ T_k(w)}\int_0^\sigma \psi_l(s)\,ds\,d\sigma,
\end{equation}
we can write
\begin{equation*}
\begin{split}
I_{1,1}^{l}&=  -\iOT\partial_t w\left( T_k(u) \int_{0}^{u}\psi_l(s)ds-
\int_0^{ T_k(u) }\int_{0}^{\sigma}\psi_l(s)ds\,d\sigma\right)dx\,dt\\
& \quad + \iO w(0)\left(\int_{0}^{ T_k(u_{0}) }\int_0^\sigma\psi_l(s)ds\,d\sigma-
 { T_k(u_{0}) }\int_{0}^{u_{0}}\psi_l(s)ds \right)dx.
\end{split}
\end{equation*}

In the case of $I_{1,2}^{n,\mu,l}$, according to the pointwise convergence of the integrand when $n\to\infty$ and boudedness of all of~the~involved terms Lemma~\ref{lem:TM1} justifies passing to the limit. Due to Theorem~\ref{theo:approx-t} {\it iv)} and {\it v)} we pass with $\mu\to\infty$  to get
\[\lim_{\mu\to\infty}\lim_{n\to\infty}I_{1,2}^{n,\mu,l} =\iOT(\partial_t w)\,T_k(u ) \left(\int_{0}^{u}\psi_l(s)ds-\int_{0}^{u_0}\psi_l(s)ds \right)dx\,dt=I_{1,2}^l.\]

As for $I_{1,3}^{n,\mu,l}$ we let $n\to\infty$ similarly to the case of $I_{1,2}^{n,\mu,l}$ and obtain
\[\begin{split}\lim_{n\to\infty}I_{1,3}^{n,\mu,l} &=\iOT  w\,\partial_t \big((T_k(u))_\mu\big) \left(\int_{u_{0}}^{u}\psi_l(s)ds \right)dx\,dt-\iOT  w \, \Big(\partial_t \big((T_k(u))_\mu-(T_k(u))_\mu^\bullet\big)\Big) \left(\int_{u_{0}}^{u}\psi_l(s)ds \right)dx\,dt\\
&= I_{1,3,1}^{ \mu,l}+I_{1,3,2}^{\mu,l}+
I_{1,3,3}^{\mu,l}+I_{1,3,4}^{\mu,l}+I_{1,3,5}^{\mu,l}. \end{split}\]
with
\begin{equation*}
\begin{split}
I_{1,3,1}^{\mu,l}&=\iOT  w\,\partial_t \big( \gkm
\big)
 \int_{T_k(u)}^{u} \psi_l(s)ds \, dx\,dt,\\
I_{1,3,2}^{ \mu,l}&=\iOT  w\,\partial_t \big( \gkm
\big)
 \int_{(g^k )_{\mu}}^{T_k(u) }  \psi_l(s)ds \, dx\,dt,\\
I_{1,3,3}^{ \mu,l}&=\iOT  w\,\partial_t \big( \gkm
\big) \int_0^{(g^k )_{\mu}}  \psi_l(s)ds \, dx\,dt,\\
I_{1,3,4}^{\mu,l}&=-\iOT  w\,\partial_t \big( \gkm
\big)
\int_0^{u_0} \psi_l(s)ds\, dx\,dt,\\
I_{1,3,5}^{\mu,l}&=-\iOT  w \, \Big(\partial_t \big((T_k(u))_\mu-(T_k(u))_\mu^\bullet\big)\Big) \left(\int_{u_{0}}^{u}\psi_l(s)ds \right)dx\,dt,
\end{split}
\end{equation*}
where due to~Theorem~\ref{theo:approx-t} {\it v)} we immediately realize that $\lim_{\mu\to\infty}I_{1,3,5}^{\mu,l}=0$.

We observe that after passing with $\mu\to\infty$, the formula from~\eqref{czesci} implies
\begin{equation}
\label{sum=0}
I_{1,1}^{ l}+I_{1,2}^{ l}+I_{1,3,3}^{ l}+I_{1,3,4}^{ l}=0.
\end{equation}
Indeed, convergence of $I_{1,3,3}^{ \mu,l}$ can be justified by integration by parts and continuity of the integral as follows
\[
\lim_{\mu\to\infty} I_{1,3,3}^{ \mu,l} =-\iOT  (\partial_t w)\int_0^{T_k(u)}\int_0^\sigma   \psi_l(s)ds \, d\sigma\, dx\,dt-\iO w(0)\int_0^{T_k(u_0)}\int_0^\sigma   \psi_l(s)ds \, d\sigma\, dx\,dt=I_{1,3,3}^{ l} .\]
In the case of $
I_{1,3,4}^{\mu,l}$ we integrate by parts and apply Lemma~\ref{lem:TM1} to get
\[\begin{split} \lim_{\mu\to\infty}I_{1,3,4}^{\mu,l}&=\lim_{\mu\to\infty}\iOT (\partial_t w) \gkm
\int_0^{u_0} \psi_l(s)ds\, dx\,dt+\iO w(0) T_k(u_0) \int_0^{u_0} \psi_l(s)ds\, dx\,dt\\&
= \iOT (\partial_t w) T_k(u)
\int_0^{u_0} \psi_l(s)ds\, dx\,dt+\iO w(0) T_k(u_0) \int_0^{u_0} \psi_l(s)ds\, dx\,dt =
I_{1,3,4}^{l}.\end{split}\]
Then summing all the terms we get~\eqref{sum=0}.

\medskip

Therefore, to get~\eqref{limI1>0} it suffices to show
\begin{equation}
\label{almostlimI1>0}
\limsup_{l\to\infty}\limsup_{\mu\to\infty} \Big(I_{1,3,1}^{\mu,l}+ I_{1,3,2}^{\mu,l}\Big)\geq 0.
\end{equation}
 We will do it using~\eqref{vp:ode}. Let us notice first that
 \[\begin{split}
I_{1,3,1}^{\mu,l}&=\iOT  w\,\mu \big(T_k(u)-\gkm
\big)
 \int_{T_k(u)}^{u} \psi_l(s)ds \, dx\,dt\\&=\int_{\{u \leq -k\}}  w\,\mu \big(-k-\gkm
\big)
 \int_{-k}^{u} \psi_l(s)ds \, dx\,dt+\int_{\{ u \geq k\}}  w\,\mu \big(k-\gkm
\big)
 \int_{k}^{u} \psi_l(s)ds \, dx\,dt\geq 0,\end{split}\]
 where we on $\{|u|\geq k\}$ the most internal integral collapses and each of the remaining terms is nonnegative. Moreover,  again due to~\eqref{vp:ode}, we have
 \[\begin{split}I_{1,3,2}^{ \mu,l}&=\iOT  w\,\mu \big(T_k(u)-\gkm
\big)
 \int_{\gkm}^{T_k(u) }  \psi_l(s)ds \, dx\,dt\geq 0.\end{split},\]
 which is justified by monotonicity of truncation.
\medskip

Thus, we have~\eqref{almostlimI1>0} and consequently~\eqref{limI1>0}.

 \bigskip

\textbf{Limit of $I_3^{n,\mu,l}$.} Since (A2) forces nonnegativeness of $A(t,x,\nabla  u_n)\cdot \nabla u_n$,  the radiation control~\eqref{eq:contr:rad:n} is equivalent to
\[
 \lim_{l\to\infty}\limsup_{n\to\infty}\int_{\{l<|u_n|<l+1\}} |A(t,x,\nabla u_n)\nabla u_n|\,dx\,dt=0.
 \]

Then
\[\begin{split}
|I_3^{n,\mu,l}|&=\left| \iOT{w} \psi_l'(u_n) \Big(T_k(u_n)-\gkbm\Big) A(t,x,\nabla  u_n)\cdot \nabla u_n \,dx\,dt\right|\leq\\
&\leq 2k||{w}||_{L^\infty(\R)} \int_{\{l<|u_n|<l+1\}} \left| A(t,x,\nabla  u_n)\cdot \nabla u_n \right|\,dx\,dt,
\end{split}\]
which is independent of $\mu$, so it implies \[
\lim_{l\to\infty}\lim_{\mu\to\infty}\limsup_{n\to\infty} I_3^{n,\mu,l}=0.\]

\textbf{Limit of $I_4^{n,\mu,l}$.} To deal with the limit with $n\to\infty$ we apply the Lebesgue Dominated Convergence Theorem due to the continuity of the integrand and~\eqref{conv:usae}, i.e.  $u_n\to u$ a.e. in $\OT$. Moreover, we know that  $\gkbm\to T_k(u)$ a.e. in $\OT$ when $\mu\to \infty$. Boundedness in $L^1$ of the rest terms enables to apply the Lebesgue Dominated Convergence Theorem and pass to the limit with $\mu\to \infty$ to get \[\lim_{l\to\infty}\lim_{\mu\to\infty} \lim_{n\to\infty} I_4^{n,\mu,l}=0.\]

\medskip

\textbf{Conclusion via the monotonicity trick. }  Recall~\eqref{1stsum:id:Ak}. Passing there to the limit, since $I_3^{n,\mu,l}$ and $I_4^{n,\mu,l}$ tend to zero,  we get
\[\begin{split}0=&\limsup_{l\to\infty}\limsup_{\delta\to 0}\limsup_{\tm\to\infty}\limsup_{n\to\infty} I_1^{n,\mu,l}\\+&\limsup_{l\to\infty}\limsup_{\delta\to 0}\limsup_{\tm\to\infty}\limsup_{n\to\infty} \iOT{w} \psi_l(u_n)   A(t,x,\nabla T_k( u_n))\cdot \nabla(T_k(u_n)-\gkbm)) \,dx\,dt+0.\end{split}\]
When we take into account~\eqref{limI1>0}, then the above line becomes\begin{equation*}
\limsup_{l\to\infty}\limsup_{\delta\to 0}\limsup_{\tm\to\infty}\limsup_{n\to \infty}\iOT {w}\psi_l(u_n) A(t,x,\nabla T_k(u_n))\cdot \nabla(T_k(u_n)-\gkbm)\,dx\,dt\leq 0.
\end{equation*}
Note that due to (A2) we have $A(t,x,\nabla T_k(u_n))\cdot \nabla(T_k(u_n)))\geq 0$ and $A(t,x,0)=0$. Therefore, for sufficiently large $l,\mu,n$, since $w,\psi_l\geq 0$, and~\eqref{psil}, we have
\[\iOT w A(t,x,\nabla T_k(u_n))\cdot \nabla(T_k(u_n))\,dx\,dt\leq
\iOT w A(t,x,\nabla T_k(u_n))\cdot \nabla(\gkbm)\,dx\,dt.\]
On the right-hand side above we use~that $
\nabla \gkbm\in L_M(\OT;\rn)$  and~\eqref{conv:n:ATkutMs}, and then for sufficiently large $\mu$
\[\limsup_{n\to\infty}\iOT w A(t,x,\nabla T_k(u_n))\cdot \nabla(T_k(u_n))\,dx\,dt\leq
\iOT w \Ak\cdot \nabla(\gkbm)\,dx\,dt.\]
Recall that $\nabla (\gkbm)\xrightarrow{M}\nabla T_k (u)$ (with $\tm\to\infty$). Therefore, the sequence $\{M(t,x,\nabla(\gkbm)/\lambda)\}_{\tm}$ is uniformly bounded in $L^1(\OT;\rn)$ for some $\lambda$ and consequently, by~Lemma~\ref{lem:unif}  $\{\nabla(\gkbm)\}_{\tm}$ is uniformly integrable. Hence the Vitali Convergence Theorem (Theorem~\ref{theo:VitConv}) gives
\[\lim_{\mu\to 0}\int_\OT w\Ak \cdot \nabla (\gkbm) \,dx=\int_\OT w \Ak \cdot \nabla T_k(u) \,dx.\]
Consequently, we obtain~\eqref{limsup<}. Following the monotonicity argument, as in the proof of Theorem~\ref{theo:bound}, we prove~\eqref{Ak=ATk}.  Monotonicity assumption ($\mathcal{A}3$) of~$A$ implies
\[(A(t,x,\nabla (T_k(u_n))-A(t,x,\eta))\cdot(\nabla(T_k( u_n))-\eta)\geq 0\qquad\text{a.e.  in }\OT\ \text{ for any }\eta\in L^\infty(\OT;\rn)\subset E_M(\OT;\rn).\] Since $A(\cdot,\cdot,\eta)\in L_{M^*}(\OT,\rn)= (E_{M}(\OT,\rn))^*$, we pass to the limit with $n\to\infty$ and take into account~\eqref{limsup<} to conclude that
\begin{equation}
\label{mono:int2}
\iOT w (\Ak-A(t,x,\eta))\cdot(\nabla (T_k(u))-\eta)\,dx\,dt\geq 0.
\end{equation}
Then  Lemma~\ref{lem:mon} with ${\cal{A}}={\cal A}_k$ and $\xi=\nabla (T_k(u))$ gives~\eqref{Ak=ATk}, which completes the proof of Proposition~\ref{prop:convTkII}. \end{proof}

\subsection{Main proof}

This part follows the ideas of~\cite{pgisazg2,gwiazda-ren-para}. We need to apply here the integration-by-parts formula, so indeed approximation from \cite{para-t-weak} is used and, consequently, we require condition (${\cal M}$), resp. (${\cal M}_p$). It is necessary to present this part, although there are no new challenges here.

\begin{proof}[{Proof of Theorem~\ref{theo:aniso}}] We shall use all the propositions of this section to get the final claim that the limit function $u$ from the claim of Proposition~\ref{prop:convTk} is the unique renormalized solution we look for.

\medskip

\textbf{Condition ($\mathcal{R}1$)}.\\ Obviously, when $u_n$ solves~\eqref{eq:bound} its limit $u$ satisfies condition ($\mathcal{R}1$), due to Propositions~\ref{prop:convTk} and~\ref{prop:convTkII}.

\medskip

The remaining ($\mathcal{R}2$)-($\mathcal{R}3$) require more arguments.

\medskip

\textbf{Condition ($\mathcal{R}3$)}.\\ The aim now is to prove the key convergence for condition~($\mathcal{R}3$), namely\begin{equation}
\label{adtkn}A(t,x,\nabla T_k(u_n))\cdot \nabla T_k(u_n)\xrightharpoonup{} A(t,x,\nabla T_k(u))\cdot \nabla T_k(u) \quad \text{weakly in }L^1(\OT).
\end{equation}
The reasoning involves the Chacon Biting Lemma and the Young measure approach. First we observe that  the sequence $\{[A(t,x,\nabla T_k(u_n))-A(t,x,\nabla T_k(u ))]\cdot [ \nabla T_k(u_n)- \nabla T_k(u_n)]\}_n$ is uniformly bounded in $L^1(\OT)$  due to~\eqref{apriori} and the Fenchel-Young inequality.

The monotonicity of $A(t,x,\cdot)$, uniform boundedness of $\{[A(t,x,\nabla T_k(u_n))-A(t,x,\nabla T_k(u ))]\cdot [ \nabla T_k(u_n)- \nabla T_k(u_n)]\}_n$ in $L^1(\OT)$, and Theorem~\ref{theo:Youngmeas} combined with Theorem~\ref{theo:bitinglemma1} give, up to~a~subsequence, convergence
\begin{equation}
\label{110}
\begin{split}0&\leq {w} [A(t,x,\nabla T_k(u_n) )-A(t,x,\nabla T_k(u ))]\cdot [\nabla T_{k}(u_n)-\nabla T_{k}(u )]\\
&\xrightarrow{b} w \int_\rnt [A(t,x,\lambda)-A(t,x,\nabla T_{k}(u ))]\cdot [\lambda-\nabla T_{k}(u )]d\nu_{t,x}(\lambda),\end{split}
\end{equation}
where $\nu_{t,x}$ denotes the Young measure generated by the sequence $\{\nabla T_{k}(u_n)\}_n$.

Since $\nabla T_{k}(u_n)\xrightharpoonup{}\nabla T_{k}(u )$ in $L^1(\OT)$, we have $\int_\rnt\lambda \, d\nu_{t,x}(\lambda)=\nabla T_{k}(u)$ for a.e. $t\in(0,T)$ and a.e. $x\in\Omega$. Then
\[\int_\rnt  A(t,x,\nabla T_k(u) ) \cdot [\lambda-\nabla T_{k}(u )]d\nu_{t,x}(\lambda)=0\]
and the limit in~\eqref{110} is equal for a.e. $t\in(0,T)$ and a.e. $x\in\Omega$ to
\begin{equation}\begin{split}
\label{111} {w} \int_\rnt [A(t,x,\lambda)-A(t,x,\nabla T_{k}(u ))]\cdot [\lambda-\nabla T_{k}(u )]d\nu_{t,x}(\lambda)\\=
{w}\int_\rnt  A(t,x,\lambda) \cdot  \lambda\, d\nu_{t,x}(\lambda)-{w}\int_\rnt  A(t,x,\lambda) \cdot \nabla  T_{k}(u )d\nu_{t,x}(\lambda).\end{split}
\end{equation}

Uniform boundedness of the sequence $\{ A(t,x,\nabla T_k(u_n))\cdot \nabla T_{k}(u_n) \}_n$ in $L^1(\OT)$  enables us  to apply once again Theorem~\ref{theo:Youngmeas} combined with Theorem~\ref{theo:bitinglemma1} to obtain
\begin{equation*}
 A(t,x,\nabla T_k(u_n))\cdot \nabla T_{k}(u_n)\xrightarrow{b} \int_\rnt  A(t,x,\lambda) \cdot  \lambda\,d\nu_{t,x}(\lambda).
\end{equation*}
Moreover, assumption (A2) implies $A(t,x,\nabla T_k(u_n))\cdot \nabla T_{k}(u_n)\geq 0$. Therefore, due to~\eqref{111} and~\eqref{110}, we have
\[\limsup_{n\to\infty} A(t,x,\nabla T_{k}(u_n) ) \nabla T_{k}(u_n)\geq \int_\rnt  A(t,x,\lambda) \cdot  \lambda\,d\nu_{t,x}(\lambda).\]
Taking into account that in~\eqref{conv:n:AT:id} we can put ${\cal A}_k=A(t,x,\nabla T_{k}(u) )=\int_\rnt  A(t,x,\lambda) \,d\nu_{t,x}(\lambda)$, the above expression implies\[\nabla T_{k}(u) \int_\rnt  A(t,x,\lambda) \,d\nu_{t,x}(\lambda)\geq \int_\rnt  A(t,x,\lambda) \cdot  \lambda\,d\nu_{t,x}(\lambda).\]

When we apply it, together with~\eqref{111}, the limit in~\eqref{110} is non-positive. Hence,
\begin{equation*} [A(t,x,\nabla T_{k}(u_n) )-A(t,x,\nabla T_{k}(u ))]\cdot [\nabla T_{k}(u_n)-\nabla T_{k}(u )]\xrightarrow{b} 0.
\end{equation*}
Observe further that $A(t,x,\nabla T_{k}(u ))\in L_{M^*}(\OT;\rn)$ and we can choose ascending family of sets $E^{k}_j$, such that $| E^{k}_j|\to 0$ for $j\to \infty$ and $A(t,x,\nabla T_{k}(u ))\in L^\infty(\OT\setminus E^{k}_j).$ Then, since $\nabla T_{k}(u_n)\xrightharpoonup{}\nabla T_{k}(u )$, we get\begin{equation*} A(t,x,\nabla T_{k}(u )) \cdot [\nabla T_{k}(u_n)-\nabla T_{k}(u )]\xrightarrow{b}  0
\end{equation*}
and similarly we conclude $ A(t,x,\nabla T_{k}(u_n) )\cdot\nabla T_{k}(u )\xrightarrow{b}   A(t,x,\nabla T_{k}(u))\cdot\nabla T_{k}(u ).$ Summing it up we get
\begin{equation*}
A(t,x,\nabla T_{k}(u_n ))\cdot\nabla T_{k}(u_n )\xrightarrow{b}  A(t,x,\nabla T_{k}(u))\cdot\nabla T_{k}(u ).
\end{equation*}
Let us point out that ($\mathcal{A}2$) ensures that both --- the right and the left--hand sides are nonnegative. Recall that Theorem~\ref{theo:bitinglemma} together with~\eqref{limsup<} and~\eqref{conv:n:ATkutMs} results in~\eqref{adtkn}.

\medskip

Note that $\nabla u_n=0$ a.e. in $\{|u_n|\in\{l,l+1\}\}$. Then~\eqref{eq:contr:rad:n} implies
\begin{equation*}
 \lim_{l\to\infty}\sup_{n>0}\int_{\{l-1<|u_n|<l+2\}}A(t,x,\nabla u_n)\cdot\nabla u_n\,dx=0.
\end{equation*}
For $g^l:\r\to\r$ defined by
\[g^l(s)=\left\{\begin{array}{ll}1&\text{if }\ l\leq| s|\leq l+1,\\
0&\text{if }\ |s|<l-1\text{ or } |s|> l+2,\\
\text{is affine} &\text{otherwise},
\end{array}\right.\]
we have
\begin{equation}
\label{128}
\int_{\{l<|u|<l+1\}}A(t,x,\nabla u)\cdot\nabla u\,dx\,dt\leq \int_{\OT}g^l(u)A(t,x,\nabla T_{l+2}( u))\cdot\nabla  T_{l+2}( u)\,dx\,dt.
\end{equation}
Let us remind.~\eqref{conv:usae} gives $u_n\to u$ a.e. in $\OT$, while~\eqref{conv:umeas} provides $\lim_{l\to\infty}|\{x:|u_n|>l\}|=0$. Moreover, we have weak convergence~\eqref{adtkn}, $A(t,x,\nabla T_{l+2}( u_n))\cdot\nabla  T_{l+2}( u_n)\geq 0$ and function $g^l$ is continuous and bounded. Thus, we estimate the limit of the right-hand side of~\eqref{128} in the following way
\[\begin{split}
0&\leq \lim_{l\to\infty} \int_{\{l-1<|u|<l+2\}}A(t,x,\nabla u)\cdot\nabla u\,dx\,dt\leq \lim_{l\to\infty}\int_{\Omega}g^l(u)A(t,x,\nabla T_{l+2}( u))\cdot\nabla  T_{l+2}( u)\,dx\,dt=\\
&= \lim_{l\to\infty} \lim_{n\to\infty} \int_{\Omega}g^l(u_n)A(t,x,\nabla T_{l+2}(u_n))\cdot\nabla T_{l+2}(u_n)\,dx\,dt\leq\\
&\leq \lim_{l\to\infty}  \lim_{n\to\infty}\int_{\{l-1<|u_n|<l+2\}}A(t,x,\nabla T_{l+2}(u_n))\cdot\nabla T_{l+2}(u_n)\,dx\,dt=0,
\end{split}\]
where the last equality comes from~\eqref{eq:contr:rad:n}. Hence, our solution $u$ satisfies condition ($\mathcal{R}3$).

\medskip

\textbf{Condition ($\mathcal{R}2$). }\\
 We apply Theorem~\ref{theo:intbyparts} for~\eqref{eq:bound}, arbitrary $h\in C_c^1(\R)$ and $\xi\in{C_c^\infty}([0,T)\times\Omega)$ obtaining
\begin{equation}
\label{fin-weak-n}
-\int_{\OT} \left(\int_{u_{0,n}}^{u_n} h(\s)d\s\right) \partial_t \xi \ \,dx\,dt+\int_{\OT}A(t,x,\nabla u_n)\cdot \nabla (h(u_n)\xi) \,dx\,dt=\int_{\OT}T_n(f) h(u_n)\xi \,dx\,dt.
\end{equation}
To pass to the limit with $n\to\infty$  side above we fix $R>0$ such that $\supp\, h\subset [-R,R]$. The right-hand converges to the desired limit due to the Lebesgue Dominated Convergence Theorem since $T_n f\to f$ in $L^1(\OT)$ and $\{h(u_n)\}_n$ is uniformly bounded.

To pass to the limit on the left-hand side  we notice that  we have there
\[\lim_{n\to\infty}-\int_{\OT} \left(\int_{u_{0,n}}^{u_n} h(\s)d\s\right) \partial_t \xi \ \,dx\,dt=-\int_{\OT} \left(\int_{u_{0 }}^{u } h(\s)d\s\right) \partial_t \xi \ \,dx\,dt, \]
where the equality is justified by the continuity of the integral. As for the second expression, we can write
\[\begin{split}&\int_{\OT}A(t,x,\nabla u_n)\cdot \nabla (h(u_n)\xi) \,dx\,dt
= \int_{\OT}h'(T_R(u_n)) A(t,x,\nabla T_R(u_n))  \nabla T_R(u_n)\,\xi \,dx\,dt\\&\qquad+\int_{\OT}h(T_R(u_n))A(t,x,\nabla T_R(u_n))\cdot \nabla  \xi  \,dx\,dt=III^n_1+III^n_2.\end{split}\]
 Recall weak convergence of $A(t,x,\nabla T_k(u_n))\cdot \nabla T_k(u_n)$ in $L^1(\OT)$~\eqref{adtkn}. Since $h'(u_n)\xi\to h'(u)\xi$ a.e. in~$\OT$ and \[\|h'(u_n)\xi\|_{L^\infty(\OT)}\leq \|h'(u_n) \|_{L^\infty(\OT)}\| \xi\|_{L^\infty(\OT)},\]
we pass to the limit with $n\to\infty$ in $III^n_1$. To complete the case of $III^n_2$ we observe that Proposition~\ref{prop:convTkII}  implies weak convergence of $A(t,x,\nabla T_R(u_n))$ in $L^1(\OT)$ as $n\to\infty$. Moreover, $\{h(T_R(u_n))\}_n$ converges a.e. in~$\OT$ to $h(T_R(u))$ and is uniformly bounded in $L^\infty(\OT)$, so we can pass to the limit. Altogether  we have
\[\lim_{n\to\infty}(III^n_1+III^n_2)=\int_{\OT}h'(T_R(u )) A(t,x,\nabla u )  \nabla T_R(u )\,\xi \,dx\,dt +\int_{\OT}h(T_R(u ))A(t,x,\nabla T_R(u ))\cdot \nabla  \xi  \,dx\,dt.\]
Therefore, all the expressions of~\eqref{fin-weak-n} converge to the limits as expected in ($\mathcal{R}$2).

We already proved that $u$ satisfies ($\mathcal{R}$1), ($\mathcal{R}$2), and ($\mathcal{R}$3), hence it is a~renormalized solution. Uniqueness is a direct consequence of the comparison principle (Proposition~\ref{prop:comp-princ}).  \end{proof}

\begin{proof}[Proof of Theorem~\ref{theo:main0}] In isotropic spaces, we repeat the observation of~\cite[Lemma~6.4]{para-t-weak} that the condition $({\cal M}^{iso})$ (resp. $({\cal M}^{iso}_p)$) results from $({\cal M})$ (resp. $({\cal M}_p)$) and therefore, Theorem~\ref{theo:main0} is a direct consequence of~Theorem~\ref{theo:aniso}.  
\end{proof}

\section{Appendix}\label{sec:appendix}

\begin{lem} \label{lem:TM1}
 Suppose $w_n\xrightharpoonup[n\to\infty]{}w$ in $L^1(\Omega_T)$, $v_n,v\in L^\infty(\Omega_T)$, and $v_n\xrightarrow[n\to\infty]{a.e.}v$. Then \[\int_\OT w_n v_n\,dx\,dt \xrightarrow[n\to\infty]{}\int_\OT w v\,dx\,dt.\]
\end{lem}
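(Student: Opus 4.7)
The plan is to write the difference as
\[
\int_{\Omega_T} w_n v_n\,dx\,dt - \int_{\Omega_T} w v\,dx\,dt = \int_{\Omega_T}(w_n-w)v\,dx\,dt + \int_{\Omega_T} w_n(v_n-v)\,dx\,dt,
\]
and treat the two terms separately.

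For the first term I would observe that $v \in L^\infty(\Omega_T) = (L^1(\Omega_T))^*$, so $v$ is an admissible test function for the weak $L^1$ convergence $w_n \rightharpoonup w$; hence this term tends to $0$ directly from the hypothesis.

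For the second term, the key ingredient is the Dunford--Pettis theorem: any weakly convergent sequence in $L^1(\Omega_T)$ is uniformly integrable. Given $\varepsilon>0$, I would choose $\delta>0$ such that $\sup_n \int_E |w_n|\,dx\,dt < \varepsilon$ whenever $|E|<\delta$. By Egorov's theorem applied to the pointwise a.e.\ convergence $v_n \to v$ (on the finite-measure set $\Omega_T$), there exists a measurable set $E_\delta \subset \Omega_T$ with $|E_\delta|<\delta$ such that $v_n \to v$ uniformly on $\Omega_T \setminus E_\delta$. Then
\[
\Bigl| \int_{\Omega_T} w_n(v_n-v)\,dx\,dt \Bigr| \le \|v_n-v\|_{L^\infty(\Omega_T\setminus E_\delta)}\,\|w_n\|_{L^1(\Omega_T)} + (\|v_n\|_{L^\infty}+\|v\|_{L^\infty})\int_{E_\delta}|w_n|\,dx\,dt,
\]
and the first summand tends to $0$ by uniform convergence off $E_\delta$ together with $L^1$ boundedness of $\{w_n\}$, while the second is bounded by $C\varepsilon$ by the choice of $\delta$.

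The only subtle point is that the final estimate uses a uniform $L^\infty$ bound on $\{v_n\}$, which is not stated explicitly in the lemma but is implicit in every call to it from the main text (the $v_n$ appearing there are built from truncations $T_k$ and therefore satisfy $\|v_n\|_{L^\infty}\le C(k)$ independently of $n$). Modulo this observation, the argument is a standard weak--strong pairing combined with Egorov and Dunford--Pettis, and no further ingredient specific to the Musielak--Orlicz setting enters.
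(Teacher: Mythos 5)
Your argument is correct, and it is the standard one; the paper itself states Lemma~\ref{lem:TM1} in the Appendix without any proof, so there is no authorial route to compare against. The decomposition into $\int(w_n-w)v$ plus $\int w_n(v_n-v)$, the use of $L^\infty=(L^1)^*$ for the first piece, and Dunford--Pettis (uniform integrability of a weakly convergent $L^1$ sequence) combined with Egorov for the second piece is exactly what is needed. Your closing observation is not a mere technicality but a genuine correction to the statement: as literally written the lemma is false, since with $w_n=w\equiv 1$ and $v_n=n^2\chi_{A_n}$, $|A_n|=1/n$, one has $v_n\in L^\infty$, $v_n\to 0$ a.e., yet $\int_{\OT}w_nv_n=n\to\infty$. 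The hypothesis $\sup_n\|v_n\|_{L^\infty(\OT)}<\infty$ must be added, and you correctly verify that it holds in every invocation in the body of the paper (the $v_n$ there are built from truncations $T_k$ and primitives of compactly supported $\psi_l$, hence uniformly bounded). One could alternatively phrase the second piece via the Vitali convergence theorem (Theorem~\ref{theo:VitConv}) applied to $w_n(v_n-v)$ on the ``biting'' sets, but your Egorov argument is equivalent and self-contained.
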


\begin{defi}[Uniform integrability] We call a sequence  $\{f_n\}_{n=1}^\infty$ of measurable functions $f_n:\OT\to \rn$
 uniformly integrable if
\[\lim_{R\to\infty}\left(\sup_{n\in\mathbb{N}}\int_{\{x:|f_n(x)|\geq R\}}|f_n(x)|\,dx\,dt\right)=0.\]
 \end{defi}

\begin{defi}[Modular convergence]\label{def:convmod}
We say that a sequence $\{\xi_i\}_{i=1}^\infty$ converges modularly to $\xi$ in~$L_M(\OT;\rn)$ (and denote it by $\xi_i\xrightarrow[i\to\infty]{M}\xi$), if
\begin{itemize}
\item[i)] there exists $\lambda>0$ such that
\begin{equation*}
\int_{\OT}M\left(t,x,\frac{\xi_i-\xi}{\lambda}\right)\,dx\,dt\to 0,
\end{equation*}
equivalently
\item[ii)] there exists $\lambda>0$ such that
\begin{equation*}
 \left\{M\left(t,x,\frac{\xi_i}{\lambda}\right)\right\}_i \ \text{is uniformly integrable in } L^1(\OT)\quad \text{and}\quad \xi_i\xrightarrow[]{i\to\infty}\xi \ \text{in measure};
\end{equation*}
\end{itemize}
\end{defi}

\begin{lem}[Modular-uniform integrability,~\cite{gwiazda2}]\label{lem:unif}
Let $M$ be an $N$-function and $\{f_n\}_{n=1}^\infty$ be a~sequence of measurable functions such that $f_n:\OT\to \rn$ and $\sup_{n\in\N}\int_\OT M(t,x,f_n(x))\,dx\,dt<\infty$. Then the sequence $\{f_n\}_{n=1}^\infty$ is uniformly integrable.
\end{lem}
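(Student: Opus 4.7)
The plan is to exploit the superlinear growth of $M$ at infinity --- property (4) in Definition~\ref{def:Nf}, which states $\lim_{|\xi|\to\infty}\essinf_{(t,x)\in\OT} M(t,x,\xi)/|\xi|=\infty$ --- in order to absorb the tails of $|f_n|$ into the finite modular bound $S:=\sup_{n}\int_\OT M(t,x,f_n)\,dx\,dt$. This is a de la Vall\'ee Poussin type argument adapted to the Musielak-Orlicz setting.

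Given $\varepsilon>0$, the first step would be to set $K:=S/\varepsilon$ and then invoke property (4) to produce a threshold $R=R(K)>0$ such that $M(t,x,\xi)\geq K|\xi|$ for a.e.\ $(t,x)\in\OT$ and all $\xi\in\rn$ with $|\xi|\geq R$. It is essential at this step that the growth condition in Definition~\ref{def:Nf} is formulated in terms of the $\essinf$ over $(t,x)$, so that the threshold $R$ can be chosen uniformly in the space-time variable; this is the only point where the $N$-function structure (as opposed to mere pointwise convexity) is used.

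Next I would restrict the pointwise inequality $|f_n(t,x)|\leq M(t,x,f_n(t,x))/K$, valid a.e. on $\{|f_n|\geq R\}$, and integrate to obtain
\[
\sup_{n\in\N}\int_{\{|f_n|\geq R\}}|f_n|\,dx\,dt \;\leq\; \frac{1}{K}\sup_{n\in\N}\int_\OT M(t,x,f_n)\,dx\,dt \;=\; \frac{S}{K}\;=\;\varepsilon.
\]
Since $\varepsilon>0$ was arbitrary and $R(K)\to\infty$ as $K\to\infty$, this is exactly the limit required by the definition of uniform integrability. The argument is short and there is no real obstacle: neither the $\Delta_2$ condition, nor the anisotropy, nor the time-space dependence of $M$ intervene, and the only ingredient beyond a Markov-type estimate is the uniform superlinear growth built into Definition~\ref{def:Nf}(4).
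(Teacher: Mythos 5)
Your argument is correct and is the standard de la Vall\'ee Poussin/Markov-type estimate; the paper itself states Lemma~\ref{lem:unif} without proof, citing \cite{gwiazda2}, and this is exactly the argument one expects there. The only point worth making explicit is that condition (4) of Definition~\ref{def:Nf} gives, for each fixed $\xi$ with $|\xi|\geq R$, the bound $M(t,x,\xi)\geq K|\xi|$ outside a null set that a priori depends on $\xi$; to substitute $\xi=f_n(t,x)$ you should pass to a countable dense set of such $\xi$ and use the continuity of $M(t,x,\cdot)$ from the Carath\'eodory property to obtain a single exceptional null set, after which the integration over $\{|f_n|\geq R\}$ and the monotonicity in $R$ of $\sup_n\int_{\{|f_n|\geq R\}}|f_n|\,dx\,dt$ complete the proof as you wrote.
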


The following result can be obtained by the method of the proof of~\cite[Theorem~7.6]{Musielak}.
\begin{lem}[Density of simple functions, \cite{Musielak}]\label{lem:dens}
Suppose~\eqref{ass:M:int}. Then the set of simple functions integrable on $\OT$ is dense in $L_M(\OT)$ with respect to the modular topology.
\end{lem}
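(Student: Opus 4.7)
The plan is a two-stage approximation. First I would truncate $\xi$ by $\xi^{(k)}:=\xi\mathds{1}_{\{|\xi|\le k\}}$ to reduce to the essentially bounded case, then approximate each bounded $\xi^{(k)}$ by simple functions via the standard measure-theoretic procedure. In both stages the local integrability assumption \eqref{ass:M:int} supplies an integrable dominant, and Lebesgue's dominated convergence theorem delivers modular convergence. To verify the condition in Definition~\ref{def:convmod} i), I would produce, for some fixed $\lambda>0$, simple integrable $\xi_n$ with $\int_\OT M(t,x,(\xi_n-\xi)/\lambda)\,dx\,dt\to 0$.

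For the truncation step, pick $\lambda_0>0$ with $\int_\OT M(t,x,\xi/\lambda_0)\,dx\,dt<\infty$. Because $|\{|\xi|>k\}|\to 0$ as $k\to\infty$ and $M(t,x,\xi/\lambda_0)\in L^1(\OT)$, dominated convergence gives
\[
\int_\OT M\Big(t,x,\frac{\xi-\xi^{(k)}}{\lambda_0}\Big)\,dx\,dt=\int_{\{|\xi|>k\}}M\Big(t,x,\frac{\xi}{\lambda_0}\Big)\,dx\,dt\;\xrightarrow[k\to\infty]{}\;0,
\]
so $\xi^{(k)}\xrightarrow[k\to\infty]{M}\xi$ in $L_M(\OT;\rn)$.

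For the simple approximation step, fix $k$ and partition the closed ball $\{|\eta|\le k\}\subset\rn$ into Borel pieces of diameter less than $1/n$; assigning a representative to each preimage yields simple functions $\xi^{(k)}_n$ with $|\xi^{(k)}_n|\le k$, $|\xi^{(k)}_n-\xi^{(k)}|\le 1/n$ pointwise, and supported on the bounded set $\OT$, hence integrable. To dominate $M(t,x,(\xi^{(k)}_n-\xi^{(k)})/\lambda_0)$, I would split along the standard basis $(e_j)_{j=1}^N$: writing $\eta/\lambda_0=\sum_{j=1}^N (1/N)(N\eta_j/\lambda_0)e_j$, convexity of $M(t,x,\cdot)$ together with one-dimensional evenness gives
\[
M\Big(t,x,\frac{\xi^{(k)}_n-\xi^{(k)}}{\lambda_0}\Big)\le\frac{1}{N}\sum_{j=1}^{N}M\Big(t,x,\frac{2Nk}{\lambda_0}e_j\Big),
\]
and the right-hand side lies in $L^1(\OT)$ by \eqref{ass:M:int}. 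Dominated convergence then yields $\xi^{(k)}_n\xrightarrow[n\to\infty]{M}\xi^{(k)}$; a diagonal extraction, combined with convexity in the form $M\bigl(t,x,(\xi_{n_k}^{(k)}-\xi)/(2\lambda_0)\bigr)\le \tfrac12 M\bigl(t,x,(\xi_{n_k}^{(k)}-\xi^{(k)})/\lambda_0\bigr)+\tfrac12 M\bigl(t,x,(\xi^{(k)}-\xi)/\lambda_0\bigr)$, produces the required modular approximation of $\xi$ by simple integrable functions with the single parameter $\lambda=2\lambda_0$.

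I expect the main technical point to be the dominant in the second step. In the fully anisotropic vector-valued setting a pointwise bound on $|\xi^{(k)}_n-\xi^{(k)}|$ does not immediately translate into a pointwise bound on $M(t,x,\xi^{(k)}_n-\xi^{(k)})$, because $M(t,x,\cdot)$ is not assumed radial. The coordinate decomposition above, combined with the single-point integrability supplied by \eqref{ass:M:int}, is exactly what circumvents this issue; the same hypothesis is what makes the approach of \cite[Theorem~7.6]{Musielak} adapt to the present Musielak--Orlicz setting.
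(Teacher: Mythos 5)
Your proof is correct and follows exactly the route the paper intends: the paper gives no proof of this lemma, only the remark that it ``can be obtained by the method of the proof of \cite[Theorem~7.6]{Musielak}'', which is precisely the truncation--simple-approximation--dominated-convergence scheme you carry out, with \eqref{ass:M:int} supplying the integrable dominant and convexity merging the two stages into a single modular parameter. The one place where the anisotropic, vector-valued setting requires a genuine adaptation of Musielak's argument --- turning a pointwise bound on $|\xi^{(k)}_n-\xi^{(k)}|$ into an integrable majorant of $M(t,x,(\xi^{(k)}_n-\xi^{(k)})/\lambda_0)$ when $M(t,x,\cdot)$ is not radial --- is handled correctly by your coordinate decomposition together with the evenness/convexity monotonicity of $s\mapsto M(t,x,se_j)$ along each axis.
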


\begin{defi}[Biting convergence]\label{def:convbiting}
Let $f_n,f\in  L^1(\OT)$ for every $n\in\N$. We say that a sequence $\{f_n\}_{n=1}^\infty$ converges in the sense of biting to $f$ in~$L^1(\OT)$ (and denote it by $f_n\xrightarrow[]{b}f$), if  there exists a sequence of measurable $E_k$ -- subsets of $\OT$, such that $\lim_{k\to\infty} |E_k|=0$, such that for every $k$ we have $f_n\to f$ in $L^1(\OT\setminus E_k)$.
\end{defi}

To present basic information on the Young measures, let us denote the space of signed Radon measures with finite mass by ${\cal M}(\rn)$.
\begin{theo}[Fundamental theorem on the Young measures]\label{theo:Youngmeas}
Let $U\subset\rn$ and $z_j:U\to\rn$ be a~sequence of measurable functions. Then there exists a~subsequence $\{z_{j,k}\}$ and a~family of weakly-* measurable maps $\nu_x:u\to{\cal M}(\rn)$, such that:
\begin{itemize}
\item $\nu_x\geq 0$, $\|\nu_x\|_{{\cal M}(\rn)}=\int_\rn d\nu_x\leq 1$ for a.e. $x\in U$.
\item For every $f\in C_0(\rn)$, we have $f(z_{j,k})\xrightharpoonup[]{*} \bar{f}$ in $L^\infty(U)$. Moreover,  $\bar{f}(x)=\int_\rn f(\lambda) \, d\nu_x(\lambda)$.
\item Let $K\subset\rn$ be compact. Then $\supp\,\nu_x\subset K,$ if $dist(z_{j,k},K)\to 0$ in measure.
\item $\|\nu_x\|_{{\cal M}(\rn)}=1$ for a.e. $x\in U$ if and only if the tightness condition is satisfied, that is $\lim_{R\to\infty}\sup_k|\{|z_{j,k}|\geq R\}|=0$.
\item If the tightness condition is satisfied, $A\subset U$ is measurable, $f\in C(\rn)$, and $\{f(z_{j,k})\}$ is relatively weakly compact in $L^1 (A)$, then $f(z_{j,k})\xrightharpoonup[]{} \bar{f}$ in $L^1(A)$ and  $\bar{f}(x)=\int_\rn f(\lambda) \, d\nu_x(\lambda)$.
\end{itemize}
The family of maps $\nu_x:U\to {\cal M}(\rn)$ is called the
Young measure generated by the sequence $\{z_{j,k}\}$.
\end{theo}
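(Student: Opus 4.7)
The approach is to realise each measurable map $z_j:U\to\rn$ as a parametrised Dirac mass $\nu^j_x:=\delta_{z_j(x)}$ inside the dual of a separable Banach space, extract a weak-$*$ convergent subsequence by Banach--Alaoglu, and read off the listed properties by testing the resulting convergence against carefully chosen test functions. Concretely, I would set $E:=L^1(U;C_0(\rn))$, identify its dual as $E^{*}=L^{\infty}_{w^{*}}(U;{\cal M}(\rn))$, the space of essentially bounded weakly-$*$ measurable parametrised measures, paired via $\langle\nu,\Phi\rangle=\int_{U}\int_{\rn}\Phi(x,\lambda)\,d\nu_{x}(\lambda)\,dx$. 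Separability of $C_{0}(\rn)$ together with $\sigma$-finiteness of $U\subset\rn$ makes $E$ separable. Since $\|\nu^{j}_{x}\|_{{\cal M}(\rn)}=1$ a.e., the functionals $\nu^{j}$ sit in the closed unit ball of $E^{*}$, so a subsequence $\nu^{j_{k}}\xrightharpoonup{*}\nu$ exists in $E^{*}$, and $\nu_x$ is the candidate Young measure.

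The first bullet (nonnegativity and $\|\nu_{x}\|\leq 1$) follows by passing to the weak-$*$ limit against tensor products $\psi(x)f(\lambda)$ with $\psi\in L^{1}(U)$ and $f\in C_{0}(\rn)$ nonnegative, since both properties hold for each $\nu^{j_{k}}$. The second bullet is exactly a restatement of the extracted convergence tested on $\psi\otimes f$: it yields $f(z_{j_{k}})\xrightharpoonup{*}\bar{f}$ in $L^{\infty}(U)$ with $\bar{f}(x)=\int_{\rn}f(\lambda)\,d\nu_{x}(\lambda)$. For the support statement, one observes that whenever $f\in C_{0}(\rn)$ vanishes on an open neighbourhood of the compact set $K$, the hypothesis $\mathrm{dist}(z_{j_{k}},K)\to 0$ in measure forces $f(z_{j_{k}})\to 0$ in measure and hence weakly-$*$ in $L^{\infty}$ by dominated convergence on sets of finite measure; so $\bar{f}=0$ a.e., and separability of $C_{0}(\rn\setminus K)$ upgrades this to $\mathrm{supp}\,\nu_{x}\subset K$ for a.e.\ $x$.

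For the tightness characterisation I would take cut-offs $\chi_{R}\in C_{c}(\rn)$ with $0\leq\chi_{R}\leq 1$, $\chi_{R}\equiv 1$ on $B_{R/2}$, and $\mathrm{supp}\,\chi_{R}\subset B_{R}$. On any $A\subset U$ of finite measure, weak-$*$ convergence against $1_{A}\otimes\chi_{R}$ together with the elementary bound $\int_{A}\chi_{R}(z_{j_{k}})\,dx\geq|A|-\sup_{k}|\{|z_{j_{k}}|>R/2\}|$ gives, after sending $k\to\infty$ and then $R\to\infty$ (using the monotone convergence $\langle\nu_{x},\chi_{R}\rangle\uparrow\|\nu_{x}\|_{{\cal M}(\rn)}$), the implication tightness $\Rightarrow\|\nu_{x}\|=1$ a.e. The converse is obtained by running the same inequality in the opposite direction: if $\|\nu_{x}\|=1$ a.e., then $\int_{A}(1-\chi_{R}(z_{j_{k}}))\,dx$ is pointwise-in-$R$ small in the limit $k\to\infty$, and a diagonal subsequence extraction promotes this to the uniformity in $k$ required by the tightness condition.

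Finally, to extend the limit from $C_{0}(\rn)$ to $C(\rn)$ under tightness and relative weak $L^{1}(A)$-compactness of $\{f(z_{j_{k}})\}$, split $f=f\chi_{R}+f(1-\chi_{R})$. The compactly supported piece $f\chi_{R}\in C_{0}(\rn)$ already gives $(f\chi_{R})(z_{j_{k}})\xrightharpoonup{*}\langle\nu_{x},f\chi_{R}\rangle$ in $L^{\infty}(A)$ and hence weakly in $L^{1}(A)$; the tail $f(1-\chi_{R})(z_{j_{k}})$ is supported on $\{|z_{j_{k}}|>R/2\}$, whose measure tends to $0$ uniformly in $k$ by tightness, and the Dunford--Pettis equi-integrability of $\{f(z_{j_{k}})\}$ coming from the weak-compactness hypothesis turns this into uniform $L^{1}(A)$-smallness as $R\to\infty$; monotone convergence $\langle\nu_{x},f\chi_{R}\rangle\to\langle\nu_{x},f\rangle$ closes the argument. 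I expect the main obstacle to be the clean identification of the predual $E=L^{1}(U;C_{0}(\rn))$ with its representation $E^{*}=L^{\infty}_{w^{*}}(U;{\cal M}(\rn))$ (a genuinely measure-theoretic statement requiring separability and measurable selection), together with the converse direction of the tightness equivalence where pointwise-in-$R$ bounds must be promoted to uniformity across the whole subsequence.
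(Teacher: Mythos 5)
The paper itself does not prove Theorem~\ref{theo:Youngmeas}: it is listed in the Appendix as a classical background fact (the fundamental theorem on Young measures in the form of Ball/M\"uller/Pedregal), so there is no in-paper argument to compare with; your proposal follows exactly the standard literature route (Dirac masses $\delta_{z_j(x)}$ viewed in the unit ball of $\bigl(L^1(U;C_0(\rn))\bigr)^*\cong L^\infty_{w^*}(U;{\cal M}(\rn))$, Banach--Alaoglu with a separable predual, then testing on tensors $\psi\otimes f$), and the first three bullets and the last bullet are handled essentially correctly, modulo the duality identification you yourself flag as the technical core.

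There is, however, one step that as written does not prove what is claimed: the converse of the tightness equivalence. You deduce from $\|\nu_x\|_{{\cal M}(\rn)}=1$ a.e.\ only that $\limsup_{k\to\infty}|\{|z_{j,k}|\geq R\}|$ is small for large $R$, and then invoke ``a diagonal subsequence extraction'' to get uniformity in $k$. Passing to a further subsequence is not admissible here: the theorem asserts the equivalence for the already extracted sequence $\{z_{j,k}\}$, and tightness of a sub-subsequence does not yield tightness of $\{z_{j,k}\}$ itself. The correct (and simpler) completion needs no extraction: given $\ve>0$ choose $R_0$ so that the $\limsup$ bound gives $|\{|z_{j,k}|\geq R_0\}|<\ve$ for all $k\geq k_0$, and handle the finitely many indices $k<k_0$ by noting that each single measurable, a.e.\ finite function on a set of \emph{finite} measure is individually tight; then take the maximum of the resulting radii. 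This also exposes a hypothesis your argument (and the statement as transcribed, which only says $U\subset\rn$) uses implicitly: the ``only if'' direction is false when $|U|=\infty$ (take $z_j(x)=x$ on $U=\rn$, so $\nu_x=\delta_x$ has full mass while $|\{|z_j|\geq R\}|=\infty$ for every $R$), and the passage from weak-$*$ convergence in $L^\infty$ to weak convergence in $L^1(A)$ in the last bullet likewise uses $|A|<\infty$. In the paper's application $U=\OT$ is bounded, so these are harmless, but they should be stated; with the finite-measure assumption made explicit and the diagonal-extraction step replaced as above, your proof is complete.
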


\begin{theo}[The Chacon Biting Lemma, cf. Theorem~6.6 in \cite{pedr}]\label{theo:bitinglemma1}Let the sequence $\{f_n\}_n$ be uniformly bounded in $L^1(\OT)$. Then there exists $f\in L^1(\OT)$, such that $f_n\xrightarrow[]{b}f$.
\end{theo}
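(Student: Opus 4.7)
The plan is to reduce biting convergence to the Dunford--Pettis theorem, applied on the complements of a nested family of sets of vanishing measure. The obstruction to weak $L^1$-compactness of a bounded sequence is the failure of uniform integrability, and the biting lemma localizes this failure on sets $E_k$ with $|E_k|\to 0$ that absorb the escaping mass.

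First I would pass to a subsequence, not relabeled, for which the positive measures $\mu_n:=|f_n|\,dx\,dt$ converge weakly-$*$ in $\mathcal{M}(\overline{\OT})$ to a finite positive Radon measure $\mu$; this uses the $L^1$-bound together with Banach--Alaoglu applied to $\mathcal{M}(\overline{\OT})=(C(\overline{\OT}))^{*}$. I would then perform the Lebesgue decomposition $\mu=g\,dx\,dt+\mu_s$, where $g\in L^1(\OT)$ and $\mu_s$ is concentrated on a Borel set $S\subset\overline{\OT}$ of vanishing Lebesgue measure. Outer regularity provides open sets $E_k\supset S$ with $|E_k|<1/k$, which I would arrange to be nested by replacing $E_k$ with $\bigcap_{j\leq k}E_j$.

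The crucial step is establishing uniform integrability of $\{f_n\}$ on each $\OT\setminus E_k$. If it failed, there would exist $\varepsilon_0>0$, a subsequence $n_j$, and Borel sets $F_j\subset \OT\setminus E_k$ with $|F_j|\to 0$ yet $\int_{F_j}|f_{n_j}|\,dx\,dt\geq\varepsilon_0$; a further weak-$*$ limit $\nu$ of the measures $\mathds{1}_{F_j}|f_{n_j}|\,dx\,dt$ would satisfy $\nu\leq\mu$, have total mass at least $\varepsilon_0$, and, upon slightly enlarging each $F_j$ to an open set of comparable measure and considering the tails $G_n=\bigcup_{j\geq n}F_j$, be supported on a Lebesgue-null subset of $\overline{\OT}\setminus E_k$. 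This would contradict the inclusion $\mathrm{supp}\,\mu_s\subset S\subset E_k$. Once uniform integrability holds on every $\OT\setminus E_k$, the Dunford--Pettis theorem delivers a further subsequence weakly convergent in $L^1(\OT\setminus E_k)$; a diagonal extraction in $k$ combined with consistency of the restricted limits on the nested complements identifies a single $f\in L^1(\OT)$, extended arbitrarily on the null set $\bigcap_k E_k$, for which $f_{n_j}\rightharpoonup f$ weakly in $L^1(\OT\setminus E_k)$ for every $k$, i.e. $f_n\xrightarrow[]{b}f$.

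The hard part is making the uniform integrability argument on $\OT\setminus E_k$ fully rigorous, because weak-$*$ convergence of measures only tests against bounded continuous functions whereas uniform integrability concerns unbounded truncations. I would handle this by a split at level $R$: Chebyshev gives $|\{|f_n|>R\}|\leq M/R$ with $M=\sup_n\|f_n\|_{L^1}$, so $f_n\mathds{1}_{\{|f_n|\leq R\}}$ is bounded in $L^\infty$ and automatically equi-integrable, while the unbounded tail $f_n\mathds{1}_{\{|f_n|>R\}}$ lives on sets whose Lebesgue measure tends to zero as $R\to\infty$, and can therefore contribute only to the singular part of $\mu$, which is trapped inside $E_k$ by construction. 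Letting $R\to\infty$ closes the estimate uniformly in $n$ and completes the reduction.
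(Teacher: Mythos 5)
Your strategy---localizing the failure of uniform integrability inside neighbourhoods of the singular support of the weak-$*$ limit of $|f_n|\,dx\,dt$---does not work; note also that the paper offers no proof of its own here, citing Theorem~6.6 of Pedregal's book. The crucial step is false: uniform integrability of $\{f_n\}$ on $\OT\setminus E_k$ cannot be read off from the Lebesgue decomposition of $\mu$, because mass can escape vertically while remaining spatially diffuse, in which case it feeds the \emph{absolutely continuous} part of $\mu$ rather than $\mu_s$. Concretely, take $|\OT|=1$ and $f_n=n\mathds{1}_{A_n}$, where $A_n$ is a union of $n^2$ disjoint balls of measure $n^{-3}$ equidistributed over $\OT$. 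Then $\|f_n\|_{L^1}=1$ and $|f_n|\,dx\,dt\xrightharpoonup{*}dx\,dt$, so $\mu_s=0$, $S=\emptyset$ and your $E_k$ may be taken empty; yet $\int_{\{f_n>R\}}f_n\,dx\,dt=1$ for every $n>R$, and since $n|A_n\setminus E|\to 1-|E|$ for any fixed open $E$ of small measure, the sequence is not uniformly integrable on the complement of any such set. The correct bitten sets here are $E_k=\bigcup_{j\ge k}A_{n_j}$ along a sparse subsequence, and the biting limit is $f\equiv 0$, not the density $g\equiv 1$ of $\mu$---so the identification of the limit function in your scheme is also wrong. (This example shows in addition that a subsequence extraction is genuinely needed, which the statement suppresses.)

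The precise point where your contradiction argument breaks is the claim that the weak-$*$ limit $\nu$ of $\mathds{1}_{F_j}|f_{n_j}|\,dx\,dt$ is supported on a Lebesgue-null set. Weak-$*$ convergence of positive measures gives $\nu(K)\ge\limsup_j\nu_j(K)$ only for \emph{closed} $K$, so all you can conclude is $\nu\bigl(\overline{G_n}\bigr)\ge\varepsilon_0$ with $G_n=\bigcup_{j\ge n}F_j$; the closure $\overline{G_n}$ can have full Lebesgue measure even when $|G_n|\to0$ (in the example above it is all of $\overline{\OT}$), so no contradiction with $\mathrm{supp}\,\mu_s\subset E_k$ arises. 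A working proof must bite in the vertical direction: extract a subsequence along which $\beta_R:=\lim_j\int_{\{|f_{n_j}|>R\}}|f_{n_j}|\,dx\,dt$ exists for each $R$ and decreases to its infimum $\beta$, choose levels $R_j\uparrow\infty$ together with the subsequence so that the escaping mass $\int_{\{|f_{n_j}|>R_j\}}|f_{n_j}|\,dx\,dt$ realizes $\beta$, and set $E_k:=\bigcup_{j\ge k}\{|f_{n_j}|>R_j\}$, whose measure is controlled by Chebyshev. Then $\{f_{n_j}\mathds{1}_{\OT\setminus E_k}\}_j$ is uniformly integrable for each $k$, and from that point on your Dunford--Pettis plus diagonal-and-consistency argument applies verbatim. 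Alternatively one can pass through the Young measure generated by $\{f_{n_j}\}$ and take $f(t,x)=\int\lambda\,d\nu_{t,x}(\lambda)$, which is the route consistent with how the lemma is combined with Theorem~\ref{theo:Youngmeas} in Section~\ref{sec:main proof}.
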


The consequence of the above result is the following, cf.~\cite[Lemma~6.9]{pedr}.

\begin{theo}\label{theo:bitinglemma}Let $f_n\in   L^1(\OT)$ for every $n\in\N$,   $f_n\geq 0$ for every $n\in\N$ and a.e. in $\OT$. Moreover, suppose $f_n\xrightarrow[]{b}f$ and $\limsup_{n\to\infty}\int_\OT f_n \,dx\,dt\leq \int_\Omega f \,dx\,dt.$ Then  $f_n\xrightharpoonup{}f$ in $L^1(\OT)$ for $n\to\infty$.
\end{theo}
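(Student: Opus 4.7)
The plan is to test weakly against $\phi \in L^\infty(\OT)$, split each integral across the biting sets $\{E_k\}$, handle the complement by the strong $L^1$ convergence provided by biting convergence, and use the mass hypothesis $\limsup_n\int f_n \leq \int f$ together with $f_n \geq 0$ to kill the contribution on $E_k$.

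As a preliminary, I would upgrade $\limsup_n \int_\OT f_n \leq \int_\OT f$ to the equality $\lim_n \int_\OT f_n = \int_\OT f$. The strong $L^1$ convergence on $\OT \setminus E_k$ gives $\lim_n\int_{\OT\setminus E_k} f_n = \int_{\OT\setminus E_k} f$; since $f_n \geq 0$, this yields $\liminf_n \int_\OT f_n \geq \int_{\OT\setminus E_k} f$, and letting $k\to\infty$ with absolute continuity of the integral of $f$ (since $f\in L^1(\OT)$ and $|E_k|\to 0$) produces the matching lower bound $\liminf_n \int_\OT f_n \geq \int_\OT f$. A useful consequence, obtained by subtracting the convergence on $\OT\setminus E_k$, is $\lim_n \int_{E_k} f_n = \int_{E_k} f$ for every fixed $k$.

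For the main step, given $\phi \in L^\infty(\OT)$ I would decompose
\[
\int_\OT(f_n-f)\phi\,dx\,dt = \int_{\OT\setminus E_k}(f_n-f)\phi\,dx\,dt + \int_{E_k}(f_n-f)\phi\,dx\,dt.
\]
The first summand is bounded in absolute value by $\|\phi\|_\infty \int_{\OT\setminus E_k}|f_n-f|\,dx\,dt$, which tends to $0$ as $n\to\infty$ by biting convergence. The second is controlled by $\|\phi\|_\infty\bigl(\int_{E_k} f_n + \int_{E_k} f\bigr)$, whose $\limsup_n$ equals $2\|\phi\|_\infty \int_{E_k} f$ by the preliminary step. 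Sending $k\to\infty$ and invoking $\int_{E_k} f \to 0$ then finishes the proof, yielding $f_n\rightharpoonup f$ weakly in $L^1(\OT)$.

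The only delicate point is the ordering of limits: one fixes $k$, sends $n\to\infty$ first (to kill the complement piece and to turn $\int_{E_k}f_n$ into $\int_{E_k}f$), and only afterwards lets $k\to\infty$. The sign condition $f_n\geq 0$ is essential here: without it, the strong $L^1$ convergence on $\OT\setminus E_k$ alone would not control the mass on $E_k$, and it is precisely the no-concentration assumption $\limsup_n \int f_n \leq \int f$ that rules out escape of mass into the vanishing sets $E_k$.
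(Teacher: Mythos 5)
Your proof is correct. The paper does not actually prove this statement---it is quoted as a known consequence of the Chacon Biting Lemma with a reference to Pedregal's Lemma~6.9---and your argument is a clean, self-contained version of the standard proof behind that citation: upgrading the mass inequality to $\lim_n\int_{\OT}f_n=\int_{\OT}f$ via nonnegativity and absolute continuity, then splitting the test integral over $E_k$ and its complement with the limits taken in the order $n\to\infty$ first, $k\to\infty$ second. The only cosmetic remark is that where you write $\int_{E_k}f$ for the bound on the exceptional set you are implicitly using $f\geq 0$ a.e.\ (which does follow, since $f$ is an $L^1$-limit of nonnegative functions on each $\OT\setminus E_k$ and $|\bigcap_k E_k|=0$); alternatively one can simply bound by $\int_{E_k}|f|$ and invoke absolute continuity, with no change to the argument.
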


\begin{theo}[The Vitali Convergence Theorem]\label{theo:VitConv} Let $(X,\mu)$ be a positive measure space, $\mu(X)<\infty $, and $1\leq p<\infty$. If $\{f_{n}\}$ is uniformly integrable in $L^p_\mu$,   $f_{n}(x)\to f(x)$ in measure  and $|f(x)|<\infty $  a.e. in $X$, then  $f\in  {L}^p_\mu(X)$
and  $f_{n}(x)\to f(x)$ in  ${L}^p_\mu(X)$.
\end{theo}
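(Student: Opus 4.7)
The plan is to proceed in two stages: first show that the candidate limit $f$ lies in $L^p_\mu(X)$, then show $f_n \to f$ in $L^p_\mu$ by splitting the integral into a small-difference part (controlled via convergence in measure on the finite-measure space) and a large-difference part (controlled via uniform integrability on a small-measure set).

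First I would establish $f \in L^p_\mu(X)$. The hypothesis of uniform integrability in $L^p_\mu$ (meaning $\sup_n \int_{\{|f_n|\geq R\}}|f_n|^p\,d\mu \to 0$ as $R\to\infty$) together with $\mu(X)<\infty$ yields $\sup_n \|f_n\|_{L^p_\mu}<\infty$, since for $R$ large enough the tail integral is below $1$ while $\int_{\{|f_n|<R\}}|f_n|^p\,d\mu \leq R^p \mu(X)$. Because $f_n \to f$ in measure and $\mu(X)<\infty$, one extracts a subsequence $f_{n_k}\to f$ a.e., and Fatou's lemma then gives $\int_X |f|^p\,d\mu \leq \liminf_k \int_X |f_{n_k}|^p\,d\mu < \infty$, so $f \in L^p_\mu(X)$.

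Next I would prove the convergence. Fix $\varepsilon > 0$. Using uniform integrability together with finiteness of $\mu$, I would derive an absolute-continuity statement: there exists $\delta>0$ such that for every measurable $E\subset X$ with $\mu(E)<\delta$ and every $n$,
\[
\int_E |f_n|^p\,d\mu < \varepsilon,
\]
by splitting $E$ into $E\cap\{|f_n|\geq R\}$ (tail small by uniform integrability) and $E\cap\{|f_n|<R\}$ (bounded, so controlled by $R^p\mu(E)$). The same absolute continuity holds for $f$ since $f\in L^p_\mu$. Now fix $\eta>0$ and set $E_n:=\{x:|f_n(x)-f(x)|\geq \eta\}$. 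By convergence in measure, $\mu(E_n)<\delta$ for all $n$ sufficiently large, and I would split
\[
\int_X |f_n-f|^p\,d\mu = \int_{X\setminus E_n} |f_n-f|^p\,d\mu + \int_{E_n} |f_n-f|^p\,d\mu \leq \eta^p \mu(X) + 2^{p-1}\!\int_{E_n}(|f_n|^p+|f|^p)\,d\mu \leq \eta^p\mu(X) + 2^p\varepsilon.
\]
Choosing $\eta$ so that $\eta^p\mu(X)<\varepsilon$ yields $\limsup_n \|f_n-f\|_{L^p_\mu}^p \leq (1+2^p)\varepsilon$, and since $\varepsilon$ is arbitrary, $f_n\to f$ in $L^p_\mu(X)$.

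The main obstacle — minor but the real content of the argument — is the absolute-continuity step: one must check that the uniform-in-$n$ tail estimate built into the definition of uniform integrability actually upgrades, on a finite measure space, to uniform absolute continuity of the measures $E\mapsto \int_E |f_n|^p\,d\mu$. Once this is in hand, combining it with convergence in measure is routine via the $\{|f_n-f|\geq\eta\}$ decomposition; the finiteness $\mu(X)<\infty$ is crucial both for turning convergence in measure into the smallness of $\mu(E_n)$ and for the $\eta^p\mu(X)$ bound on the good set. The hypothesis $|f|<\infty$ a.e.\ is used implicitly to ensure $f$ is a well-defined measurable function on $X$ so that the in-measure convergence and the pointwise differences make sense.
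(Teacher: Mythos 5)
Your proof is correct: the uniform bound plus Fatou gives $f\in L^p_\mu$, the tail-splitting upgrade of uniform integrability to uniform absolute continuity is carried out properly, and the decomposition along $E_n=\{|f_n-f|\geq\eta\}$ with the $2^{p-1}$ convexity inequality closes the argument with the quantifiers in the right order. Note that the paper itself states this Vitali theorem in the Appendix as a classical background fact without proof, so there is no internal argument to compare with; your write-up is the standard proof of exactly the statement being used.
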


\section{References}
\bibliographystyle{plain}
\bibliography{arXiv-Para-t}
\end{document}